\def\ZZ{{\mathbb Z}}
\newtheorem{formula}{}[section]
\newtheorem{proposition}[formula]{Proposition}
\newtheorem{definition}[formula]{\indent Definition}
\newtheorem{corollary}[formula]{\indent Corollary}
\newtheorem{remark}[formula]{\indent Remark}
\newtheorem{lemma}[formula]{\indent Lemma}
\newtheorem{theorem}[formula]{\indent Theorem}
\newtheorem{example}[formula]{Example}
\newtheorem{convention}[formula]{Convention}
\def\thrm{\begin{theorem}}
\def\thrml#1{\begin{theorem}\label{#1}}
\def\ethrm{\end{theorem}}
\def\rmrk{\begin{remark}}
\def\rmrkl#1{\begin{remark}\label{#1}}
\def\ermrk{\end{remark}}
\def\dfntn{\begin{definition}}
\def\dfntnl#1{\begin{definition}\label{#1}}
\def\edfntn{\end{definition}}
\def\nmrt{\begin{enumerate}}
\def\enmrt{\end{enumerate}}
\def\qtn{\begin{equation}}
\def\qtnl#1{\begin{equation}\label{#1}}
\def\eqtn{\end{equation}}
\def\lmm{\begin{lemma}}
\def\lmml#1{\begin{lemma}\label{#1}}
\def\elmm{\end{lemma}}
\def\crllr{\begin{corollary}}
\def\crllrl#1{\begin{corollary}\label{#1}}
\def\ecrllr{\end{corollary}}
\newcommand{\Sol}{\operatorname{Sol}}
\title{Minimal solutions of tropical linear differential systems}
\author{Dima Grigoriev \and Cristhian Garay L\'opez}
\date{ \today}
\begin{document}


\begin{abstract}
We introduce and study minimal (with respect to inclusion) solutions of systems of tropical linear differential equations. We describe the set of all minimal solutions for a single equation. It is shown that any tropical linear differential equation in a single unknown has either a solution or a solution at infinity. For a generic system of $n$ tropical linear differential equations in $n$ unknowns, upper and lower bounds on the number of minimal solutions are established. The upper bound involves inversions of a family of permutations which generalize inversions of a single permutation.  For $n=1, 2$, we show that the bounds are sharp.  
\end{abstract}
\maketitle
{\bf keywords}: tropical linear differential equations, minimal solutions, inversion of a family of permutations \vspace{1mm}

{\bf AMS classification}: 14T10

\section{Introduction}
We recall \cite{G17} that a tropical ordinary linear differential equation (abbr. TLDE) of order $k\in \ZZ_{> 0}$ is an expression of the form

\begin{eqnarray}\label{1}
P:=\min_{0\leq i\leq k} \{a_i+u^{(i)}\}
\end{eqnarray}
where $a_i\in \ZZ$. For a set $S\subset \ZZ_{\ge 0}$ define a valuation 
$$val_S(i):= \min \{s-i\\ |\ i\le s\in S\},$$ 
provided that  such min exists, and $val_S(i)=\infty$ otherwise.
\noindent We say that $S$ is a {\it tropical solution} of (\ref{1}) if the minimum in $\min_{0\le i\le k} \{a_i+ val_S(i)\}$ is attained at least twice.

If a classical ordinary linear differential equation (abbr. LDE)
\begin{equation}
\label{CLDE}
 \sum_{0\le i\le k} {\mathcal A}_i \frac{d^i w}{dt^i}=0   
\end{equation}
\noindent   with Laurent series coefficients ${\mathcal A}_i=t^{a_i} \sum_{j\ge 0} {\mathcal A}_{ij} t^j,\ {\mathcal A}_{i0}\neq 0$ has a power series $w$ as its solution, then the support $S=supp(w)\subset \ZZ_{\ge 0}$ is a tropical solution of (\ref{1}). In the sequel, talking about tropical  solutions  of (\ref{1}), we omit the adjective ``tropical'' for brevity.

We call a solution $S$ of (\ref{1}) {\it minimal} when $S$ is minimal among solutions with respect to inclusion. We say that the equation (\ref{1}) is {\it holonomic} if it has just a finite number of minimal solutions. 

Note that a system of (partial) LDE is called holonomic if its space of solutions has a finite dimension. In particular, an ordinary LDE is always holonomic, which is not necessary the case for its tropical counterpart, as we show  in Example~\ref{non-holonomic}. 

  Similarly, a TLDE in $n\geq1$ unknowns is an expression 

\begin{equation}
    \label{22_P}
    P=\bigoplus_{\substack{1\leq j\leq n\\
0\leq i\leq k_j}}a_{i,j}\odot u_{j}^{(i)}=\min_{\substack{1\leq j\leq n\\
0\leq i\leq k_j}}\{{a_{i,j}}+u_{j}^{(i)}\}, \text{ where }a_{i,j}\in \mathbb{Z}.
\end{equation}

A tuple $(S_1,\ldots,S_n)\in \mathcal{P}(\mathbb{Z}_{\geq0})^n$ is a solution of \eqref{22_P} if and only if the value $\min_{\substack{1\leq j\leq n\\
0\leq i\leq k_j}}\{{a_{i,j}}+\text{val}_{S_j}(i)\}$ is attained at least twice. We call a solution $(S_1,\ldots,S_n)$ of (\ref{22_P}) {\it minimal} when $S_j$ is minimal among solutions with respect to inclusion for all $j\in[n]$. 

 If a classical ordinary linear differential equation
\begin{equation}
\label{CLDE_n>1}
 \sum_{\substack{1\leq j\leq n\\
0\leq i\leq k_j}} {\mathcal A}_{i,j} \frac{d^i w_j}{dt^i}=0   
\end{equation}
\noindent   with coefficients $t^{a_{ij}} \sum_{k\ge 0} {\mathcal A}_{ijk} t^k={\mathcal A}_{i,j}\in  {K}[\![t]\!]$ (where ${\mathcal A}_{ij0}\neq 0$, $K$ a field  of characteristic zero) has a power series $w=(w_1,\ldots,w_n)$ as its solution, then the support $(supp(w_1),\ldots,supp(w_n))$ is a  solution of (\ref{22_P}).

Consider now the ring of formal power series in $s\geq1$ variables ${K}[\![t_1,\ldots,t_s]\!]$. Let $Sol(\Sigma)\subset {K}[\![t_1,\ldots,t_s]\!]^n$ be the set of classical formal power series solutions  of an homogeneous linear system $\Sigma$ of LDE in $n\geq1$ unknowns $u_1,\ldots,u_n$ with coefficients in ${K}[\![t_1,\ldots,t_s]\!]$.

In \cite{ABFG}, the authors  show that under certain circumstances ($\Sigma$ has to be of differential type zero and the cardinality of $K$ has to be large enough) then the {\it tropicalization} $trop(Sol(\Sigma))\subset\mathcal{P}(\mathbb{Z}_{\geq0}^s)^n$ of $Sol(\Sigma)$ (i.e. its  whole family of supports) is a $\mathbb{B}$-semimodule that has the structure of an (infinite) matroid.

The result from \cite{ABFG} can be presented as the next step in developing the theory of tropical differential algebraic geometry, right after the so-called fundamental theorems of tropical differential algebraic geometry (both the ordinary \cite{Fundamental_theorem_TropDiffAlgGeom} and the partial version \cite{falkensteiner2020fundamental}), which  offer a connection between this  {\it tropicalization} $trop(Sol(\Sigma))$, and the set of  solutions of certain systems of TLDEs of the form \eqref{22_P}.  

Also in \cite{ABFG}, the authors say that from a combinatorial perspective, it makes sense to study the structure of $Sol(\Sigma)\subset\mathcal{P}(\mathbb{Z}_{\geq0}^s)^n$ associated to tropical linear systems $\Sigma$ consisting of TLDEs of the form \eqref{22_P}.  

In this paper we study this problem for the case $s=1$.  If $\Sigma$ is such a system, then the set $\mu(Sol(\Sigma))$ of minimal  solutions of a system $\Sigma$ {\it generates} (from an order-theoretic perspective) the whole set $Sol(\Sigma)$ of  solutions of $\Sigma$, see Remark \ref{rem:generation}. We show that if $n>1$, then the set $\mu(Sol(P))$ of tropical solutions of a single TLDE   contains in a natural way certain points of a  matroid, not infinite, but valuated in this case (Corollary~\ref{eq:circuits_n>1}).

\subsection{Results}
We fix $\mathbf{k}=(k_1,\ldots,k_n)\in \mathbb{Z}_{>0}^n$ and we consider $P=P_1(u_1)\oplus\cdots \oplus P_n(u_n)$ a TLDE
as in \eqref{22_P}, such that the differential order of $P_j(u_j)$ is $k_j\in\mathbb{Z}_{>0}$ for $j\in[n]$.

The first tool in our analysis is the following decomposition of the set $\mu(\Sol(P))$ of non-zero minimal (with respect to inclusion) solutions of $P$ introduced in Definition \ref{eq:decompo} 
\begin{equation}
\label{eq:intro_desc}
    \mu(\Sol(P))=F(P)\sqcup\mathcal{C}_{\mathbb{Z}_{\geq0}}(P),
\end{equation} where $F(P)$ is a  finite set, and $\mathcal{C}_{\mathbb{Z}_{\geq0}}(P)$ is the set of minimal solutions of $P$ having order at least $\mathbf{k}$.


The second tool in our analysis is a family of tropical linear (homogeneous) polynomials $\{A_{\alpha,j},\:j\in[n],\:0\leq \alpha\}$ introduced in Definition \ref{def:linpols_n>1}:
\begin{equation}
\label{eq:intro_polynomials}
    A_{\alpha,j}=\bigoplus_{i=0}^{k_j}val_{t_j^\alpha}(i)\odot x_{i,j}\in \mathbb{T}[x_{i,j}\::\:1\leq j\leq n,\:0\leq i\leq k_j, \alpha].
\end{equation}

Our main results for the first part are complete descriptions of $\mu(\Sol(P))$. If $n>1$, the expression $P(u_1,\ldots,u_n)=P_1(u_1)\oplus\cdots \oplus P_n(u_n)$ tells us that $\bigcup_j\mu(\Sol(P_j))\subset \mu(\Sol(P))$, so we see that the case of $n>1$  depends on the case $n=1$. \vspace{1mm}


{\bf Theorem (Theorem \ref{thm:s_n=1} in the text). } 
Consider $P=P(u)$ as in (\ref{1}). Denote by $V(A_{\alpha})\subset \ZZ^{k+1}$ the tropical hyperplane defined by  the tropical polynomial $A_{\alpha}:=\min_{0\le i\le k, \alpha} \{a_i+\alpha-i\},\ 0\le \alpha$, cf. (\ref{eq:intro_polynomials}).   Then 
\begin{enumerate}[label=\roman*)] 
    \item The set $\mathcal{C}_{\mathbb{Z}_{\geq0}}(P)$ satisfies \begin{equation*}
    \mathcal{C}_{\mathbb{Z}_{\geq0}}(P)=\begin{cases}
        \emptyset, &\text{ if }P\notin V(A_k),\\
        \mathbb{Z}_{\geq0}\odot\{k\}=\{\{k+i\}\::\:i\in \mathbb{Z}_{\geq0}\},&\text{ otherwise}.\\
    \end{cases}
\end{equation*} 

    \item The set $F(P)$ is finite, and it consists of polynomials $S=t^p+t^q$, with $0\leq p\leq q$ satisfying
    \begin{enumerate}
        \item $0\leq p\leq q< k$, or 
        \item $0\leq p< k\leq q$ for $q=q(p)$ unique (only when $P\notin V(A_k)$).
    \end{enumerate}
\end{enumerate} \vspace{1mm}

{\bf Theorem (Theorem \ref{thm:s_n>1}  in the text). } Consider $P=\bigoplus_{j=1}^nP_j(u_j)$ for $n>1$.
 The set $\mathcal{C}_{\mathbb{Z}_{\geq0}}(P)$  satisfies \begin{equation*}
        \mathcal{C}_{\mathbb{Z}_{\geq0}}(P)=\bigcup_{P\in V(A_{k_j,j})}\mathbb{Z}_{\geq0}\odot t_j^{k_j}\cup\bigcup_{A_{k_b,b}(P)\leq A_{k_a,a}(P)}\mathbb{Z}_{\geq0}\odot(t_a^{k_a}+t_b^{k_b+A_{k_a,a}(P)-A_{k_b,b}(P)}).
    \end{equation*}
    The set $F(P)$ is finite. Furthermore, if $S\in F(P)$, then either 
\begin{enumerate}
\item $S\in \bigcup_jF(P_j)$, or
\item  $S=t_i^p+t_j^q$ satisfies
\begin{enumerate}
    \item $(p,q)<(k_i,k_j)$,
    \item if $p<k_i$ and $q=q(p)\geq k_j$ is unique (or  $q< k_j$ and $p=p(q)\geq k_i$ is unique).
\end{enumerate}
\end{enumerate}

In particular, $\mathcal{C}_{\mathbb{Z}_{\geq0}}(P)$ is contained in a valuated matroid $\mathcal{C}_{\mathbb{R}}(P)\subseteq \mathbb{T}^n$.

We also use the decomposition \eqref{eq:intro_desc} and the polynomials \eqref{eq:intro_polynomials} to give a definition of regularity for the case $n=1$ in Definition \ref{def_reg_n=1}, and  for  $n>1$  in  Definition \ref{def_reg_n>1}. 

Afterwards we consider finite systems $\Sigma$  of $n\geq1$ TLDEs in $n$ unknowns of the form 

\begin{equation}\label{4}
P_l=\bigoplus_{1\le j\le n,\ 0\le i\le k_{j}}a_{ijl}\odot u_j^{(i)}=\min_{1\le j\le n,\ 0\le i\le k_{j}} \{a_{ijl}+u_j^{(i)}\},\ 1\le l\le n.
\end{equation}

We say that $\Sigma$ is holonomic if it has a finite number of minimal solutions. We deduce (Proposition~\ref{p:holonomic}) that the system $\Sigma=\{P\}$ is holonomic if and only if $P\notin V(A_k)$. 

 Our main results for the second part include upper and lower bounds on the number of minimal solutions for a generic  system \eqref{4}. The upper bound involves inversions of a family of permutations which generalize inversions of a single permutation. We show that the bounds are sharp for $n=1, 2$. For $n=1$ the maximal number of minimal solutions of a generic regular TLDE (\ref{1}) equals $k$ (Theorem~\ref{complete}), and for $n=2$, we give the following result.

{\bf Corollary} (Corollary~\ref{gap}  in the text). The maximal number of minimal solutions of a generic regular system (\ref{4}) for $n=2$ equals $\frac{(k_1+k_2)(k_1+k_2+1)}{2}$.

For generic regular systems of $n>2$ TLDEs of the type (\ref{4}) we provide lower and upper bounds on the  number of minimal solutions (unlike the case $n=2$ there is a gap between obtained lower and upper bounds, and 'it would be interesting to diminish this gap).

\subsection{Roadmap}
Section 2  is about preliminaries. After that the paper can be divided in two parts  as follows. The first part consists of section  \ref{sect:msstlde}, and it describes the structure of the set $\mu(\Sol(P))$ as well as the definition of regularity for TLDEs.

The second part consists of sections 4-6. In section~\ref{four} we start our study of systems of TLDEs \eqref{4} by introducing our definition of generic regular system (cf. Definition~\ref{regular}); after that, we study carefully the bounds for the case $n=2$.

The remaining sections are about providing bounds for generic regular systems of $n>2$ TLDE of the type (\ref{4}): in section~\ref{five} we establish a lower bound on the number of minimal solutions (Theorem~\ref{lower}), and in section~\ref{six} we prove an upper bound (Theorem~\ref{upper_improved}). The proof relates the upper bound with a new concept of inversions of a family of permutations (see Definition~\ref{def:inversion}, Theorem~\ref{inversion}) generalizing the customary inversions of a permutation.

\subsection{Conventions}
If  $E$ is a non-empty set, we denote by $|E|$ its cardinality and by  $\mathcal{P}(E)$ its power set. If $E=\{1,\ldots, n\}$ is finite, we denote it by $[n]$. We also use the following notation 

\begin{equation*}
    \binom{[n]}{i}=\begin{cases}
        \{X\subset [n]\::\:|X|=i\},&\text{ if }0\leq i\leq n,\\
        \emptyset,&\text{ otherwise}.\\
    \end{cases}
\end{equation*}

If $A\subset B$ are sets, we denote by $A\setminus B$ its relative complement. We denote by $Sym(n)$ the symmetric group, and we view a permutation from $Sym(n)$ as a bijection of the set $[n]$ with itself.



\section{Preliminaries}
\subsection{$\mathbb{B}$-semimodules and $\mathbb{B}$-algebras}
\label{Sect_PB}
 In this section we  will introduce the main algebraic objects of this work, which are   (semi)modules and algebras over the Boolean semifield $\mathbb{B}=(\{0<\infty\},\odot=+,\oplus=\min)$. This language provides an unified approach that mixes both order-theory and commutative algebra.

    \begin{definition}
    A $\mathbb{B}$-{\bf semimodule} is a triple $M=(M,+,0)$ consisting of a commutative, idempotent semigroup $(M,+)$ with identity element $0$. A $\mathbb{B}$-subsemimodule of $M$ is a subset $N\subseteq M$ which is itself a $\mathbb{B}$-semimodule. A morphism of $\mathbb{B}$-semimodules is a map $f:M_1\xrightarrow[]{}M_2$ that satisfies $f(0)=0$ and $f(a+b)=f(a)+f(b)$.
\end{definition}


Any $\mathbb{B}$-semimodule $M$ bears a canonical max order : $a\leq_M b$ if and only if $a+b=b$; note that $0\in M$ is always the unique bottom under this order. Sometimes we  use the opposite order $a\leq_M b$ if and only if $a+b=a$, and in this case we will denote the identity element by $\infty$.

 Thus we obtain a poset $(M,\leq)$ (we will drop the $M$ from $\leq_M $). In particular, if  $f:M_1\xrightarrow[]{}M_2$  is a morphism of $\mathbb{B}$-semimodules, then $f$ preserves  (respectively reverses) these orders if they agree (respectively if they disagree).

\begin{example}
    The power set $\mathcal{P}(\mathbb{Z}_{\geq0})$ of $\mathbb{Z}_{\geq0}$ equipped with union as binary operation is a $\mathbb{B}$-semimodule (the identity is $\emptyset$). Note that for $S,T\in \mathcal{P}(\mathbb{Z}_{\geq0})$, we have $S\leq T$ if and only if $S\subseteq T$.
\end{example}

For $n\geq1$ fixed, we will denote by $\mathcal{P}(\mathbb{Z}_{\geq0})^n$ the $\mathbb{B}$-semimodule which is the $n$-fold product of $\mathcal{P}(\mathbb{Z}_{\geq0})$. The identity is $(\emptyset,\ldots,\emptyset)$, and given $S,T\in \mathcal{P}(\mathbb{Z}_{\geq0})^n$ with $S=(S_1,\ldots,S_n)$ and $T=(T_1,\ldots,T_n)$, then  $S\leq T$ if and only if $S_i\subseteq T_i$ for $i\in[n]$.

\begin{definition}
    Let  $M$ be a $\mathbb{B}$-subsemimodule of $\mathcal{P}(\mathbb{Z}_{\geq0})^n$.  We denote by $\mu(M)=\min_{\leq}\{M\setminus \{(\emptyset,\ldots,\emptyset)\}\}$ the set of non-zero minimal elements of $(M,\leq)$.
\end{definition}

Given $\emptyset\neq X\subset M$, we denote by $\langle X\rangle^{\uparrow}=\{n\in M\::\:x\leq n\text{ for some }x\in X\}$. Note that $\langle X\rangle^{\uparrow}=M$ if and only if $(\emptyset,\ldots,\emptyset)\in X$. 

\begin{proposition}
\label{eq:sum}
    Let  $M\neq \{(\emptyset,\ldots,\emptyset)\}$ be a $\mathbb{B}$-subsemimodule of $\mathcal{P}(\mathbb{Z}_{\geq0})^n$. Then \begin{equation*}
         M=\langle  \mu(M)\rangle^{\uparrow}\cup\{(\emptyset,\ldots,\emptyset)\}
     \end{equation*}
\end{proposition}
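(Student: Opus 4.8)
The plan is to prove the two inclusions of the claimed equality separately. The inclusion $\langle\mu(M)\rangle^{\uparrow}\cup\{(\emptyset,\dots,\emptyset)\}\subseteq M$ is immediate from the definitions: by construction $\langle\mu(M)\rangle^{\uparrow}$ consists of elements of $M$, and $(\emptyset,\dots,\emptyset)$ is the identity $0$ of $M$, which lies in $M$ because $M$ is a $\mathbb{B}$-semimodule. Hence all the content is in the reverse inclusion $M\subseteq\langle\mu(M)\rangle^{\uparrow}\cup\{0\}$, which amounts to showing that every nonzero $m\in M$ dominates a minimal nonzero element, i.e. that there is some $x\in\mu(M)$ with $x\le m$.

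To produce such an $x$, I would fix $m\in M\setminus\{0\}$ and work inside the principal down-set $D_m:=\{x\in M:0\ne x\le m\}$, which is nonempty since $m\in D_m$. Any element that is minimal in $D_m$ is automatically minimal in $M\setminus\{0\}$: an element of $M\setminus\{0\}$ lying strictly below it would again belong to $D_m$, contradicting its minimality in $D_m$. So it suffices to locate a minimal element of $D_m$. The natural route is Zorn's lemma: given a chain $C\subseteq D_m$, form the componentwise intersection $x^\ast=(\bigcap_{x\in C}x_1,\dots,\bigcap_{x\in C}x_n)$, which satisfies $x^\ast\le x$ for all $x\in C$ and is the candidate lower bound, so that a maximal chain should terminate in a minimal element.

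The step I expect to be the main obstacle is exactly the verification that this descent terminates, equivalently that $D_m$ actually possesses a minimal element. Two things must be checked for the Zorn argument, and both require using more than mere closure under union: first, that the lower bound $x^\ast$ again lies in $M$ (closure under the relevant infinite intersections), and second, that $x^\ast\ne 0$, so that $x^\ast\in D_m$. Neither is automatic for an arbitrary union-closed $M\subseteq\mathcal{P}(\mathbb{Z}_{\ge0})^n$, since $\le$ is componentwise inclusion and infinite strictly decreasing chains of infinite subsets (for instance $S_i=\{i,i+1,\dots\}$, whose intersection is empty) can occur. The crux is therefore to invoke the structural features of the semimodules at hand — concretely, a descending chain condition guaranteeing that from any $m$ one descends in finitely many steps to a genuinely minimal element, after which $m\in\langle\mu(M)\rangle^{\uparrow}$ and the reverse inclusion follows.
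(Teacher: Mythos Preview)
Your approach coincides with the paper's: its entire proof reads ``We just need to prove the inclusion $M\setminus \{(\emptyset,\ldots,\emptyset)\}\subset \langle \mu(M)\rangle^{\uparrow}$, which follows from the fact that there are no infinite descending chains in the poset $(\mathcal{P}(\mathbb{Z}_{\geq0})^n,\leq)$.'' So the paper simply asserts the very descending-chain condition you were hoping to invoke.

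Your hesitation is warranted and the paper's assertion is false: your own example $S_i=\{i,i+1,\dots\}$ already exhibits an infinite strictly descending chain in $\mathcal{P}(\mathbb{Z}_{\geq0})$. Worse, this chain yields a counterexample to the proposition as stated. Take $n=1$ and $M=\{\emptyset\}\cup\{S_i:i\ge 0\}$; this is a $\mathbb{B}$-subsemimodule of $\mathcal{P}(\mathbb{Z}_{\geq0})$ because $S_i\cup S_j=S_{\min(i,j)}$, yet $\mu(M)=\emptyset$, so the right-hand side collapses to $\{\emptyset\}\neq M$. Thus no ``structural feature of the semimodules at hand'' will rescue the argument at this level of generality: the statement needs an extra hypothesis (for example that every element of $M$ has finite components, or---what the paper actually uses later---that $M=\Sol(\Sigma)$, where one can argue directly as in Lemma~\ref{lem:minimal_general} that any nonzero solution dominates one with finitely many monomials). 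Your write-up correctly isolates the gap; the missing idea is not a clever use of DCC but an additional assumption on $M$.
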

\begin{proof}
We just need to prove the inclusion $M\setminus \{(\emptyset,\ldots,\emptyset)\}\subset \langle  \mu(M)\rangle^{\uparrow}$, which follows from the fact that there are no infinite descending chains in the poset $(\mathcal{P}(\mathbb{Z}_{\geq0})^n,\leq)$.
\end{proof}

A $\mathbb{B}$-algebra is equivalent to an (additively) idempotent semiring. The canonical order of a   $\mathbb{B}$-algebra  is   the same as its order as  a $\mathbb{B}$-semimodule.

\begin{example}
    The $\mathbb{B}$-semimodule $\mathcal{P}(\mathbb{Z}_{\geq0})$ becomes a $\mathbb{B}$-algebra when we endow it with the product $ST=\{i+j\::\:i\in S,\:j\in T\}$ (this denotes the Minkowski sum of subsets of $\mathbb{Z}_{\geq0}$). The multiplicative  identity is $\{0\}$. 
    
    For $n\geq1$ fixed, we will denote by $\mathcal{P}(\mathbb{Z}_{\geq0})^n$ the $\mathbb{B}$-algebra which is the $n$-fold product of the $\mathbb{B}$-algebra $\mathcal{P}(\mathbb{Z}_{\geq0})$.
\end{example}

We denote by $\mathbb{T}$ the $\mathbb{B}$-algebra  $(\mathbb{R}\cup\{\infty\},\odot=+,\oplus=\min)$ known as the tropical semifield.  Note that $\mathbb{T}$ bears the min order: $a\leq b$ if and only if $a\oplus b=a$. We also denote by $\mathbb{T}^*$ the tropical torus $\mathbb{T}\setminus\{\infty\}=(\mathbb{R},0,\odot=+)$.

Recall that an expression $a=a_1\oplus\cdots\oplus a_n=min\{a_1,\ldots,a_n\}$  in $\mathbb{T}$ vanishes (tropically), if either 
\begin{enumerate}[label=\roman*)]
    \item $a=\infty$, or
    \item $a\neq\infty$, and $a=a_i=a_j$ for some $i,j\in[s]$ with $i\neq j$,
\end{enumerate}


Condition ii) above is equivalent to say that $n\geq2$, and the minimum in $a=min\{a_1,\ldots,a_n\}$ is achieved at least twice.

If we use set-theoretic notation $a=min\{a_1,\ldots,a_n\}$ instead of $a=a_1\oplus\cdots\oplus a_n$, we assume that the factors may not necessarily be distinct

 If $f\in\mathbb{T}[x_1,\ldots,x_n]$ is a tropical polynomial, say $f=\bigoplus_\alpha a_\alpha \odot x^{\odot \alpha}$ it induces an evaluation map $ev_f:\mathbb{T}^n\xrightarrow[]{}\mathbb{T}$ sending $p=(p_1,\ldots,p_n)\in\mathbb{T}^n$ to $ev_f(p)=\bigoplus_\alpha a_\alpha \odot p^{\odot \alpha}$. We denote by $V(f)\subset \mathbb{T}^n$ its corner locus, equivalently, the set of points $(p_1,\ldots,p_n)\in\mathbb{T}^n$  such that $ev_f(p)$ vanishes in $\mathbb{T}$.   

 We denote by $\odot:\mathbb{T}^*\times \mathbb{T}^n\xrightarrow[]{}\mathbb{T}^n$ the action of the tropical torus $\mathbb{T}^*$ on $\mathbb{T}^n$ given by $\lambda\odot p=(p_1+\lambda,\ldots,p_n+\lambda)$. Note that if a polynomial $f$ is homogeneous, then $\mathbb{T}^*$ acts naturally on $V(f)$: if $p\in V(f)$ and $\lambda\in \mathbb{T}^*$, then $\lambda\odot p=(p_1+\lambda,\ldots,p_n+\lambda)\in V(f)$.

\begin{definition}
\label{def:notvanishstr}
    We say that $a=a_1\oplus\cdots\oplus a_n=min\{a_1,\ldots,a_n\}$ {\bf vanishes weakly} if $a_i=a_j\neq\infty$ for some $i\neq j$.
\end{definition}

\begin{remark}
\label{rem:wv_tropical}
    Note that if $a=a_1\oplus\cdots\oplus a_n$ vanishes, then it vanishes weakly, but the converse is not true (tropical vanishing is weak vanishing plus the condition $a=a_i=a_j$). An equivalent way to say  that $a=a_1\oplus\cdots\oplus a_n$ does not vanish weakly is if all the factors $a_i\neq\infty$ are distinct, so the locus of all $(a_1,\ldots,a_n)\in(\mathbb{T^*})^n$  such that $a=a_1\oplus\cdots\oplus a_n$ weakly vanishes is the tropical hypersurface $V(\bigodot_{1\leq i<j\leq n}x_i\oplus x_j)$. 
\end{remark}


\subsection{Differential algebra over $\mathbb{B}$}

Fix $n\geq1$. A TLDE in $n$ unknowns $\{u_1,\ldots,u_n\}$ (differential variables) and of differential order $\mathbf{k}=(k_1,\ldots,k_n)\in(\mathbb{Z}_{>0})^n$, is a formal  expression $P$ as in \eqref{22_P}. This induces a differential evaluation map 
\begin{equation*}
    \begin{aligned}
        \text{dev}_P:\mathcal{P}(\mathbb{Z}_{\geq0})^n&\xrightarrow[]{}\mathcal{P}(\mathbb{Z}_{\geq0})\\
        (S_1,\ldots,S_n)&\mapsto P(u_{j}^{(i)}\mapsto d^i(S_j)),
    \end{aligned}
\end{equation*}

where  $d:\mathcal{P}(\mathbb{Z}_{\geq0})\xrightarrow[]{}\mathcal{P}(\mathbb{Z}_{\geq0})$ is the operator $d(S)=\{i-1\::\:i\in S,\:i-1\geq0\}$. Note that  $\text{dev}_P$ is a homomorphism of $\mathbb{B}$-semimodules.  

We denote by $ord:\mathcal{P}(\mathbb{Z}_{\geq0})\xrightarrow[]{}\mathbb{T}$ the  map defined by $ord(\emptyset)=\infty$ and $ord(S)=min(S)$ otherwise. Note that $ord$ is an (order-reversing) homomorphism of $\mathbb{B}$-semimodules, and its image is contained in $\mathbb{Z}\cup\{\infty\}$. We define $\text{trop}_P:\mathcal{P}(\mathbb{Z}_{\geq0})^n\xrightarrow[]{}\mathbb{T}$ by $\text{trop}_P(S)=ord\circ\text{dev}_P(S)$, which is an (order-reversing) homomorphism of $\mathbb{B}$-semimodules. If we denote  $ord(d^i(S_j))=\text{val}_{S_j}(i)$, then 
\begin{equation}
\label{eq:trop_def}
    trop_P(S_1,\ldots,S_n):=\bigoplus_{\substack{1\leq j\leq n\\
0\leq i\leq k_j}}a_{i,j}\odot \text{val}_{S_j}(i)=\min_{\substack{1\leq j\leq n\\
0\leq i\leq k_j}}\{{a_{i,j}}+\text{val}_{S_j}(i)\}.
\end{equation}

A tuple $S=(S_1,\ldots,S_n)\in \mathcal{P}(\mathbb{Z}_{\geq0})^n$ is a solution of \eqref{22_P} if and only if the value $trop_P(S)=trop_P(S_1,\ldots,S_n)$ from \eqref{eq:trop_def} vanishes tropically (see Section \ref{Sect_PB}). We denote by $\Sol(P)\subset \mathcal{P}(\mathbb{Z}_{\geq0})^n$ the set of solutions of $P$.

Now fix $m\geq1$, and consider a system $\Sigma=\{P_1,\ldots,P_m\}$ of TLDEs in $n\geq1$ unknowns. We denote by $\Sol(\Sigma)=\bigcap_l\Sol(P_l)$ the set of solutions of $\Sigma$.

\begin{proposition}
    The set $\Sol(\Sigma)$ is a $\mathbb{B}$-subsemimodule of  $\mathcal{P}(\mathbb{Z}_{\geq0})^n$.
\end{proposition}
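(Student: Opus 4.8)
The plan is to show that $\Sol(\Sigma)$ is closed under the binary operation $+$ (which is componentwise union here, since we work in $\mathcal{P}(\mathbb{Z}_{\geq0})^n$) and contains the identity element $(\emptyset,\ldots,\emptyset)$. Since $\Sol(\Sigma)=\bigcap_l\Sol(P_l)$ and an intersection of $\mathbb{B}$-subsemimodules is again a $\mathbb{B}$-subsemimodule, it suffices to prove the claim for a single $\Sol(P)$. So first I would reduce to showing that $\Sol(P)$ is a $\mathbb{B}$-subsemimodule for each $P$ in the system.

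For the identity element, note that $\mathrm{val}_{\emptyset}(i)=\infty$ for all $i$, so $trop_P(\emptyset,\ldots,\emptyset)=\infty$, which vanishes tropically by condition i) in the definition of tropical vanishing. Hence $(\emptyset,\ldots,\emptyset)\in\Sol(P)$. The substantive step is closure under $+$: given $S,T\in\Sol(P)$, I must show $S+T\in\Sol(P)$. The key structural input is that $trop_P=ord\circ\mathrm{dev}_P$ is an (order-reversing) homomorphism of $\mathbb{B}$-semimodules, as established in the excerpt. Concretely this gives
\begin{equation*}
trop_P(S+T)=trop_P(S)\oplus trop_P(T)=\min\{trop_P(S),trop_P(T)\},
\end{equation*}
so the value of $trop_P$ on the union is the minimum of the two values. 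The plan is to argue that tropical vanishing is inherited by this minimum.

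The main obstacle, and the point requiring care, is that tropical vanishing is \emph{not} simply a property of the scalar value $trop_P(S)$: it asserts that the minimum in the expression $\min_{i,j}\{a_{i,j}+\mathrm{val}_{S_j}(i)\}$ is attained at least twice. So I cannot merely track the real number $trop_P(S)$; I must track which indices realize the minimum. The clean way to handle this is to observe that $\mathrm{val}_{S+T}(i)=\min\{\mathrm{val}_S(i),\mathrm{val}_T(i)\}$ termwise (because $ord$ and $d$ are homomorphisms and $ord(S\cup T)=\min\{ord(S),ord(T)\}$), and then to do a short case analysis. If $trop_P(S)\leq trop_P(T)$, say, then the global minimum over $(i,j)$ for $S+T$ equals $trop_P(S)$, and every index pair attaining the minimum for $S$ still attains it for $S+T$ (since $\mathrm{val}_{S+T}(i)\leq\mathrm{val}_{S_j}(i)$ lowers or preserves each entry, and the total minimum has not dropped below $trop_P(S)$); because $S\in\Sol(P)$ supplies at least two such pairs, $S+T$ inherits at least two, so $S+T\in\Sol(P)$. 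The remaining subtlety is the possibility that the common value is $\infty$, i.e. $trop_P(S+T)=\infty$, but this case falls directly under vanishing condition i) and needs no index-counting.

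I would close by noting that the argument makes no use of $m>1$ beyond the elementary fact that the intersection $\bigcap_l\Sol(P_l)$ of sub-$\mathbb{B}$-semimodules is itself closed under $+$ and contains $(\emptyset,\ldots,\emptyset)$, so it is a $\mathbb{B}$-subsemimodule; this completes the proof. The genuinely delicate part is the bookkeeping that "attained at least twice" is preserved under taking minima, which is why I would phrase it through the termwise identity $\mathrm{val}_{S+T}(i)=\min\{\mathrm{val}_S(i),\mathrm{val}_T(i)\}$ rather than through the scalar homomorphism alone.
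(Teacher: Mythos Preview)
Your proposal is correct and follows the same strategy as the paper: reduce to a single $P$ via intersection of $\mathbb{B}$-subsemimodules, note $(\emptyset,\ldots,\emptyset)\in\Sol(P)$ because $trop_P(\emptyset,\ldots,\emptyset)=\infty$, and use that $trop_P$ is a $\mathbb{B}$-semimodule homomorphism for closure under $+$. The paper's own proof simply writes $trop_P(S\cup T)=trop_P(S)\oplus trop_P(T)$ and leaves it at that; your explicit verification via $\mathrm{val}_{(S+T)_j}(i)=\min\{\mathrm{val}_{S_j}(i),\mathrm{val}_{T_j}(i)\}$ that the minimum is still attained at least twice fills in a detail the paper treats as implicit, but it is not a different route.
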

\begin{proof}
    We have that if $P$ is linear, then $(\emptyset,\ldots,\emptyset)\in\Sol(P)$, since $trop_P(\emptyset,\ldots,\emptyset)=\infty$. If $S,T\in \Sol(P)$, then $trop_P(S\cup T)=trop_P(S)\oplus trop_P(T)$ since $trop_P$ is a homomorphism. The result follows from the fact that any intersection of $\mathbb{B}$-subsemimodules of $\mathcal{P}(\mathbb{Z}_{\geq0})^n$ is again a $\mathbb{B}$-subsemimodule of it.
\end{proof}

\begin{remark}
\label{rem:generation}
    It follows from Proposition \ref{eq:sum} that the $\mathbb{B}$-semimodule $\Sol(\Sigma)$ is generated (from an order-theoretic perspective) by the set of minimal elements $\mu(\Sol(\Sigma))=min_{\leq}(\Sol(\Sigma)\setminus\{(\emptyset,\ldots,\emptyset)\})$ of the poset $(\Sol(\Sigma),\leq)$. This is: $\Sol(\Sigma)=\langle  \mu(\Sol(\Sigma))\rangle^{\uparrow}\cup\{(\emptyset,\ldots,\emptyset)\}$.
\end{remark}

\begin{definition}
    We call a system $\Sigma$ {\bf holonomic} if it has a finite number of minimal solutions.
\end{definition}

In the coming  sections we will compute $\mu(\Sol(\Sigma))$ for different cases of systems $\Sigma$ of $m\geq1$ polynomials in $n\geq1$ variables, and we will bounds for those which are holonomic.

We will now introduce some functions on the space of parameters of the TLDEs $P$ as in \eqref{22_P} which will be used in the following. We shall start with the case $n=1$.
\begin{definition}
\label{def:linpols_n=1}
   For $k\in\mathbb{Z}_{>0}$ fixed, we consider the following family of tropical linear polynomials $\{A_j\::\:j=0,\ldots,k\}$, defined by
\begin{equation*}
    A_j:=\bigoplus_{i=0}^kval_{\{j\}}(i)\odot x_i\in \mathbb{T}[x_0,\ldots,x_k].
\end{equation*} 
\end{definition}

The space of parameters of TLDEs $P(u)=\bigoplus_{i=0}^{k}a_{i}\odot u^{(i)}$ of order $k$ can be identified with $\mathbb{Z}^{k+1}$ sitting inside  $\mathbb{T}^{k+1}$. If we denote by $c(P)=(a_{i})_{0\leq i\leq k}\in \mathbb{Z}^{k_j+1}$ the vector of coefficients of $P$, then we can evaluate the polynomial $A_{j}$ at the vector $c(P)$, and we have 
\begin{equation}
\label{eq_pol_duality}
    A_{j}(c(P))=\bigoplus_{i=0}^{k}val_{\{j\}}(i)\odot a_{i}=\min_{0\leq i\leq min\{j,k\}}\{a_{i}+(j-i)\}=trop_{P}(\{j\})
\end{equation}

We now discuss the case $n>1$: we assume that  $P$ is as in \eqref{22_P} and has  fixed differential order $\mathbf{k}=(k_1,\ldots,k_n)\in(\mathbb{Z}_{>0})^n$. We shall use \eqref{eq_pol_duality} to define the family $\{A_{\alpha,j}\::\:j\in[n],\:0\leq \alpha\leq k_j\}$. For $j\in[n]$ and $\alpha\in \mathbb{Z}_{\geq0}$ we consider $S_{\alpha,j}=(S_1,\ldots,S_n)\in\mathcal{P}(\mathbb{Z}_{\geq0})^n$ defined by $S_l=\emptyset$ if $l\neq j$, and $S_j=\{\alpha\}$. So $A_{\alpha,j}$ must satisfy $A_{\alpha,j}(c(P)):=trop_{P}(S_{\alpha,j})$, where $c(P)\in \prod_{j=1}^n\mathbb{Z}^{k_j+1}$ is the vector of coefficients of $P$,  which is a point of the space of parameters of these TLDEs. 

\begin{definition}
    \label{def:linpols_n>1}
    We consider the family of tropical linear polynomials $\{A_{\alpha,j}\::\:j\in[n],\:0\leq \alpha\leq k_j\}$  defined by
\begin{equation*}
    A_{\alpha,j}=\bigoplus_{i=0}^{k_j}val_{S_{\alpha,j}}(i)\odot x_{i,j}\in \mathbb{T}[x_{i,j}\::\:1\leq j\leq n,\:0\leq i\leq k_j].
\end{equation*}
\end{definition}

\begin{convention}
\label{conv:rep_sols}
Representing elements $S=(S_1,\ldots,S_n)\in\mathcal{P}(\mathbb{Z}_{\geq0})^n$ for $n>1$ sometimes becomes cumbersome.  We propose using multiplicative notation $S=S_1(t_1)+\cdots+ S_n(t_n)$, where $S_j(t_j)=\sum_{\alpha\in S_j}t_j^\alpha$ for $j\in[n]$, whenever $S_j\neq\emptyset$. If $S=(\emptyset,\ldots,\emptyset)$, we represent it as the series $S=0$.
\end{convention}

\begin{convention}
With our convention we can write $S_{\alpha,j}=t_j^\alpha$ instead.
    From now on we will denote $A_{\alpha,j}(c(P))=A_{\alpha,j}(P)=trop_{P}(t_j^\alpha)=trop_{P}(S_{\alpha,j})$. Note that if we write $P=P_1(u_1)\oplus P_2(u_2)\oplus \cdots\oplus P_n(u_n)$,  then  $A_{\alpha,j}(P)=trop_{P}(t_j^\alpha)=trop_{P_j}(t_j^\alpha)=A_{\alpha,j}(P_j)$ for all $j\in[n]$.
\end{convention}

\section{Minimal solutions of a single TLDE}
\label{sect:msstlde}
In this section we will  focus on studying the set $\mu(\Sol(P))$ of minimal non-zero solutions of a single  $P$, where  $P$ is a TLDE of the form \eqref{22_P} in $n\geq1$ unknowns $\{u_1,\ldots,u_n\}$, and of differential order $\mathbf{k}=(k_1,\ldots,k_n)\in(\mathbb{Z}_{>0})^n$. 

We now introduce basic concepts and general properties of the elements of $\mu(\Sol(P))$. 

\begin{remark}
    By our conventions, $P$ has at least two nonzero monomials $a_{0,j}\odot u^{(0)}$ and $a_{1,j}\odot u^{(1)}$, $a_0,a_1\in\mathbb{Z}$. Also, we have $a_{0,j}\neq\infty$ for all $j\in[n]$. These two facts imply that  $trop_P(S)\neq\infty$ if and only if $S\neq(\emptyset,\ldots,\emptyset)$. In what follows we will focus only on nonzero solutions.
\end{remark}


\begin{lemma}\label{lem:minimal_general}
Any minimal  solution $S$ of $P$ has at most two monomials.
\end{lemma}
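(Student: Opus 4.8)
The plan is to exploit minimality directly: I would show that a solution can always be shrunk down to the elements that actually ``witness'' the required double-attainment of the minimum, and that at most two such witnesses are needed. Recall that $S=(S_1,\ldots,S_n)$ being a solution means precisely that the finite value $\mathrm{trop}_P(S)=\min_{i,j}\{a_{i,j}+\mathrm{val}_{S_j}(i)\}$ is attained at least twice. Write $m:=\mathrm{trop}_P(S)$, which is finite since we only consider nonzero $S$, and fix two \emph{distinct} index pairs $(i_1,j_1)\neq(i_2,j_2)$ at which this minimum is achieved. Here by a monomial of $S$ I mean, in the multiplicative convention, a single term $t_j^\alpha$ with $\alpha\in S_j$, so the number of monomials is $\sum_j|S_j|$.

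The key observation is that each achieving pair $(i,j)$ is witnessed by a \emph{single} element of $S_j$: by definition $\mathrm{val}_{S_j}(i)=s-i$, where $s=\min\{s'\in S_j:s'\ge i\}$. Let $s_1\in S_{j_1}$ and $s_2\in S_{j_2}$ be the witnesses for the two chosen pairs, and let $S'\subseteq S$ be the subtuple obtained by keeping only $s_1$ in coordinate $j_1$ and $s_2$ in coordinate $j_2$, discarding everything else. By construction $S'\subseteq S$ and $S'$ has at most two monomials (exactly one if the two witnesses coincide as positions).

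It remains to check that $S'$ is still a solution. I would use the monotonicity of valuation under inclusion: since $S'_j\subseteq S_j$, the set $\{s'\in S'_j:s'\ge i\}$ shrinks, hence $\mathrm{val}_{S'_j}(i)\ge \mathrm{val}_{S_j}(i)$ for every $(i,j)$, giving $\mathrm{trop}_P(S')\ge m$ and forbidding any pair from dropping below $m$. On the other hand, $s_1$ is still the minimal element of $S'_{j_1}$ that is $\ge i_1$, so $\mathrm{val}_{S'_{j_1}}(i_1)=s_1-i_1=\mathrm{val}_{S_{j_1}}(i_1)$ and the pair $(i_1,j_1)$ again achieves $m$; the same holds for $(i_2,j_2)$. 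Thus $\mathrm{trop}_P(S')=m$ is finite and attained at two distinct pairs, so $S'\in\Sol(P)$. Since $S$ is minimal and $\emptyset\neq S'\subseteq S$, we conclude $S=S'$, which has at most two monomials.

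The one subtle point, and the step I would be most careful about, is verifying that the two retained witnesses still compute the valuations correctly after deletion. This is exactly where both the monotonicity $\mathrm{val}_{S'_j}(i)\ge\mathrm{val}_{S_j}(i)$ and the defining property of a witness (it is the \emph{smallest} element above $i$) are needed: deleting larger elements cannot displace $s_1$ as the minimal element of $S'_{j_1}$ above $i_1$, so the sandwich $s_1-i_1\le \mathrm{val}_{S'_{j_1}}(i_1)$ forced by membership, together with $\mathrm{val}_{S'_{j_1}}(i_1)\ge \mathrm{val}_{S_{j_1}}(i_1)=s_1-i_1$, yields the required equality. Everything else is a direct consequence of $\mathrm{trop}_P$ being an order-reversing homomorphism.
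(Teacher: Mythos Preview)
Your argument is correct and follows essentially the same route as the paper's proof: pick two index pairs where the minimum in $\mathrm{trop}_P(S)$ is attained, extract the two witnessing elements, and observe that the resulting subtuple is itself a solution, forcing equality by minimality. Your write-up is more explicit than the paper's about why the subtuple remains a solution (you spell out the monotonicity of $\mathrm{val}$ under inclusion and the sandwich argument for the witnesses), whereas the paper simply asserts that $t_i^p+t_j^q$ is a solution; but the underlying idea is identical.
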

\begin{proof}
Note that if $S\in \Sol(P)$, then there exists  $i,j\in [n]$ (not necessarily different), $0\leq b\leq k_i$, $0\leq c\leq k_j$, and $t_i^p+t_j^q\leq S$ with $b\leq p$ and $c\leq q$ (again, $p,q$ not necessarily distinct) such that 
    \begin{equation*}
    \label{neq0condition}
        \infty\neq trop_{P}(S)=a_{i,b}+p-b=a_{j,c}+q-c\leq a_{i,j}+\text{val}_{S_j}(i),
    \end{equation*}
    thus $t_i^p+t_j^q$ is also a solution of \eqref{22_P}, and $t_i^p+t_j^q=S$ by minimality.
\end{proof}

It follows from   Lemma \ref{lem:minimal_general} that any $S\in\mu(\Sol(P))$ can be expressed as $S=t_i^p+t_j^q$  with $i,j\in[n]$ (not necessarily distinct) and $p,q\in\mathbb{Z}_{\geq0}$ (not necessarily distinct). From now on we will express the minimal solutions in this form.

\begin{proposition}\label{minimal}
Let $S\in\mu(\Sol(P))$.
\begin{enumerate}[label=\roman*)]
    \item if $S=t_j^p+t_j^q$  where $p<q$, then $p<k_j$;
    \item If $S_1=t_i^p+t_j^{q_1}$ and $S_2=t_i^p+t_j^{q_2}$ are minimal solutions of (\ref{22_P}) satisfying $p<k_i$ and $k_j\leq q_1, q_2$, then $q_1=q_2=q(p)$, which is then uniquely determined.
\end{enumerate}
\end{proposition}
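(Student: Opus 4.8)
The plan is to reduce both parts to two ingredients: a general \emph{value formula} for minimal two-monomial solutions, and the strict monotonicity of $q\mapsto A_{q,j}(P)$ once $q\geq k_j$. To set up the value formula, let $S=t_i^p+t_j^q$ be any minimal solution, where $p$ is the element read off from the variable $u_i$ and $q$ the one read off from $u_j$. Since $\mathrm{trop}_P$ reverses the order and $t_i^p\leq S$, $t_j^q\leq S$ hold in $\mathcal{P}(\mathbb{Z}_{\geq0})^n$ (whether or not $i=j$, because $\{p\}\subseteq S_i$ and $\{q\}\subseteq S_j$), I get $\mathrm{trop}_P(S)\leq A_{p,i}(P)$ and $\mathrm{trop}_P(S)\leq A_{q,j}(P)$. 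For the reverse inequalities I would invoke the witnesses produced in the proof of Lemma~\ref{lem:minimal_general}: minimality forces $\mathrm{trop}_P(S)=a_{b,i}+p-b=a_{c,j}+q-c$ for some $b\leq\min\{k_i,p\}$ and $c\leq\min\{k_j,q\}$, and each of these two numbers occurs among the terms minimized in $A_{p,i}(P)$, resp.\ $A_{q,j}(P)$. Combining the two inequalities yields the formula
\[
\mathrm{trop}_P(S)=A_{p,i}(P)=A_{q,j}(P).
\]

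Next I would compute $A_{q,j}(P)$ in the relevant range. When $q\geq k_j$ we have $c\leq k_j\leq q$ for every $c\in\{0,\dots,k_j\}$, so $\mathrm{val}_{\{q\}}(c)=q-c$ with no infinities, giving $A_{q,j}(P)=q+\beta_j$ where $\beta_j:=\min_{0\leq c\leq k_j}(a_{c,j}-c)$ is a constant independent of $q$. Hence $q\mapsto A_{q,j}(P)$ is strictly increasing, and in particular injective, on $\{q\geq k_j\}$.

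With these two ingredients both statements follow. For (i): if $S=t_j^p+t_j^q$ with $p<q$ were minimal but $p\geq k_j$, then $p,q\geq k_j$, and the value formula (applied with $i=j$) gives $A_{p,j}(P)=A_{q,j}(P)$, i.e.\ $p+\beta_j=q+\beta_j$, forcing $p=q$, a contradiction; hence $p<k_j$. For (ii): applying the value formula to $S_1$ and $S_2$ gives $A_{q_1,j}(P)=A_{p,i}(P)=A_{q_2,j}(P)$; since $q_1,q_2\geq k_j$, injectivity on that range yields $q_1=q_2$, and the common value is the unique solution $q(p)=A_{p,i}(P)-\beta_j$ of the equation $q+\beta_j=A_{p,i}(P)$, which depends only on $p$ and on $P$.

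The step that needs the most care is the value formula in the case $i=j$, where $p$ and $q$ lie in the same coordinate $S_j=\{p,q\}$ and the two witnesses interact. There I must verify that the $q$-witness genuinely reads off the element $q$ — which forces its order $c$ to satisfy $p<c\leq q$, so that $\mathrm{val}_{\{p,q\}}(c)=q-c$ and the inequality $a_{c,j}+q-c\geq A_{q,j}(P)$ is legitimate — and likewise that the $p$-witness reads off $p$. Apart from this bookkeeping the argument is routine, and it is worth noting that the hypotheses $q_1,q_2\geq k_j$ are exactly what make $A_{\cdot,j}(P)$ linear (hence injective) on the range where it is evaluated, while $p<k_i$ keeps $q(p)$ a genuine function of $p$ in the mixed regime.
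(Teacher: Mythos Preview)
Your value-formula approach is essentially the paper's argument for part~(ii), repackaged more cleanly: the paper also shows $\mathrm{trop}_P(S_1)=\mathrm{trop}_P(S_2)$ (both equal to what you call $A_{p,i}(P)$) and then derives $q_1=q_2$ from the affine growth $A_{q,j}(P)=q+\beta_j$ on $q\geq k_j$, just as you do.

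For part~(i), however, there is a circularity. In the diagonal case $i=j$ with $S_j=\{p,q\}$ and $p<q$, a $q$-witness is a term $a_{c,j}+q-c$ coming from an index $c$ with $\mathrm{val}_{\{p,q\}}(c)=q-c$; as you note in your ``care'' paragraph, this forces $p<c$, and combined with your earlier constraint $c\leq\min\{k_j,q\}$ you get $p<c\leq k_j$, hence $p<k_j$. So the very existence of a $q$-witness --- which is exactly what you need in order to conclude $\mathrm{trop}_P(S)=A_{q,j}(P)$ --- already presupposes~(i). If instead $p\geq k_j$, then no index $c\in\{0,\dots,k_j\}$ satisfies $c>p$, every achieving term reads off $p$, and the correct conclusion is the paper's direct one: $t_j^p$ is already a solution, contradicting minimality of $S$. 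Your detour via $A_{p,j}(P)=A_{q,j}(P)\Rightarrow p+\beta_j=q+\beta_j\Rightarrow p=q$ invokes the value formula in precisely the regime where its derivation fails. The fix is immediate: extract~(i) from your ``care'' step itself (the constraint $p<c\leq k_j$ on the $q$-witness gives it directly), and only afterwards record the value formula for use in~(ii).
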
 
\begin{proof}
For  $i)$ suppose on the contrary that $k_j\le p$. Then for all  $0\le i\le k_j$ it holds
\begin{eqnarray}\label{2}
a_{i,j}+val_S(i)=a_{i,j}+p-i
\end{eqnarray}
Hence $t_j^p$ is a solution of (\ref{22_P}) which contradicts the minimality of $S=t_j^p+t_j^q$.
    For point $ii)$, suppose that  $q_1<q_2$. Then $q_1$ must satisfy 
    $$trop_P(S_1)=a_{i,l}+p-l=a_{j,j_1}+q_1-j_1$$
    for some $0\leq l\leq p$, and 
    \begin{equation*}
        \begin{cases}
            0\leq j_1\leq k_j\leq q_1,&\text{ if }i\neq j,\\
            p\leq j_1\leq k_j\leq q_1,&\text{ if }i= j.
        \end{cases}
    \end{equation*}
    Likewise, $q_2$ must satisfy 
    $$trop_P(S_2)=a_{i,l'}+p-l'=a_{j,j_1'}+q_2-j_1'$$
    for some $0\leq l'\leq p$, and $0\leq j_1'\leq k_j\leq q_2$ if $i\neq j$, or $p\leq j_1'\leq k_j\leq q_2$ if $i= j$. We claim that $trop_P(S_1)=trop_P(S_2)$, otherwise it wouldn't be the global minimum. But due to (\ref{2}) we have 
    $$a_{j_2}+q_2-j_2>a_{j_2}+q_1-j_2\ge M_1=M_2,$$
    for every $0\leq j_2\leq k_j$ if $i\neq j$, or $p\leq j_2\leq k_j$ if $i= j$, which contradicts that $S_2=t_i^p+t_j^{q_2}$ is a minimal solution.     Thus $S_1=S_2=S=t_i^p+t_j^q$, and $q$ must satisfy $$trop_P(S)=a_{i,l}+p-l=a_{j,j_1}+q-j_1,$$
    so it is uniquely determined by $p$.
\end{proof}

We will now introduce the  decomposition $\mu(\Sol(P))=\mathcal{C}_{\mathbb{Z}_{\geq0}}(P)\sqcup F(P)$ from \eqref{eq:intro_desc} which will be very important in the following.

\begin{definition}
\label{eq:decompo}
    We denote by $\mathcal{C}_{\mathbb{Z}_{\geq0}}(P)$  the set of minimal solutions of $P$ having $t$-adic order at least $\mathbf{k}$; this is $S=(S_1,\ldots,S_n)\in\mu(\Sol(P))$ such that $ord(S_i)\geq k_i$ for all $i\in[n]$. We denote by  $F(P)=\mu(\Sol(P))\setminus\mathcal{C}_{\mathbb{Z}_{\geq0}}(P)$.
\end{definition}

The set $\mathcal{C}_{\mathbb{Z}_{\geq0}}(P)$ will be described in Theorem \ref{thm:s_n=1} for the case $n=1$, and in Theorem \ref{thm:s_n>1} for the case $n>1$.

\begin{remark}
    If $P=\bigoplus_jP_j(u_j)$ for $n>1$, then $\bigcup_{j=1}^n\mu(\Sol(P_j))\subset \mu(\Sol(P))$. Indeed, if $S=S_j(t_j)$ for some $j\in[n]$ (see Convention \ref{conv:rep_sols}), then $trop_P(S)=trop_{P_j}(S)$, so $S=S_j(t_j)\in\Sol(P)$ if and only if $S\in\Sol(P_j)$. Minimality also can be inferred from this fact.  So in order  to understand the case $n>1$, it is necessary to understand first the case $n=1$, which we will discuss in the next section.
\end{remark}

\subsection{The case $n=1$}
In this section we will be exploring the case in which $P$ is a TLDE as in \eqref{1} of fixed differential order $k\in\mathbb{Z}_{>0}$. Recall from Definition \ref{def:linpols_n=1} that  $A_k=\bigoplus_{i=0}^kval_{\{k\}}(i)\odot x_i=min_{\substack{0\leq i\leq k}}\{x_i+(k-i)\}$.


\begin{proposition}
    \label{p:holonomic}
        Let $P$ be a TLDE. The following statements are equivalent:
        \begin{enumerate}[label=\roman*)]
            \item $P$ is non-holonomic,
            \item $S=\{s\}$ is its  solution for some $s\ge k$.
        \end{enumerate}
        In this case $\{q\}$ is also its solution for any $q\ge k$.
    \end{proposition}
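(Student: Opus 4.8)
The plan is to reduce the claim to a single observation: for a singleton whose unique element is at least $k$, the value $trop_P(\{s\})$ differs from $\min_{0\le i\le k}\{a_i-i\}$ only by the additive constant $s$, so the set of indices realizing the minimum does not depend on $s$.

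First I would compute the valuation explicitly. For the singleton $S=\{s\}$ and any $0\le i\le k$, the definition gives $val_{\{s\}}(i)=\min\{s-i\::\:i\le s\in\{s\}\}$. Since $s\ge k\ge i$, the constraint $i\le s$ always holds, so $val_{\{s\}}(i)=s-i$ is finite for every $0\le i\le k$. Substituting into \eqref{eq:trop_def} gives
\[
trop_P(\{s\})=\min_{0\le i\le k}\{a_i+val_{\{s\}}(i)\}=\min_{0\le i\le k}\{a_i+s-i\}=s+\min_{0\le i\le k}\{a_i-i\},
\]
which for $s=k$ is precisely $A_k(c(P))=trop_P(\{k\})$ by \eqref{eq_pol_duality}. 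The key point I would then extract is that adding the constant $s$ translates every term $a_i-i$ by the same amount, so the set $I:=\{i\in\{0,\dots,k\}\::\:a_i-i=\min_{0\le l\le k}\{a_l-l\}\}$ of indices at which the minimum is attained is independent of $s$. Hence whether the minimum defining $trop_P(\{s\})$ is attained at least twice --- that is, whether $\{s\}$ solves $P$ --- is controlled purely by the $s$-free condition $|I|\ge2$.

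With this in hand the conclusion is immediate. Under the hypothesis (condition ii)) there is some $s\ge k$ with $\{s\}\in\Sol(P)$, so the minimum above is attained at least twice and $|I|\ge2$. For an arbitrary $q\ge k$ the same computation yields $trop_P(\{q\})=q+\min_{0\le i\le k}\{a_i-i\}$ with the minimum attained precisely on the same set $I$; since $|I|\ge2$ the value $trop_P(\{q\})$ vanishes tropically, i.e. $\{q\}\in\Sol(P)$. This establishes the assertion for every $q\ge k$.

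I do not anticipate a serious obstacle for this final statement: it amounts to the translation-invariance of the minimizing index set $I$, and the only delicate point is verifying that $val_{\{s\}}(i)$ is finite for all $0\le i\le k$, which is precisely where the hypothesis $s\ge k$ enters. The substantive difficulty of the proposition lies elsewhere, in the equivalence i)$\Leftrightarrow$ii); in particular the direction i)$\Rightarrow$ii) requires bounding the minimal two-term solutions $t^p+t^q$ through Lemma~\ref{lem:minimal_general} and Proposition~\ref{minimal} to deduce holonomicity when $|I|=1$, but this is independent of the argument sketched above.
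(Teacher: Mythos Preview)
Your proposal is correct and follows essentially the same approach as the paper. The translation-invariance observation you make explicit --- that $trop_P(\{s\})=s+\min_i\{a_i-i\}$ so the minimizing index set $I$ is independent of $s\ge k$ --- is exactly what the paper uses (more tersely) to pass from one singleton solution to all of them; the only step you leave implicit is that each singleton $\{q\}$ is automatically minimal, so infinitely many of them forces non-holonomicity, giving ii)$\Rightarrow$i). Your sketch of i)$\Rightarrow$ii) via Lemma~\ref{lem:minimal_general} and Proposition~\ref{minimal} is precisely the paper's argument: there are only finitely many minimal two-element solutions, so a non-holonomic $P$ must admit a singleton minimal solution $\{s\}$, and by Proposition~\ref{minimal}~i) such a singleton necessarily has $s\ge k$.
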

\begin{proof}
First assume that $\{s\}$ is a solution of (\ref{1}) for some $s\ge k$. Then the minimum in $\min_{0\le i\le k} \{a_i+s-i\}$ is attained at least twice (cf. (\ref{2})). Therefore the minimum in $\min_{0\le i\le k} \{a_i+q-i\}$ is also attained at least twice for any $q\ge k$. Hence the equation (\ref{1}) is non-holonomic.

Conversely, let (\ref{1}) be non-holonomic. There are at most finite number of minimal solution of (\ref{1}) of the form $\{p,q\},\ p\neq q$ taking into account  Proposition \ref{minimal}. Thus there exists a minimal solution of (\ref{1}) of the form $\{s\}$ for suitable $s\ge k$.
\end{proof}

\begin{remark}\label{singular}
One can easily algorithmically test whether (\ref{1}) is non-holonomic. Due to Proposition~\ref{p:holonomic} ii) the latter is equivalent to  $\{k\}$ being a solution of (\ref{1}), in other words that the minimum in $A_k(P)=\min_{0\le i\le k} \{a_i+k-i\}$ is attained at least twice. Then  $Sing:=V(A_k)\subset \ZZ^{k+1}$ is the set of all non-holonomic equations (\ref{1}). 
\end{remark}

 Let $P$ be a TLDE as in (\ref{1}) of differential order $k$. We are now ready to state the main result of this Section, which is describing the decomposition  $\mu(\Sol(P))=\mathcal{C}_{\mathbb{Z}_{\geq0}}(P)\sqcup F(P)$ from Definition \ref{eq:decompo}.

\begin{theorem}[Structure of $\mu(\Sol(P))$ for $n=1$]
\label{thm:s_n=1} 
Consider $P$ as above.
\begin{enumerate}[label=\roman*)] 
    \item The set $\mathcal{C}_{\mathbb{Z}_{\geq0}}(P)$ satisfies \begin{equation*}
    \mathcal{C}_{\mathbb{Z}_{\geq0}}(P)=\begin{cases}
        \emptyset, &\text{ if }P\notin V(A_k),\\
        \{t^{i}\cdot t^k\::\:i\in \mathbb{Z}_{\geq0}\},&\text{ otherwise}.\\
    \end{cases}
\end{equation*} 
In particular, the system $\Sigma=\{P\}$ is holonomic if and only if $P\notin V(A_k)$.
    \item The set $F(P)$ is finite, and it consists of polynomials $S=t^p+t^q$, with $0\leq p\leq q$ satisfying
    \begin{enumerate}
        \item $0\leq p\leq q< k$, or 
        \item $0\leq p< k\leq q$ for $q=q(p)$ unique (only when $P\notin V(A_k)$).
    \end{enumerate}
\end{enumerate}
\end{theorem}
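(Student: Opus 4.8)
The plan is to prove the two parts separately, exploiting the duality \eqref{eq_pol_duality} that identifies $\text{trop}_P(\{j\})$ with the evaluation $A_j(P)$ of the tropical linear polynomial, together with the structural constraints on minimal solutions already established in Proposition \ref{minimal}. Throughout I would write a minimal solution as $S=t^p+t^q$ with $0\le p\le q$, as justified by Lemma \ref{lem:minimal_general}. The central computational fact I would use repeatedly is that for $s\ge k$ one has $\text{val}_{\{s\}}(i)=s-i$ for \emph{all} $0\le i\le k$, so that the expression $\min_{0\le i\le k}\{a_i+\text{val}_{\{s\}}(i)\}$ is just a shift by $(s-k)$ of $A_k(P)=\min_{0\le i\le k}\{a_i+k-i\}$; the position(s) at which the minimum is attained are therefore independent of $s$.

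\textbf{Part i): the tail $\mathcal{C}_{\mathbb{Z}_{\ge0}}(P)$.}

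First I would handle the singleton solutions. By Definition \ref{eq:decompo}, an element of $\mathcal{C}_{\mathbb{Z}_{\ge0}}(P)$ has order at least $k$, so either it is a singleton $\{s\}$ with $s\ge k$ or it is $t^p+t^q$ with $k\le p\le q$. For a singleton $\{s\}$ with $s\ge k$, the shift observation shows that $\{s\}$ is a solution iff the minimum in $A_k(P)$ is attained at least twice, i.e.\ iff $P\in V(A_k)$; this is exactly Proposition \ref{p:holonomic} combined with Remark \ref{singular}. Thus if $P\notin V(A_k)$ there are no singleton tail solutions, and I would then rule out two-term tail solutions $t^p+t^q$ with $k\le p<q$: since $p\ge k$, the same shift argument makes $\{p\}$ already a solution, so $\{p\}\subsetneq\{p,q\}$ contradicts minimality. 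This gives $\mathcal{C}_{\mathbb{Z}_{\ge0}}(P)=\emptyset$ in that case. When $P\in V(A_k)$, every $\{k+i\}$ is a solution by the shift argument, and each is minimal since its only proper nonzero subset is $\emptyset$; conversely any two-term $t^p+t^q$ with $k\le p$ is non-minimal as above, so the tail is exactly $\{t^i\cdot t^k:i\in\mathbb{Z}_{\ge0}\}$. The final ``in particular'' clause is then immediate from the definition of holonomic together with Part ii) giving finiteness of $F(P)$.

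\textbf{Part ii): finiteness and shape of $F(P)$.}

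Here $S=t^p+t^q\in F(P)$ means $S$ is a minimal solution not lying in the tail, so $\min(S)=p<k$. The two cases correspond to whether $q<k$ or $q\ge k$. Case (a), $0\le p\le q<k$, is automatically a finite set (at most $\binom{k}{2}+k$ candidates), so only finitely many can be minimal solutions. Case (b), $0\le p<k\le q$, is precisely the situation of Proposition \ref{minimal} ii) with $i=j$: that proposition shows that for each such $p$ the value $q=q(p)\ge k$ is \emph{uniquely} determined, so there is at most one minimal solution of this shape per $p<k$, hence at most $k$ of them. I would also record why case (b) can only occur when $P\notin V(A_k)$: if $P\in V(A_k)$ then $\{k\}\subseteq\{p,q\}$ would already be... more precisely, when $P\in V(A_k)$ the tail singleton $\{q\}$ (for $q\ge k$) is itself a solution, so $t^p+t^q$ with $q\ge k$ could be minimal only if removing $t^q$ fails, but then minimality forces comparison with the singleton $\{q\}\subsetneq S$, contradicting minimality; I would spell this out carefully. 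Combining (a) and (b), $F(P)$ is finite.

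\textbf{Main obstacle.}

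The routine finiteness bookkeeping is easy; the delicate point is the uniqueness assertion in case (b) and its clean separation from the tail. The argument of Proposition \ref{minimal} ii) is already available, so the real work is the case analysis ensuring that a two-term minimal solution with $q\ge k$ cannot coexist with $P\in V(A_k)$ (where the singleton $\{q\}$ is itself a solution and would undercut minimality). I would treat this boundary interaction between $F(P)$ and $\mathcal{C}_{\mathbb{Z}_{\ge0}}(P)$ as the crux, making sure that the dichotomy $P\in V(A_k)$ versus $P\notin V(A_k)$ is applied consistently so that the two pieces of the decomposition are genuinely disjoint and exhaustive.
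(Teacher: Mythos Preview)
Your proposal is correct and follows essentially the same route as the paper: the paper's own proof is a two-line invocation of Proposition~\ref{p:holonomic} for part i) and Proposition~\ref{minimal} for part ii), and you have simply unpacked how those results assemble into the statement (including the shift argument behind Proposition~\ref{p:holonomic} and the observation that a two-term $t^p+t^q$ with $k\le p<q$ reduces to the singleton $\{p\}$, which is the content of Proposition~\ref{minimal}~i)). The one thing you spell out that the paper leaves implicit is why case~(b) of $F(P)$ is vacuous when $P\in V(A_k)$; your argument that $\{q\}$ would then be a proper solution subset of $\{p,q\}$ is exactly the right justification.
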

\begin{proof}
The first part comes from Proposition~\ref{p:holonomic}, and the second part  from Proposition~\ref{minimal}. \end{proof}

\subsubsection{A notion of generic position for ordinary TLDE}
Theorem~\ref{thm:s_n=1} implies that a holonomic equation (\ref{1}) can contain as its minimal solutions sets of the form either $\{j,l\},\ 0\le j\le l<k$ or for each $0\le p<k$ at most one set of the form $\{p,q\}$ where $q\ge k$. 
We call a family of minimal sets of the form  $\{j,l\},\ 0\le j\le l<k$ and of minimal sets of the form $\{p,\star\},\ 0\le p<k$  (where $\star$ stays for some indeterminate $q\ge k$) a {\it configuration}. In this case we use the notation $\{p,q\}\in \{p,\star\}$. Thus, there is a finite number of possible configurations. 

Each configuration $C$ (including the empty one) provides a set $U_C\subset \ZZ^{k+1}$ of holonomic equations $P$ of the form (\ref{1}) satisfying $\mu(Sol(P))=C$.

\begin{theorem}\label{complete}
The collection $\{U_C\}_C$ where $C$ runs through all the configurations, is a partition of $\mathbb{T}^{k+1}\setminus Sing$ into polyhedral complexes in which every polyhedron is given by integer linear constraints of the form either $x(j)-x(l)=c,\ 0\le j,l\le k,\ c\in \ZZ$ or $x(j)-x(l)\ge c$. The polyhedral complex $U_{C_0}$ corresponding to the configuration $C_0:= \{\{i,\star\}\  |\ 0\le i<k\}$  has the full dimension $k+1$.
\end{theorem}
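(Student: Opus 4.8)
The plan is to prove the three assertions in turn: that the sets $U_C$ partition $\mathbb{T}^{k+1}\setminus Sing$, that each is cut out by the admissible linear constraints, and that $U_{C_0}$ is full-dimensional.

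First I would note that $\{U_C\}_C$ is a partition for the simple reason that $\mu(\Sol(P))$ is a well-defined invariant of $P$. By Theorem~\ref{thm:s_n=1} a TLDE $P$ lies in $\mathbb{T}^{k+1}\setminus Sing$ exactly when it is holonomic, and then $\mu(\Sol(P))$ is a finite set of the \emph{configuration} type: finitely many low sets $\{j,l\}$ with $0\le j\le l<k$, together with at most one escaping pair $\{p,\star\}$ for each $0\le p<k$ (Proposition~\ref{minimal}). Hence each holonomic $P$ lies in $U_C$ for the single configuration $C=\mu(\Sol(P))$, and since there are finitely many configurations we obtain a finite partition.

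Next I would translate membership into linear constraints. Writing $x(i)$ for the coordinate functions on the parameter space, the predicate ``$S$ is a solution'' means the minimum of $trop_P(S)=\min_i\{a_i+val_S(i)\}$ is attained at least twice; this unwinds into a finite disjunction, over pairs $m\neq m'$, of one equality $x(m)-x(m')=val_S(m')-val_S(m)$ (the two competing terms agree) together with inequalities $x(m'')-x(m)\ge val_S(m)-val_S(m'')$ (that common value is minimal), all with integer right-hand sides. Its negation ``attained uniquely'' is a disjunction of strict inequalities, which on the integer lattice become non-strict inequalities $x(j)-x(l)\ge c$ after lowering the bound by $1$. Since $\mu(\Sol(P))=C$ is a Boolean combination of such predicates (for the candidate sets permitted by Theorem~\ref{thm:s_n=1}), $U_C$ is semilinear and decomposes into polyhedra of the stated form. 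The structural point I would isolate is that an escaping pair costs no equality: with $c_p:=\min_{p<l\le k}\{a_l-l\}$ one has $trop_P(\{p,q\})=\min(A_p(P),\,q+c_p)$ for $q\ge k$, so $\{p,q\}$ is a minimal solution precisely for $q=A_p(P)-c_p$, which for integer coefficients is automatically an integer; thus the \emph{existence} of $\{p,\star\}$ reduces to the inequality $A_p(P)-c_p\ge k$ plus genericity inequalities, whereas every low pair or singleton forces a genuine equality $x(m)-x(m')=c$ and hence lies in a proper subspace. This already explains why the configurations containing low pairs or singletons are lower-dimensional.

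Finally, for the full dimension of $U_{C_0}$ I would exhibit an explicit interior point. Taking $a_l=-Nl$ with $N$ a large integer, a direct computation gives $A_i(P)=-Ni$ attained uniquely at $l=i$, the minima governing the low pairs attained uniquely, and $c_i=-(N+1)k$ attained uniquely at $l=k$; hence no singleton and no $\{j,l\}$ with $j,l<k$ is a solution, while for each $0\le i<k$ the integer $q_i=A_i(P)-c_i=N(k-i)+k\ge k$ yields a minimal solution $\{i,q_i\}$, so $\mu(\Sol(P))=C_0$. Since by the previous paragraph the constraints defining $U_{C_0}$ are inequalities $x(j)-x(l)\ge c$ with no forced equality, and since they hold \emph{strictly} at $a_l=-Nl$ (and $A_i(P)-c_i$ is integral at every integer point, so the escaping pairs persist under integer perturbation), this point is interior to the polyhedron they cut out; that polyhedron is therefore full-dimensional and $\dim U_{C_0}=k+1$. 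I expect the main obstacle to be precisely this last step: verifying that the explicit region produces \emph{exactly} $C_0$ with no spurious solutions, together with the bookkeeping in the second part that converts ``attained twice''/``attained uniquely'' into the admissible constraints while correctly passing between strict and non-strict inequalities via integrality.
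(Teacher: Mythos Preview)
Your proposal is correct and follows essentially the same strategy as the paper: the partition is immediate, the polyhedral description comes from unwinding the ``minimum attained twice/once'' predicates into linear equalities and inequalities in the $x(i)$, configurations containing a singleton or a low pair $\{j,l\}$ with $j\le l<k$ are forced onto a hyperplane and hence have dimension $\le k$, and $U_{C_0}$ is exhibited as full-dimensional via an explicit decreasing sequence of coefficients.

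The only noteworthy difference is in this last step. The paper takes the whole cone $a_0\ge a_1\ge\cdots\ge a_k$, checks that there each $\{j,\star\}$ is realized by $q_j=a_j-a_k+k\ge k$, and then concludes \emph{indirectly}: since every configuration strictly larger than $C_0$ lies in a set of dimension $\le k$, the full-dimensional cone must meet $U_{C_0}$ in a full-dimensional set. You instead pick the specific steep point $a_l=-Nl$ and verify \emph{directly} that no singleton and no low pair $\{j,l\}$ is a solution there, so the configuration is exactly $C_0$; this avoids the appeal to the dimension bounds at the cost of a short extra computation. Both routes are fine, and your direct verification is arguably cleaner.
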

\begin{proof}
First we note that $\dim (Sing)\le k$ because of Remark~\ref{singular}. By the same token if a configuration $C$ contains a minimal solution of the form $\{j\},\ 0\le j<k$ then $\dim U_C \le k$. If $C$ contains a minimal solution of the form $\{j,l\},\ 0\le j\neq l<k$ then it holds  $\min_{0\le i\le j} \{a_i+j-i\} = \min_{0\le i\le l} \{a_i+l-i\}$ (cf. (\ref{2})), and therefore $\dim U_C \le k$ as well.

Now consider the configuration $C_0$. Then the following system of min-plus linear equations holds:
\begin{eqnarray}\label{3}
\min_{0\le i\le j} \{a_i+j-i\} = \min_{j<i\le k} \{a_i+q_j-i\},\ 0\le j<k
\end{eqnarray}
for appropriate $q_j\ge k$ (cf. (\ref{2})). Taking $a_0, \dots, a_k$ satisfying inequalities $a_0\ge a_1\ge \cdots \ge a_k$, we obtain that 
$$\min_{0\le i\le j} \{a_i+j-i\}=a_j,\ \min_{j<i\le k} \{a_i+q_j-i\}= a_k+q_j-k,\ 0\le j<k.$$
\noindent Hence (\ref{3}) implies that $q_j=a_j-a_k+k\ge k,\ 0\le j<k$.

Since for any configuration $C$ such that $C$ contains a minimal solution of the form $\{j,l\},\ 0\le j\le l<k$ it holds $\dim U_C \le k$ (see above), we complete the proof.
\end{proof}

\begin{example}\label{non-holonomic}
    If $k=1$ an equation  $P=a_0\odot u^{(0)}\oplus a_1\odot u^{(1)}=\min \{a_1+u^{(1)},\ a_0+u^{(0)}\}$ as in \eqref{1} has
    \begin{itemize}
        \item no (non-zero) solutions when $a_1-1> a_0$. Below in Section \ref{Section:criterion} we will generalize this result for  arbitrary $k$;
        \item the minimal solutions $\{q\::\:\ q\ge 1\}$ when $a_1-a_0=1$, so the equation (\ref{1}) is non-holonomic;
        \item a single minimal solution  $\{0, a_0-a_1+1\}$ when $a_0-a_1\ge 0$.
    \end{itemize}
Thus, in Theorem~\ref{complete} the polyhedron $U_{C_0}\subset \ZZ^2$ is given by the constraint $a_0-a_1\ge 0$, where $C_0=\{0,\star\}$.
\end{example}

\begin{example}
    While Theorem~\ref{complete} shows that a generic TLDE of order $k$ has $k$ minimal solutions, the (holonomic) equation $P=\bigoplus_{i=0}^k0\odot u^{(i)}=\min_{0\le i\le k} \{u^{(i)}\}$ has $k(k+1)/2$ minimal solutions $\{j,l\},\ 0\le j<l\le k$. Thus, its configuration contains all possible minimal solutions (cf. Proposition~\ref{minimal}).
\end{example}


\begin{definition}
\label{def_reg_n=1}
We say that $P$ is {\bf regular} if 
    \begin{enumerate}
        \item $P\notin \bigcap_{j=0}^kV(A_j)$,  and
        \item $A_0(P)\oplus\cdots\oplus A_{k-1}(P)$ does not vanish weakly in $\mathbb{T}$ (see Definition \ref{def:notvanishstr}).
    \end{enumerate}
\end{definition}

We have an alternative characterization of regularity in terms of the structure of $\mu(\Sol(P))$.

\begin{proposition}
\label{p:Pregn=1}
    We have that $P$ is regular if and only if all of the elements of $\mu(\Sol(P))$ are of the form $\{p,q\}$ with $0\leq p<k\leq q$.
\end{proposition}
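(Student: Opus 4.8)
The plan is to read off both implications from the decomposition $\mu(\Sol(P))=\mathcal{C}_{\mathbb{Z}_{\geq0}}(P)\sqcup F(P)$ of Theorem~\ref{thm:s_n=1}, together with the shape constraints on minimal solutions coming from Lemma~\ref{lem:minimal_general} and Proposition~\ref{minimal}. First I would restate the two clauses of Definition~\ref{def_reg_n=1} in operational terms: clause (1) says that no singleton $\{j\}$ with $0\le j\le k$ is a solution of $P$ (i.e. $P$ avoids every hyperplane $V(A_j)$, $0\le j\le k$), and clause (2) says that the numbers $A_0(P),\dots,A_{k-1}(P)$ are pairwise distinct (this is exactly the failure of weak vanishing of $A_0(P)\oplus\cdots\oplus A_{k-1}(P)$, cf. Definition~\ref{def:notvanishstr}). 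Note that clause (1) already forces $P\notin V(A_k)$, hence $\mathcal{C}_{\mathbb{Z}_{\geq0}}(P)=\emptyset$ by Theorem~\ref{thm:s_n=1}(i) and Proposition~\ref{p:holonomic}; it also kills every singleton minimal solution, since a singleton solution of order $\ge k$ can occur only when $P\in V(A_k)$.

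The computational heart of the argument is a single normal form. Writing $b_i=a_i-i$ and $m_j=\min_{0\le i\le j}b_i$, one has $A_j(P)=m_j+j$, and for $p<q$
\[
trop_P(\{p,q\})=\min\{m_p+p,\ m'_{p,q}+q\},\qquad m'_{p,q}:=\min_{p<i\le\min\{q,k\}}b_i,
\]
so that $trop_P(\{p,q\})$ splits into an ``$i\le p$'' block (value $A_p(P)$) and a ``$p<i\le q$'' block. Assuming clause (1), every running minimum $m_j$ is attained by a unique index, and from this I would prove the key equivalence: for $p<q\le k$, the pair $\{p,q\}$ is a minimal solution if and only if $A_p(P)=A_q(P)$. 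Indeed, if $\{p,q\}$ is minimal then neither block minimum may be attained twice (else $\{p\}$ or $\{q\}$ would already solve $P$), so the two block values must tie, which forces $m'_{p,q}+q=m_p+p$ and hence $A_q(P)=m'_{p,q}+q=A_p(P)$; conversely $A_p(P)=A_q(P)$ forces $m'_{p,q}<m_p$, so both blocks attain the common value exactly once, whence $\{p,q\}$ solves $P$ while $\{p\}$ and $\{q\}$ do not.

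With this equivalence in hand the two directions are short. For ``regular $\Rightarrow$ structure'': clause (1) gives $\mathcal{C}_{\mathbb{Z}_{\geq0}}(P)=\emptyset$ and excludes singleton minimal solutions, so by Lemma~\ref{lem:minimal_general} every element of $\mu(\Sol(P))$ is a genuine pair $\{p,q\}$, with $p<k$ by Proposition~\ref{minimal}(i); if some such pair had $q<k$ the equivalence would yield $A_p(P)=A_q(P)$ with $0\le p<q<k$, contradicting clause (2), so every minimal solution has $p<k\le q$. For ``structure $\Rightarrow$ regular'': if all minimal solutions are pairs $\{p,q\}$ with $p<k\le q$, then no singleton solves $P$ (a singleton solution would itself be minimal), which is clause (1); and if $A_p(P)=A_q(P)$ held for some $0\le p<q\le k-1$, the equivalence would manufacture a minimal solution $\{p,q\}$ with both entries $<k$, contradicting the assumed shape, so $A_0(P),\dots,A_{k-1}(P)$ are distinct, which is clause (2).

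The step I expect to be the genuine obstacle is the ``if'' half of the key equivalence, i.e. converting the numerical coincidence $A_p(P)=A_q(P)$ into an honest minimal solution in the backward direction. This is where one must control the multiplicities with which the running minima are attained: under clause (1) one has to check that $m'_{p,q}<m_p$, so that the common value $A_p(P)=A_q(P)$ is realized exactly once in each block — in particular that no index $i\le p$ realizes the second block's minimum — whence $\{p,q\}$ is a solution but neither $\{p\}$ nor $\{q\}$ is. Once this multiplicity bookkeeping is pinned down the remaining implications are formal.
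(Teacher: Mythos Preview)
Your proof is correct and follows essentially the same strategy as the paper: translate clause~(1) into ``no singleton solutions'' and clause~(2) into ``no minimal pair $\{p,q\}$ with both entries below $k$,'' then read off the proposition from the classification in Theorem~\ref{thm:s_n=1}. Where you differ is in the level of detail: the paper dispatches the second equivalence in one line via the $\mathbb{B}$-homomorphism identity $trop_P(\{j_1,j_2\})=A_{j_1}(P)\oplus A_{j_2}(P)$ and only explicitly argues the direction ``$A_{j_1}(P)=A_{j_2}(P)\Rightarrow\{j_1,j_2\}$ is a minimal solution,'' leaving the converse implicit. Your normal form $b_i=a_i-i$, $m_j=\min_{i\le j}b_i$, $m'_{p,q}$ makes both directions of this key equivalence explicit, including the multiplicity bookkeeping (each running minimum is attained uniquely under clause~(1)) that the paper's one-liner suppresses. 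So your argument is a more careful unpacking of the same proof rather than a genuinely different route; the payoff is that the step you flagged as the ``genuine obstacle'' is fully justified in your version, whereas the paper treats it as evident.
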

\begin{proof}
    The first condition of Definition \ref{def_reg_n=1} is equivalent to the fact that $P$ has no monomial solutions (which implies that it is holonomic). Indeed, note that $A_j(P)=\min_{0\leq i\leq\min\{j,k\}}\{a_{i}+j-i\}$, thus $\{j\}\in\mu(\Sol(P))$ if and only if $P\in V(A_j)$.
    
    The second condition is equivalent to the fact that $P$ has no minimal binomial solutions $\{p,q\}$ with $0\leq p<q<k$. Indeed, for $0\le j_1\neq j_2\leq k-1$ we have $trop_P(\{j_1,j_2\})=trop_P(j_1)\oplus trop_P(j_2)=A_{j_1}(P)\oplus A_{j_2}(P)$ since $trop_P$ is a homomorphism, so $\{j_1, j_2\}$ is a solution of $P$ if $A_{j_1}(P)=A_{j_2}(P)$, and it is minimal since $P$ has already no (minimal) monomial solutions.
\end{proof}

\begin{remark}
\label{rem:trop_disc_n=1}
There exists a tropical algebraic set $\Delta(k)\subset \mathbb{T}^{k+1}$ such that $\Delta(k)\cap\mathbb{Z}^{k+1}$ consists of the TLDEs which are not regular. We call this $\Delta(k)$ the {\it tropical discriminant}. Indeed, the first condition of Definition \ref{def_reg_n=1} is directly a tropical prevariety (intersection of tropical hypersurfaces), and the second condition is also tropical algebraic by  Remark \ref{rem:wv_tropical}.
\end{remark}

\subsubsection{A criterion of existence of a nonzero solution of a univariate TLDE}
\label{Section:criterion}
In the last section we saw that if $P$ as in (\ref{1}) is regular, then $|\mu(Sol(P))|\leq k$. This leads to the question on whether or not a regular $P$ satisfies $\mu(Sol(P))=\emptyset$, which we study here.

For a fixed TLDE $P$, we use the whole sequence of polynomials $\{A_j\::\:j\in\mathbb{Z}_{\geq0}\}$ to define the following map.

\begin{definition}
\label{def:map_A}
  If $P=\bigoplus_{i=0}^k {a_i}\odot u^{(i)}$ we define a map 
\begin{equation*}
    \begin{aligned}
        A_{-}(P):\mathbb{Z}_{\geq0}&\xrightarrow[]{}\mathbb{Z}\\
        j&\mapsto trop_{P}(\{j\})=A_j(P)
    \end{aligned}
\end{equation*}
\end{definition}

The map $A_{-}(P)$ is determined by its values on  the interval $[0,k]\subset \mathbb{Z}_{\geq0}$:

\begin{equation*}
A_{j+1}(P)
    \begin{cases}
        \le A_{j}(P)+1,& 0\leq j<k,\\
        =A_j(P)+1=A_k(P)+j-k,&k\leq j.
    \end{cases}
\end{equation*}

\begin{theorem}
\label{existence}
\begin{enumerate}[label=\roman*)] 
    \item TLDE (\ref{1}) has no solutions (or equivalently, no minimal solutions) if and only if $a_i-i>a_0,\ 1\le i\le k.$
    \item TLDE (\ref{1}) has no minimal solutions of the form $\{j_1, j_2\},\ 0\le j_1\neq j_2$ if and only if $a_i-i\ge a_0,\ 0\le i\le k.$
\end{enumerate}
\end{theorem}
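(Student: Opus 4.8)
The plan is to analyze directly the combinatorial condition for the minimum in $A_j(P) = \min_{0\le i\le \min\{j,k\}}\{a_i + j - i\}$ to be attained once versus at least twice, recalling from \eqref{eq_pol_duality} that $\{j\}\in\mu(\Sol(P))$ if and only if $P\in V(A_j)$, i.e.\ if and only if the minimum defining $A_j(P)$ is attained at least twice. The key observation I would exploit is that for $j\ge k$ the formula stabilizes: $A_j(P) = \min_{0\le i\le k}\{a_i + j - i\}$ ranges over all indices $0\le i\le k$, so the comparison among the terms $a_i - i$ is independent of $j$. Thus for large $j$ the monomial solution question reduces to comparing the quantities $a_i - i$ for $0\le i\le k$, which is exactly where the inequalities $a_i - i > a_0$ and $a_i - i \ge a_0$ will come from (note $a_0$ is the $i=0$ term).

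For part i), I would argue as follows. By Lemma~\ref{lem:minimal_general} and Proposition~\ref{minimal}, every minimal solution is of the form $\{p\}$ or $\{p,q\}$; and by Proposition~\ref{p:holonomic}, a monomial solution $\{s\}$ with $s\ge k$ exists iff $\{k\}$ is a solution. So $P$ has \emph{no} solutions at all precisely when no single-element set and no two-element set is a solution. First I would show that $\{j\}$ is a solution for some $j$ iff the minimum of $\{a_i - i : 0\le i\le k\}$ (restricted appropriately for small $j$) is attained at least twice; the cleanest route is to observe that the condition $a_i - i > a_0$ for all $1\le i\le k$ says exactly that the index $i=0$ strictly and uniquely achieves the minimum of $a_i - i$ over $0\le i\le k$, hence for every $j$ the minimum defining $A_j(P)$ is attained uniquely (at $i=0$ when $j<k$ is accommodated by the $\min\{j,k\}$ truncation, and at $i=0$ for $j\ge k$), giving no monomial solutions. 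Then I must rule out binomial solutions $\{p,q\}$: here I would use that $trop_P$ is a homomorphism, so $trop_P(\{p,q\}) = A_p(P)\oplus A_q(P)$, and a binomial solution requires either one of $A_p(P),A_q(P)$ to already vanish (a monomial solution, excluded) or $A_p(P)=A_q(P)$ to be achieved with the global minimum realized across the two supports; a careful bookkeeping using the monotonicity relations for $A_{j+1}(P)$ versus $A_j(P)$ displayed just before the theorem shows the strict inequalities $a_i - i > a_0$ preclude this as well. Conversely, if some $a_i - i \le a_0$, I would exhibit an explicit solution by locating where the minimum of $a_i - i$ is attained at least twice or by constructing a suitable $\{p,q\}$.

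For part ii), the target is the weaker condition $a_i - i \ge a_0$ for all $0\le i\le k$, which now \emph{permits} ties. I would show that this inequality is equivalent to the nonexistence of minimal binomial solutions $\{j_1,j_2\}$ with $j_1\ne j_2$, while monomial solutions may still be allowed. The forward direction runs parallel to part i): the relations governing $A_{j+1}(P)$ in terms of $A_j(P)$ imply that once $i=0$ weakly minimizes $a_i - i$, no genuine two-point support can create a second independent place where the global minimum of $trop_P$ is realized without collapsing to a monomial. The converse would again be by contraposition, producing an explicit binomial solution when the inequality fails for some $i$.

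The main obstacle I anticipate is the binomial bookkeeping: translating the weak/strict inequalities on the $a_i - i$ into precise statements about when $trop_P(\{p,q\})$ attains its minimum twice requires separating the two supports contributing to the global minimum and handling the truncation $val_{\{j\}}(i)=\infty$ for $i>j$ carefully. I expect the monotonicity estimates for $A_{j+1}(P)$ versus $A_j(P)$ stated immediately before the theorem to be the crucial lever that tames this casework, reducing the binomial analysis to the monomial comparison of the numbers $a_i - i$.
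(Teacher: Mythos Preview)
Your plan is essentially the paper's approach: both arguments hinge on the sequence $A_j(P)$ and its increment behaviour $A_{j+1}(P)\le A_j(P)+1$, reducing the existence of binomial solutions to equalities $A_{j_1}(P)=A_{j_2}(P)$ and the existence of monomial solutions to ties among the $a_i-i$. Two organizational differences are worth noting. First, the paper proves ii) before i), which is cleaner: once you know that ``no minimal binomials'' forces $A_j(P)=a_0+j$ (hence $a_i-i\ge a_0$), part i) follows by observing that a tie $a_{i_0}-i_0=a_0$ with $i_0\ge 1$ makes $\{i_0\}$ a monomial solution, so ``no solutions at all'' upgrades the weak inequality to strict. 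Second, and more importantly, the paper's converse for ii) is made concrete by a single explicit construction you only allude to: if $A_{j_0+1}(P)\le A_{j_0}(P)$ at the first index $j_0$ where the increment drops, then $A_k(P)\le A_0(P)$ and $\{0,\ k+A_0(P)-A_k(P)\}$ is a binomial solution. This one line replaces the ``careful bookkeeping'' and the unspecified ``suitable $\{p,q\}$'' in your outline, and it is the only nontrivial step in the proof.
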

\begin{proof}
If $A_{j_1}(P)=A_{j_2}(P)$ for some $0\le j_1\neq j_2$ then $\{j_1, j_2\}$ is a solution of (\ref{1}).

First assume that (\ref{1}) has no solutions of the form $\{j_1, j_2\}, 0\le j_1\neq j_2$. We claim that $A_{j+1}(P)=A_j(P)+1,\ j\ge 0.$ Suppose the contrary. Take the minimal integer $j_0$ such that $A_{j_0+1}(P)\le A_{j_0}(P),$ clearly $j_0<k$. It holds that $A_j(P)\le A_0(P)=a_0,\ j\ge j_0+1$ due to the supposition. Hence $A_k(P)\le A_0(P)$. Therefore $\{0, k+A_0(P)-A_k(P)\}$ is a solution of (\ref{1}). This justifies the claim.

Thus $A_j(P)=a_0+j,\ j\ge 0$. This implies that $a_i-i\ge a_0,\ 0\le i\le k$. Conversely, if the latter inequalities are valid then $A_j(P)=a_0+j,\ j\ge 0.$  Hence (\ref{1}) has no solutions of the form $0\le j_1\neq j_2$, which completes the proof of ii).

Now assume that (\ref{1}) has no solutions. To justify the strict inequalities in i) suppose the contrary and let $i_0\ge 1$ be the minimal integer such that $a_{i_0}-i_0=a_0$. Then $\{i_0\}$ is a minimal solution of (\ref{1}). This proves i) in one direction: if (\ref{1}) has no solutions then $a_i-i>a_0,\ 1\le i\le k.$
Conversely, assume the latter inequalities. Then $A_j(P)=a_0+j,\ j\ge 0$. No singleton $\{j\}$ is a solution of (\ref{1}) since the minimum in the definition of $A_j$ is attained for a single $i=0$, which proves i).
\end{proof}

\subsubsection{Tropicalization of power series solutions of LDE at $\infty$}

Remind that one can treat a tropical solution of (\ref{1}) as the tropicalization of the initial part of a power series solution $w=\sum_{i\ge 0} b_it^i$ of a classical LDE \eqref{CLDE}. One can view this power series as an expansion in a neighborhood of 0. Alternatively, one can consider a power series $w=\sum_{i\ge 0} c_i t^{-i}$ solution of \eqref{CLDE}
as an expansion in a neighborhood of $\infty$. This leads to the following definition.

\begin{definition}
For a subset $S\subset \ZZ_{\le 0}$ introduce a valuation
\begin{equation*}
    val_S^{(\infty)}(i)=\begin{cases}
        \max \{S\},&\text{ if }i=0,\\
        \max \{s-i\ |\ -1\ge s\in S\},&\text{ if } 1\le i\le k.\\
    \end{cases}
\end{equation*}

\noindent We say that $S$ is a {\it tropical solution at $\infty$} of (\ref{1}) if the maximum in 
$$\max_{0\le i\le k} \{a_i+val_S^{(\infty)}(i)\}$$
is attained at least for two values of $i$.
\end{definition}

\begin{lemma}\label{infty}
A minimal tropical solution $S$ at $\infty$ of (\ref{1}) can be one of two following types:
\begin{enumerate}[label=\roman*)]
    \item $S=\{-r\}$ for an arbitrary $r\ge 1$ iff the maximum in $\max_{0\le i\le k} \{a_i-i\}$ is attained at least twice;
    \item $S=\{0, -r\}$ iff $1\le r:= \max_{0\le i\le k} \{a_i-i\}-a_0$.
\end{enumerate}
\end{lemma}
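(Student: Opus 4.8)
The plan is to analyze the maximum condition $\max_{0\le i\le k}\{a_i+val_S^{(\infty)}(i)\}$ directly, mirroring the structure of the analysis done for solutions at $0$ (as in Proposition~\ref{minimal} and Theorem~\ref{existence}), but adapted to the fact that we now work with subsets of $\ZZ_{\le 0}$ and take maxima rather than minima. First I would observe that the definition of $val_S^{(\infty)}$ separates the index $i=0$ (which only sees $\max\{S\}$) from the indices $1\le i\le k$ (which only see the strictly negative part of $S$). A minimal solution at $\infty$ should, by the same reasoning as Lemma~\ref{lem:minimal_general}, have at most two elements, since attaining the maximum twice requires only two indices $i$, each of which is witnessed by a single element of $S$. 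So I would first reduce to the cases $S=\{-r\}$ and $S=\{0,-r\}$ (with $0\le\text{(one element)}$ and $-r\le -1$), checking that no minimal solution needs two strictly negative elements.

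For type i), $S=\{-r\}$ with $r\ge 1$: here $\max\{S\}=-r$ and for $1\le i\le k$ we have $val_S^{(\infty)}(i)=-r-i$. The key computation is that $a_0+val_S^{(\infty)}(0)=a_0-r$ while for $i\ge 1$ we get $a_i-r-i$, so every term equals $a_i-i-r$ with the $i=0$ term being $a_0-r$; factoring out the common shift $-r$, the maximum $\max_{0\le i\le k}\{a_i-i-r\}$ is attained twice if and only if $\max_{0\le i\le k}\{a_i-i\}$ is attained twice, independently of $r$. This gives the stated iff and explains why $r\ge 1$ is arbitrary. The main point to verify carefully is the indexing subtlety: for $i=0$ the valuation is $\max\{S\}=-r$, not $-r-0$, but since these coincide when $S=\{-r\}$, the uniform formula $a_i-i-r$ holds across all $i$, including $i=0$.

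For type ii), $S=\{0,-r\}$ with $r\ge 1$: now $\max\{S\}=0$, so the $i=0$ term is $a_0+0=a_0$. For $1\le i\le k$, the relevant element of $S$ that is $\le -1$ is $-r$, giving $val_S^{(\infty)}(i)=-r-i$ and term $a_i-r-i$. Attaining the maximum twice then requires $a_0=\max_{1\le i\le k}\{a_i-i-r\}$ with this value being the global maximum, i.e.\ $a_0=\max_{1\le i\le k}\{a_i-i\}-r$ and $a_0$ at least this large among all terms. I would show this is equivalent to $r=\max_{0\le i\le k}\{a_i-i\}-a_0$ being a positive integer: the $i=0$ term $a_0-i=a_0$ contributes $a_0$ to the inner max $\max_{0\le i\le k}\{a_i-i\}$, so when $r\ge 1$ the maximum over $i$ of $a_i-i$ is achieved at some $i\ge 1$ (not forced to $0$), and solving $a_0=\max_{1\le i\le k}\{a_i-i\}-r$ yields exactly $r=\max_{0\le i\le k}\{a_i-i\}-a_0$, with minimality ruling out the singleton when this is strictly positive.

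The main obstacle I anticipate is handling the asymmetric role of the $i=0$ index cleanly, and ensuring the minimality argument correctly separates the two types: a solution $\{0,-r\}$ should not be minimal if either $\{0\}$ or $\{-r\}$ already attains the max twice, so I must check that the type ii) condition $r\ge 1$ genuinely excludes the degenerate overlap with type i). Establishing that exactly these two families exhaust all minimal solutions—using that two negative elements are never needed and that the $i=0$ term behaves as a threshold—is the crux; the individual maximum computations are routine once the reduction is in place.
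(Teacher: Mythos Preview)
Your proposal is correct and follows essentially the same approach as the paper's proof: reduce to at most two elements (as in Proposition~\ref{minimal}), then split into the cases $0\notin S$ (giving $S=\{-r\}$ and the uniform computation $a_i-i-r$) and $0\in S$ (noting $\{0\}$ is never a solution, so $S=\{0,-r\}$ and the $i=0$ term $a_0$ must match $\max_{1\le i\le k}\{a_i-i\}-r$). Your write-up is in fact more explicit than the paper's, particularly in flagging the minimality check that $\{0,-r\}$ should not contain a proper solution $\{-r\}$; the paper's proof is terse on this point.
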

\begin{proof}
    Similar to Proposition~\ref{minimal} a minimal solution $S\subset \ZZ_{\le 0}$ at $\infty$ consists of at most of two elements. If $0\notin S$ then $S=\{-r\}$ for an arbitrary $r\ge 1$. In this case  the maximum in $\max_{0\le i\le k} \{a_i-i\}$ is attained at least twice. The converse is also true.

Now let $0\in S$. Clearly, $\{0\}$ is not a solution at $\infty$ of (\ref{1}). Therefore $S=\{0, -r\},\ r\ge 1$ and $a_0=\max_{0\le i\le k} \{a_i-i\}-r$. Conversely, if $r:= \max_{0\le i\le k} \{a_i-i\}-a_0 \ge 1$ then $\{0, -r\}$ is a minimal solution at $\infty$ of (\ref{1}).
\end{proof}

\begin{corollary}
\begin{enumerate}[label=\roman*)]
    \item TLDE (\ref{1}) has either a tropical solution or a tropical solution at $\infty$. 

\item If (\ref{1}) has only minimal tropical solutions being singletons then (\ref{1}) has a minimal tropical solution at $\infty$ of the form $\{-r\}$ for any $r\ge 1$.
\end{enumerate}
\end{corollary}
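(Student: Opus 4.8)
The plan is to read both parts off the thresholds already recorded for solutions at $0$ (Theorem~\ref{existence}, Proposition~\ref{p:holonomic}) and at $\infty$ (Lemma~\ref{infty}), all of which are governed by the single ``profile'' $(a_i-i)_{0\le i\le k}$. Throughout I would fix the notation $M:=\max_{0\le i\le k}(a_i-i)$ and $m:=\min_{0\le i\le k}(a_i-i)$ and keep careful track of where each extremum is attained. The unifying remark is that, by Lemma~\ref{infty}, a minimal solution at $\infty$ of type $\{-r\}$ (for every $r\ge1$) exists exactly when $M$ is attained at least twice, while one of type $\{0,-r\}$ exists exactly when $M>a_0$; by contrast the existence of solutions at $0$ is controlled by how the profile sits relative to $a_0=a_0-0$.

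For part i) I would argue by dichotomy. If (\ref{1}) has a tropical solution we are done. Otherwise (\ref{1}) has no solution, so Theorem~\ref{existence} i) gives $a_i-i>a_0$ for all $1\le i\le k$; since $k\ge1$ there is at least one such index, whence $M\ge a_1-1>a_0$ and $r:=M-a_0$ is a positive integer. Lemma~\ref{infty} ii) then exhibits $\{0,-r\}$ as a minimal tropical solution at $\infty$. Thus the absence of a solution at $0$ forces one at $\infty$, which is precisely statement i) (nothing is needed in the other direction, as the disjunction is satisfied as soon as a solution at $0$ exists).

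For part ii) the target, by Lemma~\ref{infty} i), is to show that the hypothesis forces $M$ to be attained at least twice. First I would translate the hypothesis: ``all minimal solutions are singletons'' means there is no minimal solution $\{j_1,j_2\}$ with $j_1\ne j_2$, so Theorem~\ref{existence} ii) yields $a_i-i\ge a_0$ for all $i$, i.e.\ $m=a_0$, attained at $i=0$. Since (\ref{1}) does possess (singleton) minimal solutions it is non-holonomic, so Proposition~\ref{p:holonomic} produces a second index $1\le i_0\le k$ with $a_{i_0}-i_0=a_0$; hence $m=a_0$ is attained at least twice. It then remains to pass from this information about the \emph{minimum} of the profile to the assertion that its \emph{maximum} is attained twice.

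I expect this last passage to be the main obstacle, and it is where I would spend the most care. The two conditions are a priori independent: ``$m$ attained twice'' pins down only the bottom of $(a_i-i)$, whereas $\{-r\}$ at $\infty$ responds to its top, so nothing yet prevents the profile from climbing to a strict, uniquely attained peak above $a_0$. The configuration in which the conclusion genuinely holds is the one where $(a_i-i)$ is constant equal to $a_0$ (equivalently, the minimal solutions are \emph{all} of $\{1\},\{2\},\dots$, so the maximum repeats trivially). My plan would therefore be to determine whether the singleton hypothesis, as intended, isolates exactly this constant-profile configuration — for instance by invoking the reversal symmetry $t\mapsto 1/t$, which interchanges the expansions at $0$ and $\infty$ and swaps $\min$ with $\max$, thereby pairing ``non-holonomic tail at $0$'' (a repeated minimum, Proposition~\ref{p:holonomic}) with ``$\{-r\}$ at $\infty$'' (a repeated maximum, Lemma~\ref{infty} i) — or whether an additional regularity assumption is required. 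Clarifying precisely how the singleton hypothesis at $0$ transports to the repeated-maximum condition at $\infty$ is the step that I believe carries the real content of ii).
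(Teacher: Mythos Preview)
Your treatment of part~i) is correct and matches the paper's argument exactly: Theorem~\ref{existence}~i) gives $a_i-i>a_0$ for all $1\le i\le k$ when there is no solution at~$0$, so $r=M-a_0\ge 1$ and Lemma~\ref{infty}~ii) supplies the solution $\{0,-r\}$ at~$\infty$.

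For part~ii), you have put your finger on a genuine problem, and your caution is fully warranted. The paper's own proof proceeds exactly as you outline up to the point you flag: from Theorem~\ref{existence} one gets $a_i-i\ge a_0$ for all~$i$ together with some $1\le i_0\le k$ satisfying $a_{i_0}-i_0=a_0$, so the \emph{minimum} of the profile $(a_i-i)$ is attained at least twice. The paper then invokes Lemma~\ref{infty}~i) directly---but that lemma requires the \emph{maximum} of the profile to be attained twice, and no argument is given for this passage. Your instinct that this is where the content lies is correct; in fact the step fails. Take $k=2$, $(a_0,a_1,a_2)=(0,1,5)$, so the profile is $(0,0,3)$. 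One checks that the minimal solutions of~(\ref{1}) are exactly the singletons $\{j\}$ for $j\ge 1$, yet for $S=\{-r\}$ one has $\max_i\{a_i+val_S^{(\infty)}(i)\}=\max\{-r,-r,3-r\}=3-r$, attained only at $i=2$, so $\{-r\}$ is \emph{not} a solution at~$\infty$. Thus the corollary as stated is false, and neither the paper's proof nor any completion of your sketch can succeed without an additional hypothesis (e.g.\ that the profile is constant).

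One minor point: your route to ``there exists $i_0\ge 1$ with $a_{i_0}-i_0=a_0$'' via non-holonomicity and Proposition~\ref{p:holonomic} is a detour. The direct way (which the paper uses) is simply that the existence of \emph{some} solution rules out the strict inequalities of Theorem~\ref{existence}~i), forcing $a_{i_0}-i_0\le a_0$ for some $i_0\ge 1$, hence equality by Theorem~\ref{existence}~ii).
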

\begin{proof}
    \begin{enumerate}[label=\roman*)]
    \item Let (\ref{1}) have no tropical solutions. Then $a_i-i>a_0,\ 1\le i\le k$ due to Theorem~\ref{existence}~i). Hence (\ref{1}) has a (minimal) tropical solution $\{0, a_0-\max_{0\le i\le k} \{a_i-i\}\}$ at $\infty$ because of Lemma~\ref{infty}~ii).
    \item Due to Theorem~\ref{existence} it holds $a_i-i\ge a_0,\ 0\le i\le k$, and there exists $1\le i\le k$ such that $a_i-i=a_0$. Therefore (\ref{1}) has a minimal tropical solution $\{-r\}$ at $\infty$ for an arbitrary $r\ge 1$ because of Lemma~\ref{infty}~i).\qedhere
    \end{enumerate}
\end{proof}

\subsection{The  case $n>1$}
In this Section we assume that $n>1$ and $P$ is as in \eqref{22_P} and has  fixed differential order $\mathbf{k}=(k_1,\ldots,k_n)\in(\mathbb{Z}_{>0})^n$. As usual, we write $P=\bigoplus_jP_j(u_j)$ with $P_j(u_j)$ of differential order $k_j\in \mathbb{Z}_{>0}$ for $j\in[n]$. 

The main result of this Section is a complete description of the parts $\mu(\Sol(P))=\mathcal{C}_{\mathbb{Z}_{\geq0}}(P)\sqcup F(P)$ from Definition \ref{eq:decompo}; this is a generalization of Theorem \ref{thm:s_n=1}.

\begin{definition}
   If $t_i^p+t_j^q=S\in\mu(\Sol(P))$ (see Lemma \ref{lem:minimal_general}), we define its support $Supp(S)=\{i,j\}\subset [n]$. 
\end{definition}  

\begin{theorem}\label{thm:form_finite_part}
       For $P$  as before, the set $F(P)$ is finite. Furthermore, if $S\in F(P)$, then either 
\begin{enumerate}
\item $Supp(S)=\{j\}$ if and only if  $S\in F(P_j)$,
\item $Supp(S)=\{i,j\}$ if and only if  $S=t_i^p+t_j^q$ satisfies
\begin{enumerate}
    \item $(p,q)<(k_i,k_j)$,
    \item $p<k_i$ and $q=q(p)\geq k_j$ is unique.
\end{enumerate}
\end{enumerate}      
  \end{theorem}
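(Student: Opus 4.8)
The plan is to combine Lemma \ref{lem:minimal_general}, which forces every minimal solution to have the shape $S=t_i^p+t_j^q$, with the decomposition $\mu(\Sol(P))=\mathcal{C}_{\mathbb{Z}_{\geq0}}(P)\sqcup F(P)$ of Definition \ref{eq:decompo}, and then to run a case analysis on the support $\mathrm{Supp}(S)=\{i,j\}$, treating the one-element and two-element cases separately. The conditions (a) and (b) are read as the two alternatives (both exponents below threshold, versus exactly one below threshold), exactly paralleling parts (a) and (b) of Theorem \ref{thm:s_n=1}(ii).

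First I would dispose of the single-support case $\mathrm{Supp}(S)=\{j\}$. Here $S$ lives in the single variable $t_j$, so the Remark following Definition \ref{eq:decompo} gives the equivalence $S\in\mu(\Sol(P))\Leftrightarrow S\in\mu(\Sol(P_j))$. The decisive point is that $S_l=\emptyset$ for $l\neq j$ forces $\mathrm{ord}(S_l)=\infty\geq k_l$, so the membership condition $S\in\mathcal{C}_{\mathbb{Z}_{\geq0}}(P)$ collapses to the single inequality $\mathrm{ord}(S_j)\geq k_j$, which is precisely the condition defining $\mathcal{C}_{\mathbb{Z}_{\geq0}}(P_j)$ in the $n=1$ theory. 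Removing these sets from the respective minimal-solution sets yields $S\in F(P)\Leftrightarrow S\in F(P_j)$, which is the biconditional in part (1); the converse direction is bookkeeping, since any element of $F(P_j)$ is by construction a single-variable minimal solution of order $<k_j$, hence lands in $F(P)$ with support $\{j\}$.

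Next, for $\mathrm{Supp}(S)=\{i,j\}$ with $i\neq j$, the two monomials are singletons, so $\mathrm{ord}(S_i)=p$ and $\mathrm{ord}(S_j)=q$. Membership $S\in F(P)$ means exactly that $(p,q)$ fails the condition ``$p\geq k_i$ and $q\geq k_j$'' defining $\mathcal{C}_{\mathbb{Z}_{\geq0}}(P)$, leaving three surviving cases. If both $p<k_i$ and $q<k_j$, we are in case (a). If exactly one exponent is below its threshold, say $p<k_i$ and $q\geq k_j$ (the other situation being symmetric under $i\leftrightarrow j$, $p\leftrightarrow q$), then Proposition \ref{minimal}(ii) applies with this fixed $p$ and shows that the large exponent $q=q(p)$ is uniquely determined, which is case (b). The case $p\geq k_i$ and $q\geq k_j$ is excluded precisely because it belongs to $\mathcal{C}_{\mathbb{Z}_{\geq0}}(P)$. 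The converse direction is again immediate, as any binomial $t_i^p+t_j^q$ with $i\neq j$ has support $\{i,j\}$.

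Finiteness of $F(P)$ then follows by counting: there are finitely many unordered pairs $\{i,j\}\subseteq[n]$; for each, case (a) contributes at most $k_ik_j$ binomials, while case (b) contributes at most one $q$ for each of the admissible values $p<k_i$ (and symmetrically), hence at most $k_i+k_j$ binomials; and the single-support contributions are bounded by $\sum_j|F(P_j)|<\infty$ using Theorem \ref{thm:s_n=1}(ii). I expect the main obstacle to be the two-support analysis, specifically checking that Proposition \ref{minimal}(ii) is genuinely applicable in the mixed case and delivers the uniqueness $q=q(p)$, together with verifying that the three listed cases are exhaustive once $\mathcal{C}_{\mathbb{Z}_{\geq0}}(P)$ has been excised; the rest of the argument is organizational.
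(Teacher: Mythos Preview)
Your proposal is correct and follows essentially the same route as the paper: reduce the single-support case to $F(P_j)$ via the identification $\mu(\Sol(P))\ni S\leftrightarrow S\in\mu(\Sol(P_j))$ and invoke Theorem~\ref{thm:s_n=1} for finiteness, then for two-element support run the trichotomy on $(p,q)$ relative to $(k_i,k_j)$, excise the case $(p,q)\ge(k_i,k_j)$ as belonging to $\mathcal{C}_{\mathbb{Z}_{\ge0}}(P)$, and appeal to Proposition~\ref{minimal}(ii) for the uniqueness $q=q(p)$ in the mixed case. The paper's proof is terser but structurally identical; your explicit finiteness count and the bookkeeping for the biconditionals are welcome elaborations rather than a different strategy.
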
    
   \begin{proof}
    It is clear that $Supp(S)=\{j\}$ iff $S\in F(P_j)$, and $\bigcup_jF(P_j)$ is finite by Theorem \ref{thm:s_n=1}. If $Supp(S)=\{i,j\}$ with $\varphi=t_i^p+t_j^q$, then we have a trichotomy:
\begin{enumerate}
    \item $(p,q)<(k_i,k_j)$, 
    \item $p<k_i$ and $q\geq k_j$, 
    \item $(p,q)\geq (k_i,k_j)$,
\end{enumerate} 
and we are ruling out the third possibility. So we just need to show that there is a finite number of  possibilities (2), but this is just Proposition \ref{minimal} ii).
\end{proof} 

We will study the set $\mathcal{C}_{\mathbb{Z}_{\geq0}}(P)$ for the case $n>1$ in the next section.  Contrary to the case $n=1$, we will see that it is always non-empty (it is in fact, infinite).

\subsubsection{Structure of the set $\mathcal{C}_{\mathbb{Z}_{\geq0}}(P)$}
Let $n>1$ and $P=P_1(u_1)\oplus P_2(u_2)\oplus \cdots\oplus P_n(u_n)$. The main result of this Section is a complete characterization of  $\mathcal{C}_{\mathbb{Z}_{\geq0}}(P)$. 

 Consider the order map $ord:\mathcal{P}(\mathbb{Z}_{\geq0})^n\xrightarrow[]{}\mathbb{T}^n$ sending $S=(S_1,\ldots,S_n)$ to $ord(S)=(ord(S_1),\ldots,ord(S_n))$. This is an (order-reversing) homomorphism of $\mathbb{B}$-semimodules  that induces a bijection of sets 
\begin{equation}
    \mathcal{P}(\mathbb{Z}_{\geq0})^n\supset\mathcal{C}_{\mathbb{Z}_{\geq0}}(P)\xrightarrow[]{\sim}ord(\mathcal{C}_{\mathbb{Z}_{\geq0}}(P))\subset (\mathbb{Z}\cup\{\infty\})^n
\end{equation}

Indeed, we know that if $Supp(S)=\{i\}$, then $S=t_i^p$ is a monomial by Theorem \ref{thm:s_n=1}, so $ord(S)=(\infty,\ldots,p,\ldots,\infty)$ for some $p\geq k_i$, and if $Supp(S)=\{i,j\}$, then $S=t_i^p+t_j^q$, and $ord(S)=(\infty,\ldots,p,\ldots,q,\ldots,\infty)$ for some $p\geq k_i$ and $q\geq k_j$.

For the rest of this section we will use this identification.

\begin{theorem}
\label{thm_val_mat}
    Let $P$ be a TLDE with $n>1$. 
    The set $\mathcal{C}_{\mathbb{Z}_{\geq0}}(P)$ satisfies
    \begin{enumerate}[label=\roman*)]
        \item $(\infty,\ldots,\infty)\notin \mathcal{C}_{\mathbb{Z}_{\geq0}}(P)$,
        \item if $S\in\mathcal{C}_{\mathbb{Z}_{\geq0}}(P)$ and $i\in \mathbb{Z}_{\geq0}$, then $i\odot S\in \mathcal{C}_{\mathbb{Z}_{\geq0}}(P)$,
        \item if $S,T\in\mathcal{C}_{\mathbb{Z}_{\geq0}}(P)$ with $Supp(S)\neq Supp(T)$, then $Supp(S)\not\subset Supp(T)$,
        \item if $S,T\in\mathcal{C}_{\mathbb{Z}_{\geq0}}(P)$ and $i,j\in [n]$ with $s_i=t_i\neq\infty$ and $s_j<t_j$, there exists $U\in \mathcal{C}_{\mathbb{Z}_{\geq0}}(P)$ such that $u_i=\infty$, $u_j=s_j$ and $U\geq \min\{S,T\}$.
    \end{enumerate}
\end{theorem}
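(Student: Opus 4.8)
The plan is to treat the four assertions in increasing order of difficulty, working throughout with the identification $S\leftrightarrow ord(S)\in(\mathbb{Z}\cup\{\infty\})^n$ recorded before the statement; thus an element of $\mathcal{C}_{\mathbb{Z}_{\geq0}}(P)$ is, by Lemma~\ref{lem:minimal_general}, the exponent vector of a monomial $t_a^p$ (with $p\geq k_a$) or of a binomial $t_a^p+t_b^q$ (with $a\neq b$, $p\geq k_a$, $q\geq k_b$), and I write $s_l$ for the $l$-th coordinate of $ord(S)$. Assertion i) is immediate, since $(\infty,\ldots,\infty)=ord(\emptyset,\ldots,\emptyset)$ is the zero tuple, which is excluded from $\mu(\Sol(P))\supseteq\mathcal{C}_{\mathbb{Z}_{\geq0}}(P)$ by definition.

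For ii) the key observation is that on the region $ord(S)\geq\mathbf{k}$ every nonempty $S_l=\{p_l\}$ satisfies $val_{S_l}(r)=p_l-r$ for all $0\leq r\leq k_l$, so that $trop_P(i\odot S)=trop_P(S)+i$: the uniform shift moves every competing term by the same amount $i$. Hence the ``attained-at-least-twice'' condition survives $i\odot(\,\cdot\,)$, and $i\odot S$ is again a solution; minimality is preserved because a proper sub-monomial $t_a^{p+i}$ of $i\odot S$ is a solution if and only if $t_a^p$ is (shift back down by $i$, legal since $p\geq k_a$), and the latter are not solutions as $S$ is minimal. Thus $i\odot S\in\mathcal{C}_{\mathbb{Z}_{\geq0}}(P)$. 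For iii), suppose $Supp(S)\subsetneq Supp(T)$; since supports have size one or two this forces $Supp(S)=\{l\}$ and $Supp(T)=\{l,l'\}$, so $S=t_l^p$ is a monomial solution and $T=t_l^q+t_{l'}^r$ with $q\geq k_l$. A monomial solution means $P_l$ is non-holonomic, so by Proposition~\ref{p:holonomic} the singleton $\{m\}$ solves $P_l$ for every $m\geq k_l$; in particular $t_l^q$ is a nonzero solution strictly contained in $T$, contradicting the minimality of $T$.

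The real work is iv), a circuit-elimination statement removing the common monomial $t_i^{s_i}$ (recall $s_i=t_i$). The first step I would isolate is the \emph{tie lemma}: if $t_a^p+t_b^q$ ($a\neq b$) is a minimal solution of order $\geq\mathbf{k}$, then since $trop_P$ is a homomorphism $trop_P(t_a^p+t_b^q)=A_{p,a}(P)\oplus A_{q,b}(P)$, and minimality forbids either monomial from solving $P$ alone, forcing the equality $A_{p,a}(P)=A_{q,b}(P)$. Combining this with the identity $A_{m,l}(P)=A_{k_l,l}(P)+(m-k_l)$ for $m\geq k_l$ (so $A_{\cdot,l}(P)$ is strictly increasing on $[k_l,\infty)$), I would pin down the shapes of $S$ and $T$. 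Since $s_i\neq\infty$ and $s_j<t_j$ force $s_j$ finite and $i\neq j$, we get $S=t_i^{s_i}+t_j^{s_j}$ with $A_{s_i,i}(P)=A_{s_j,j}(P)$.

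For $T$: a finite $t_j$ would give $Supp(T)=\{i,j\}$ and hence $A_{s_j,j}(P)=A_{t_j,j}(P)$, impossible by strict monotonicity since $s_j<t_j$; and $T=t_i^{s_i}$ would be a solution strictly contained in $S$, contradicting minimality of $S$. Hence $t_j=\infty$ and $T=t_i^{s_i}+t_m^{t_m}$ for a unique $m\notin\{i,j\}$, with $A_{s_i,i}(P)=A_{t_m,m}(P)$. Transitivity then gives $A_{s_j,j}(P)=A_{t_m,m}(P)$, so I propose $U:=t_j^{s_j}+t_m^{t_m}$: its two terms tie, so $U$ solves $P$; neither term solves $P$ alone (else $S$ or $T$ would not be minimal), so $U$ is minimal; its orders are $\geq\mathbf{k}$; and coordinatewise $u_i=\infty$, $u_j=s_j$, and $U\geq\min\{S,T\}$ (indeed $U\subseteq S\cup T$ with the $i$-term deleted). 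I expect the case analysis locating $T=t_i^{s_i}+t_m^{t_m}$---especially using strict monotonicity of $A_{\cdot,j}(P)$ to exclude a binomial $T$ sharing the support of $S$---to be the delicate point; once the common monomial lies in distinct supports $\{i,j\}$ and $\{i,m\}$, the eliminated circuit is forced to be $t_j^{s_j}+t_m^{t_m}$ and the remaining checks are routine.
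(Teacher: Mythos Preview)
Your argument is correct. Parts i)--iii) align with the paper's (you supply more detail---for instance, in iii) you spell out the appeal to Proposition~\ref{p:holonomic}, while the paper says only ``follows from minimality'').

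For iv) you take a genuinely different route. The paper argues toward vacuity of the hypothesis: writing $S=t_i^p+t_j^q$ and $T=t_i^p+t_j^r$ with $r\in[q{+}1,\infty]$, it rules out $r=\infty$ by minimality of $S$ and finite $r$ by a direct term comparison (essentially your strict-monotonicity argument for $A_{\cdot,j}(P)$ on $[k_j,\infty)$). As written this covers only the situation $Supp(T)\subseteq\{i,j\}$; the possibility $T=t_i^{s_i}+t_m^{t_m}$ with $m\notin\{i,j\}$---which is genuinely non-vacuous once $n\geq 3$---is not treated there. You instead carry out the full trichotomy and, in the surviving case, explicitly construct the eliminated circuit $U=t_j^{s_j}+t_m^{t_m}$ via your ``tie lemma'' $A_{s_i,i}(P)=A_{s_j,j}(P)=A_{t_m,m}(P)$ and then verify minimality, $u_i=\infty$, $u_j=s_j$, and $U\geq\min\{S,T\}$ directly. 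Your approach is therefore more complete and makes the valuated-circuit elimination axiom visible; the paper's is terser but leaves the $m\neq j$ case to the reader.
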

\begin{proof}
Point i) is equivalent to $(\emptyset,\ldots,\emptyset)\notin\mathcal{C}_{\mathbb{Z}_{\geq0}}(P)$. For point ii), if $S\in\mathcal{C}_{\mathbb{Z}_{\geq0}}(P)$, then $i\odot S$ for $i\in \mathbb{Z}_{\geq0}$ represents $t^i\cdot S$, which is once again in $\mathcal{C}_{\mathbb{Z}_{\geq0}}(P)$. Point iii) follows from minimality. 

    The proof of  iv) is similar to Proposition \ref{minimal} ii), which uses (\ref{2}): there exist $S=t_i^p+t_j^q$ with $p,q<\infty$ and $T=t_i^p+t_j^r$ with $r\in[q+1,\infty]$, but this is not possible, since  $r=\infty$ contradicts the minimality of $S$, and if $r\neq\infty$, we have 
    \begin{eqnarray*}
        a_{j,s}+q-s=trop_P(S)=a_{i,l}+p-l=trop_P(T)=a_{j,t}+r-t
    \end{eqnarray*}
    but $k_j\leq q<r$, and $a_{j,t}+q-t<a_{j,s}+q-s$, which contradicts the fact that $S$ is a solution. Thus, for any given  $S=t_i^p+t_j^q$, such $T=t_i^p+t_j^r$ does not exist.
\end{proof}

\begin{remark}
    In Theorem \ref{thm_val_mat}, if in point ii) we change ``$i\in \mathbb{Z}_{\geq0}$'' by ``$i\in \mathbb{R}$'', then the resulting object $\mathcal{C}_{\mathbb{R}}(P)$ is a collection of valuated circuits, and the pair $([n],\mathcal{C}_{\mathbb{R}}(P))$ is then called a valuated matroid (or tropical linear space). Thus Theorem \ref{thm_val_mat} says then that  $\mathcal{C}_{\mathbb{Z}_{\geq0}}(P)$  is contained in the valuated matroid $([n],\mathcal{C}_{\mathbb{R}}(P))$.
\end{remark} 
    
By point ii),  the monoid $(\mathbb{Z}_{\geq0},\odot,0)$ acts on $\mathcal{C}_{\mathbb{Z}_{\geq0}}(P)$. We will see below that the set $\mathcal{C}_{\mathbb{Z}_{\geq0}}(P)$ can be recovered from the quotient set $\mathcal{C}_{\mathbb{Z}_{\geq0}}(P)/\mathbb{Z}_{\geq0}$.

We denote by $\mathcal{C}(P)\in\mathcal{P}([n])$  the family of supports of elements of $\mathcal{C}_{\mathbb{Z}_{\geq0}}(P)$, and by $L(P):=\{j\in [n]\::\:P_j\in V(A_{k_j})\}$.
\begin{convention}
    Even if $A_{k_j,j}$ is a tropical linear polynomial for $j\in [n]$ (see Definitions \ref{def:linpols_n>1}, \ref{def:map_A}), sometimes we will denote the common value $A_{k_j,j}(P)=A_{k_j,j}(P_j)=\min_{0\leq i\leq k_j}\{a_{i,j}+k_j-i\}$ just by $A_{k_j,j}$ to simplify the notation.
\end{convention}

\begin{corollary}
\label{eq:circuits_n>1}
    The pair $M(P)=([n],\mathcal{C}(P))$ is a matroid (of circuits). Moreover, we have $\mathcal{C}(P):=L(P)\cup\binom{[n]\setminus L(P)}{2}$, 
thus $L(P)$ is the set of loops of  $M(P)$.
\end{corollary}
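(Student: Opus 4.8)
The plan is to reduce the whole statement to the single combinatorial identity $\mathcal{C}(P)=L(P)\cup\binom{[n]\setminus L(P)}{2}$ for the family of supports, and then to read off both the matroid axioms and the loops directly from it. First I would pin down the singleton supports. By the remark that $\bigcup_j\mu(\Sol(P_j))\subseteq\mu(\Sol(P))$, a support $\{j\}$ occurs exactly when some monomial $t_j^p$ with $p\geq k_j$ is a minimal solution, i.e. when $\mathcal{C}_{\mathbb{Z}_{\geq0}}(P_j)\neq\emptyset$; by Theorem~\ref{thm:s_n=1}~i) applied to $P_j$ this happens iff $P_j\in V(A_{k_j})$, i.e. iff $j\in L(P)$. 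Hence the singletons of $\mathcal{C}(P)$ are exactly $L(P)$.

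Next I would determine the pair supports. By Lemma~\ref{lem:minimal_general} every element of $\mathcal{C}_{\mathbb{Z}_{\geq0}}(P)$ with two-element support is a binomial $t_i^p+t_j^q$ with $p\geq k_i$, $q\geq k_j$. If $i\in L(P)$ then $t_i^p$ is itself a solution of $P$ (it lies in $\mu(\Sol(P_i))\subseteq\mu(\Sol(P))$), so $t_i^p\subsetneq t_i^p+t_j^q$ forbids minimality; thus no pair support can meet $L(P)$, giving $\mathcal{C}(P)\cap\binom{[n]}{2}\subseteq\binom{[n]\setminus L(P)}{2}$. For the reverse inclusion I would use that for $i\notin L(P)$ the minimum in $trop_{P_i}(t_i^p)=(p-k_i)+A_{k_i,i}(P)$ is attained only once (since $P_i\notin V(A_{k_i})$); hence, given $i,j\notin L(P)$ with say $A_{k_j,j}(P)\leq A_{k_i,i}(P)$, the binomial $t_i^{k_i}+t_j^{q}$ with $q=k_j+A_{k_i,i}(P)-A_{k_j,j}(P)\geq k_j$ equates the two blocks, so its tropical value vanishes precisely because the two simple block-minima coincide, and it is minimal since neither monomial is a solution. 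This is exactly the family appearing in Theorem~\ref{thm:s_n>1}, and it realizes every pair of $\binom{[n]\setminus L(P)}{2}$. Combining the two paragraphs yields $\mathcal{C}(P)=L(P)\cup\binom{[n]\setminus L(P)}{2}$.

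Finally I would verify the circuit axioms for $\mathcal{C}(P)$. Non-emptiness of every member ($\emptyset\notin\mathcal{C}(P)$) is Theorem~\ref{thm_val_mat}~i); incomparability (no circuit contains another) is Theorem~\ref{thm_val_mat}~iii) and is immediate from the formula, since a loop singleton $\{e\}$ with $e\in L(P)$ can never sit inside a pair drawn from $[n]\setminus L(P)$, and distinct pairs are incomparable. For circuit elimination the only non-vacuous case is two distinct pairs $\{e,x\},\{e,y\}\subseteq[n]\setminus L(P)$ sharing $e$; then $\{x,y\}\in\binom{[n]\setminus L(P)}{2}\subseteq\mathcal{C}(P)$ is a circuit inside $(\{e,x\}\cup\{e,y\})\setminus\{e\}$, matching Theorem~\ref{thm_val_mat}~iv). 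Thus $M(P)=([n],\mathcal{C}(P))$ is a matroid, namely the direct sum of $|L(P)|$ loops with the rank-one parallel class $[n]\setminus L(P)$; its singleton circuits are exactly $\{\{e\}:e\in L(P)\}$, so $L(P)$ is precisely its set of loops. The main obstacle is the pair analysis: obtaining the exact realizable supports requires the minimality bookkeeping (ruling out loop indices and constructing the balancing binomial), after which the matroid verification is purely formal.
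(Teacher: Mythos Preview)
Your proposal is correct and follows essentially the same route as the paper: both arguments identify the singleton supports via the condition $P_j\in V(A_{k_j})$, exclude pairs touching $L(P)$ by minimality, and realize every pair in $\binom{[n]\setminus L(P)}{2}$ by the same balancing binomial $t_a^{k_a}+t_b^{k_b+A_{k_a,a}(P)-A_{k_b,b}(P)}$. The only cosmetic difference is in how the matroid axioms are dispatched: the paper simply notes that $\mathcal{C}(P)$ is the support family of the valuated circuits from Theorem~\ref{thm_val_mat} (via the preceding Remark), so the matroid structure is inherited, whereas you verify the three circuit axioms directly from the explicit formula $L(P)\cup\binom{[n]\setminus L(P)}{2}$ (while still citing Theorem~\ref{thm_val_mat} for the individual axioms). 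Your version is a touch more self-contained; the paper's is shorter.
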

\begin{proof}
By the previous remark, we have that $M(P)=([n],\mathcal{C}(P))$ is the underlying matroid of the valuated matroid $([n],\mathcal{C}_{\mathbb{R}}(P))$. It is also easy to see that $L(P)\cup\binom{[n]\setminus L(P)}{2}$ satisfies the axioms for circuits (once we prove it).

Let $S\in \mathcal{C}_{\mathbb{Z}_{\geq0}}(P)$ such that $Supp(S)=\{j\}\in\mathcal{C}$, then there exists $r\geq k_j$ such that $S=t_j^{r}\in\Sol(P_j)$, but this is equivalent to $P_j\in V(A_{k_j,j})$ by Proposition \ref{p:holonomic}. 

We now consider $\binom{[n]\setminus L(P)}{2}$, which is empty if $|[n]\setminus L(P)|\leq1$. If $[n]\setminus L(P)=\emptyset$, there is nothing left to do.

If $[n]\setminus L(P)=\{j\}$, then $j$ can not belong to a pair $\{i,j\}$, otherwise we would have $i\in L(P)$,  and by this reason there is no minimal solution $S=t_i^p+t_j^q$ with $p\geq k_i$ and $q\geq k_j$.

So we suppose that $|[n]\setminus L(P)|>1$, and we need to show that if $a,b\in [n]\setminus L(P)$, then there exists $S\in \mathcal{C}_{\mathbb{Z}_{\geq0}}(P)$ such that $Supp(S)=\{a,b\}$. 

W.L.O.G. we can suppose that $A_{k_a,a}-A_{k_b,b}\geq0$ (otherwise we choose $A_{k_b,b}-A_{k_a,a}\geq0$), then $S=t_{a}^{k_a}+t_b^{k_b+A_{k_a,a}-A_{k_b,b}}\in \mathcal{C}_{\mathbb{Z}_{\geq0}}(P)$ satisfies $Supp(S)=\{a,b\}.$
\end{proof}

We are now ready to prove the main result of this section.

\begin{theorem}[Structure of $\mu(\Sol(P))$ for $n>1$]
\label{thm:s_n>1} 
Consider $P=\bigoplus_{j=1}^nP_j$ as before, and $L(P):=\{j\in [n]\::\:P_j\in V(A_{k_j})\}$. Then 
\begin{enumerate}[label=\roman*)] 
    \item the set $\mathcal{C}_{\mathbb{Z}_{\geq0}}(P)$ satisfies  \begin{equation*}
        \mathcal{C}_{\mathbb{Z}_{\geq0}}(P)=\bigcup_{j\in L(P)}\mathbb{Z}_{\geq0}\odot \{k_j\}\cup\bigcup_{a,b\notin L(P)\::\:A_{k_b,b}\leq A_{k_a,a}}\mathbb{Z}_{\geq0}\odot\{(k_a,k_b+A_{k_a,a}-A_{k_b,b})\}.
    \end{equation*}
    \item The set $F(P):=\mu(\Sol(P))\setminus\mathcal{C}_{\mathbb{Z}_{\geq0}}(P)$ is finite. Furthermore, if $S\in F(P)$ with $S=Supp(S)$, then either 
\begin{enumerate}
\item $S=\{j\}$ iff  $S\in F(P_j)$,
\item $S=\{i,j\}$ iff $S=t_i^p+t_j^q$ satisfies
\begin{enumerate}
    \item $(p,q)<(k_i,k_j)$,
    \item if $p<k_i$, then $j\notin L(P)$ and $q=q(p)\geq k_j$ is unique (or $i\notin L(P)$ and $q< k_j$ and $p=p(q)\geq k_i$ is unique), 
\end{enumerate}
\end{enumerate}
\end{enumerate}
\end{theorem}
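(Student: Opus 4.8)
The plan is to prove Theorem~\ref{thm:s_n>1} by assembling the pieces already established for the two parts of the decomposition $\mu(\Sol(P))=\mathcal{C}_{\mathbb{Z}_{\geq0}}(P)\sqcup F(P)$. Part i) describes $\mathcal{C}_{\mathbb{Z}_{\geq0}}(P)$, and part ii) describes $F(P)$. Since the structure of $F(P)$ is essentially Theorem~\ref{thm:form_finite_part} (already proved), the genuinely new content is the explicit formula in part i). I would therefore state at the outset that ii) is immediate from Theorem~\ref{thm:form_finite_part} together with Theorem~\ref{thm:s_n=1} (the monomial case $S=\{j\}$ is governed by $F(P_j)$, and the binomial case with mixed support reduces to Proposition~\ref{minimal}~ii); the only symmetrization needed is to note that the roles of $p,q$ and of $i,j$ can be interchanged, which accounts for the parenthetical alternative in (b)).

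For part i), I would work through the identification $\mathcal{C}_{\mathbb{Z}_{\geq0}}(P)\xrightarrow{\sim}ord(\mathcal{C}_{\mathbb{Z}_{\geq0}}(P))$ and argue support by support, using the matroid structure from Corollary~\ref{eq:circuits_n>1}. By that corollary, every $S\in\mathcal{C}_{\mathbb{Z}_{\geq0}}(P)$ has $Supp(S)\in L(P)\cup\binom{[n]\setminus L(P)}{2}$, so the union splits exactly into the two families appearing in the claimed formula. First I would treat the loops: if $j\in L(P)$, then by Proposition~\ref{p:holonomic} applied to $P_j$ the monomial $\{k_j\}$ is a solution of $P_j$, hence of $P$, and the action of $\mathbb{Z}_{\geq0}$ from Theorem~\ref{thm_val_mat}~ii) produces exactly $\mathbb{Z}_{\geq0}\odot\{k_j\}$; conversely any minimal solution of order $\geq\mathbf{k}$ with singleton support must be of this shape. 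Second, for a pair $\{a,b\}$ with $a,b\notin L(P)$, I would pin down the unique binomial: assuming W.L.O.G. $A_{k_a,a}\geq A_{k_b,b}$, the element $S=t_a^{k_a}+t_b^{k_b+A_{k_a,a}-A_{k_b,b}}$ satisfies $trop_P(S)=A_{k_a,a}(P)=A_{k_b,b}(t_b^{k_b+A_{k_a,a}-A_{k_b,b}})$, so the minimum is attained twice and $S$ is a solution; minimality over order $\geq\mathbf{k}$ forces the second exponent to this value, exactly as in the proof of Corollary~\ref{eq:circuits_n>1}.

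The step I expect to be the main obstacle is establishing that the displayed element is the \emph{unique} minimal solution with support $\{a,b\}$ of order $\geq\mathbf{k}$, and in particular that $k_a$ (not some larger exponent) is forced in the first coordinate. The uniqueness of the second exponent given the first is Theorem~\ref{thm_val_mat}~iv) (or Proposition~\ref{minimal}~ii)); what requires care is why one may take the first exponent to be exactly $k_a$. Here the point is that within the region $ord(S_a)\geq k_a$, $ord(S_b)\geq k_b$, equation~(\ref{2}) shows $trop_{P_a}(t_a^{p})=A_{k_a,a}(P)+(p-k_a)$ and similarly for $b$, so increasing either exponent only raises both monomial values in lockstep; thus any solution with $ord(S_a)>k_a$ strictly dominates the one with $ord(S_a)=k_a$, contradicting minimality with respect to inclusion. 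I would spell out this monotonicity computation carefully, then combine the loop family and the pair family to recover the stated union, and finally invoke point~ii) of Theorem~\ref{thm_val_mat} to close $\mathcal{C}_{\mathbb{Z}_{\geq0}}(P)$ under the $\mathbb{Z}_{\geq0}$-action, giving the $\mathbb{Z}_{\geq0}\odot(\cdots)$ factors in the formula.
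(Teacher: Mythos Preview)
Your overall plan matches the paper's: part ii) is indeed just Theorem~\ref{thm:form_finite_part}, and for part i) one splits by support via Corollary~\ref{eq:circuits_n>1} and handles loops and pairs separately. However, your treatment of the pair case contains a genuine error.

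You write that ``any solution with $ord(S_a)>k_a$ strictly dominates the one with $ord(S_a)=k_a$, contradicting minimality with respect to inclusion.'' This is false: the order on $\mathcal{P}(\ZZ_{\ge 0})^n$ is \emph{componentwise set inclusion}, not numerical comparison of orders. The singleton $\{k_a+\alpha\}$ is not a superset of $\{k_a\}$ for $\alpha>0$; the two are incomparable. Consequently $t_a^{k_a+\alpha}+t_b^{k_b+\nu+\alpha}$ does \emph{not} dominate $t_a^{k_a}+t_b^{k_b+\nu}$, and in fact every element of the $\ZZ_{\ge 0}$-orbit is itself a minimal solution (its only proper nonzero subelements are the monomials $t_a^{k_a+\alpha}$ and $t_b^{k_b+\nu+\alpha}$, neither of which is a solution since $a,b\notin L(P)$). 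So the statement you are trying to prove---that the base element is the \emph{unique} minimal solution with support $\{a,b\}$ of order $\ge\mathbf{k}$---is simply not true, and your monotonicity argument cannot rescue it.

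The fix is the paper's direct computation, which is shorter than what you propose. For the inclusion $\subseteq$, take any $S\in\mathcal{C}_{\ZZ_{\ge 0}}(P)$ with $Supp(S)=\{a,b\}$ and write $S=t_a^{k_a+\alpha}+t_b^{k_b+\beta}$ with $\alpha,\beta\ge 0$. Since $a,b\notin L(P)$, neither monomial alone is a solution, so the minimum in $trop_P(S)$ is attained once in $P_a$ and once in $P_b$; using~(\ref{2}) (or~(\ref{29})) this reads $A_{k_a,a}(P)+\alpha=A_{k_b,b}(P)+\beta$. Hence $\beta-\alpha=A_{k_a,a}-A_{k_b,b}$ is forced, and $S=t^\alpha\cdot\bigl(t_a^{k_a}+t_b^{k_b+A_{k_a,a}-A_{k_b,b}}\bigr)$ lies in the stated orbit. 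No appeal to domination or minimality is needed for this direction; the orbit description comes out of the linear constraint, not from singling out a smallest representative.
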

\begin{proof}
The part ii) is Theorem~\ref{thm:form_finite_part}. For part i) it suffices to show the inclusion $\subseteq$. Let $S\in \mathcal{C}_{\mathbb{Z}_{\geq0}}(P)$. If $Supp(S)=\{i\}\in L(P)$, then there exists $s\in\mathbb{Z}_{\geq0}$ such that  $S=t^s\cdot t_i^{k_i}$, so $\{S\in \mathcal{C}_{\mathbb{Z}_{\geq0}}(P)\::\:Supp(S)=\{i\}\}=\mathbb{Z}_{\geq0}\cdot \{k_i\}$.

If $Supp(S)=\{a,b\}\in \binom{[n]\setminus L(M(P))}{2}$, then $S=t_a^{k_a+\alpha}+t_b^{k_b+\beta}$ for some $\alpha,\beta\in\mathbb{Z}_{\geq0}$. This means 
     \begin{equation*}
        trop_P(S)= A_{P_a}(k_a)+\alpha=A_{P_b}(k_b)+\beta,
     \end{equation*}
     W.L.O.G. we can suppose that $A_{k_a,a}-A_{k_b,b}=\beta-\alpha\geq0$ (otherwise we choose $A_{k_b,b}-A_{k_a,a}\geq0$), so that $A_{k_a,a}=A_{k_b,b}+(\beta-\alpha)$ and $S=t^\alpha\cdot (t_{a}^{k_a}+t_b^{k_b+\nu(\{a,b\})})$. So $\{S\in \mathcal{C}_{\mathbb{Z}_{\geq0}}(P)\::\:Supp(S)=\{a,b\}\}=\mathbb{Z}_{\geq0}\cdot (t_{a}^{k_a}+t_b^{k_b+A_{k_a,a}-A_{k_b,b}})$.
\end{proof}

The action of the semigroup $\mathbb{Z}_{\geq0}$ on $\mathcal{C}_{\mathbb{Z}_{\geq0}}(P)$ becomes clear from Theorem \ref{thm:s_n>1}. 

\begin{corollary}
\label{cor:ordergeqkn>1_quot}
    The quotient set $\mathcal{C}_{\mathbb{Z}_{\geq0}}(P)/\mathbb{Z}_{\geq0}$ is determined by a {\it valuation} of circuits, which is the map $\nu:\mathcal{C}(P)\xrightarrow[]{}\mathbb{Z}_{\geq0}$  given by 
\begin{equation*}
\nu(S)=\begin{cases}
    k_j,&\text{ if }j\in L(P),\\
    A_{k_a,a}-A_{k_b,b},&\text{ if }\{a,b\}\in \binom{[n]\setminus L(P)}{2},A_{k_b,b}\leq A_{k_a,a}.\\
\end{cases}
\end{equation*}
And the set $\mathcal{C}_{\mathbb{Z}_{\geq0}}(P)$ can be recovered from $\mathcal{C}_{\mathbb{Z}_{\geq0}}(P)/\mathbb{Z}_{\geq0}$.
\end{corollary}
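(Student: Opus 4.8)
The plan is to extract everything from the explicit description in Theorem~\ref{thm:s_n>1}~i), which already presents $\mathcal{C}_{\mathbb{Z}_{\geq0}}(P)$ as a union of $\mathbb{Z}_{\geq0}$-orbits, one for each circuit in $\mathcal{C}(P)$; the corollary is just a repackaging of that list in terms of orbit representatives and a valuation. Concretely, I would first reduce the statement to two assertions: that the support map induces a bijection $\mathcal{C}_{\mathbb{Z}_{\geq0}}(P)/\mathbb{Z}_{\geq0}\xrightarrow{\sim}\mathcal{C}(P)$, and that each orbit has a canonical generator whose order data is encoded by $\nu$.

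For the bijection, I would argue that the action $i\odot S$ represents the Minkowski product $t^i\cdot S$, which shifts all exponents uniformly and hence fixes the set of occurring variables; thus $Supp$ is constant on orbits and descends to the quotient. Surjectivity onto $\mathcal{C}(P)$ is exactly Corollary~\ref{eq:circuits_n>1}, and injectivity is the observation that each of the two unions in Theorem~\ref{thm:s_n>1}~i) — the one indexed by $L(P)$ and the one indexed by $\binom{[n]\setminus L(P)}{2}$ — is a single orbit of the form $\mathbb{Z}_{\geq0}\odot r$, so any two elements sharing a support are related by the action.

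Next I would pin down the canonical generator $r$ of each orbit. Since $ord$ is injective on $\mathcal{C}_{\mathbb{Z}_{\geq0}}(P)$ (the bijection displayed just before Theorem~\ref{thm_val_mat}) and $ord(i\odot S)=ord(S)+(i,\ldots,i)$ on the finite coordinates, the $\mathbb{Z}_{\geq0}$-action is free, and each orbit contains a unique element at which some exponent attains its minimal admissible value, i.e.\ $\min_{l\in Supp(r)}\bigl(ord(r)_l-k_l\bigr)=0$. For $\{j\}\in L(P)$ this generator is $r=t_j^{k_j}$, and for $\{a,b\}\in\binom{[n]\setminus L(P)}{2}$, labelled so that $A_{k_b,b}\leq A_{k_a,a}$, it is $r=t_a^{k_a}+t_b^{k_b+A_{k_a,a}-A_{k_b,b}}$; both are read directly off Theorem~\ref{thm:s_n>1}~i). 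Recording the surplus exponent above $\mathbf{k}$ then yields exactly $\nu(\{j\})=k_j$ and $\nu(\{a,b\})=A_{k_a,a}-A_{k_b,b}$, and the recovery statement is immediate: each orbit equals $\mathbb{Z}_{\geq0}\odot r$ with $r$ reconstructible from its circuit together with $\mathbf{k}$ and $\nu$, so $\mathcal{C}_{\mathbb{Z}_{\geq0}}(P)=\bigcup_{S\in\mathcal{C}(P)}\mathbb{Z}_{\geq0}\odot r(S)$. The only point needing care — and the main, if minor, obstacle — is the well-definedness of $\nu$ on the \emph{unordered} pair $\{a,b\}$: one checks that $\nu(\{a,b\})$ is the nonnegative difference $|A_{k_a,a}-A_{k_b,b}|$, hence independent of the labelling, and that the listed $r$ is genuinely the unique orbit generator, which is precisely the freeness of the translation action on $ord$-values established above.
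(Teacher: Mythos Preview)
Your approach is essentially the one the paper intends: the corollary is stated there without proof, as an immediate repackaging of Theorem~\ref{thm:s_n>1}~i), and your argument via the support bijection and canonical orbit generators is exactly the natural way to unpack it.

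One small slip: in the loop case the canonical generator is $r=t_j^{k_j}$, whose surplus above $k_j$ is $0$, not $k_j$; so your sentence ``recording the surplus exponent above $\mathbf{k}$ then yields exactly $\nu(\{j\})=k_j$'' is off. The paper's $\nu$ is not uniformly the surplus: for loops it records $k_j$ itself (the exponent of the generator), while for pairs it is indeed the surplus $A_{k_a,a}-A_{k_b,b}$ in the $b$-coordinate. This does not affect your recovery argument --- the generator $t_j^{k_j}$ is already determined by $\{j\}$ and $\mathbf{k}$ alone, so $\nu(\{j\})$ carries no essential information --- but you should adjust the description of what $\nu$ measures accordingly.
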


  The next step is to introduce the concept of regularity for polynomials $P=\bigoplus_{j=1}^nP_j$ for $n>1$, to explain and simplify Definition \ref{regular}. To do so, we will use the family $\{A_{\alpha,j}\::\:j\in [n],\:0\leq \alpha\leq k_j\}$ from Definition \ref{def:linpols_n>1}.

\begin{definition}
\label{def_reg_n>1}
    We say that $P=\bigoplus_{j=1}^nP_j(u_j)$ is {\bf regular} if 
    \begin{enumerate}
        \item (local condition)  $P_j$ is regular for all $j\in[n]$ as in Definition \ref{def_reg_n=1}, 
        \item (global condition) the expression $\bigoplus_{j=1}^n\bigoplus_{\alpha=0}^{k_j-1}A_{\alpha,j}(P)$ does not vanish weakly in $\mathbb{T}$ (see Definition \ref{def:notvanishstr}).
    \end{enumerate}
\end{definition}

Similarly to Proposition \ref{p:Pregn=1}, we have the following characterization of regularity in terms of controlling the elements of $F(P)$.

\begin{proposition}
\label{p:Pregn>1}
    We have that $P$ is regular if and only if the elements of $F(P)$ from the decomposition \eqref{eq:intro_desc} are of the form 
    \begin{enumerate}
        \item $t_j^p+t_j^q$ with $0\leq p<k_j\leq q$,
        \item $t_i^p+t_j^q$ with $0\leq p<k_i$ and $k_j\leq q$.
    \end{enumerate}
    
\end{proposition}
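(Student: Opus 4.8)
The plan is to follow the template of Proposition~\ref{p:Pregn=1}, organizing the description of $F(P)$ according to the support of its elements (Lemma~\ref{lem:minimal_general}): the single-variable part $Supp(S)=\{j\}$ will be controlled by the local condition of Definition~\ref{def_reg_n>1}, and the genuinely two-variable part $Supp(S)=\{i,j\}$, $i\neq j$, by the global condition. Throughout I would keep the decomposition $\mu(\Sol(P))=\mathcal{C}_{\mathbb{Z}_{\geq0}}(P)\sqcup F(P)$ of Definition~\ref{eq:decompo} and the explicit list of possible shapes of $F(P)$ from Theorem~\ref{thm:s_n>1}~ii). Since the two shapes permitted in the statement are exactly the ``mixed'' ones (one exponent strictly below the relevant order, the other at least that order), the whole content of the proposition is that regularity is equivalent to the \emph{absence} of the two ``both-below-$\mathbf{k}$'' shapes, namely $t_j^p+t_j^q$ with $p\leq q<k_j$ and $t_i^p+t_j^q$ with $p<k_i$ and $q<k_j$.

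For the single-variable part I would apply Proposition~\ref{p:Pregn=1} to each summand $P_j$ in isolation. As recorded in the remark following Definition~\ref{eq:decompo}, a series $S=S_j(t_j)$ belongs to $\mu(\Sol(P))$ exactly when it belongs to $\mu(\Sol(P_j))$, so by Theorem~\ref{thm:s_n>1}~ii)(a) the support-$\{j\}$ elements of $F(P)$ are precisely those of $F(P_j)$. Proposition~\ref{p:Pregn=1} then identifies ``$P_j$ regular'' with ``every element of $\mu(\Sol(P_j))$ is a binomial $\{p,q\}$, $0\le p<k_j\le q$'', i.e. with the statement that $F(P_j)$ contains no binomial having both exponents $<k_j$. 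Hence the local condition is equivalent to: every support-$\{j\}$ element of $F(P)$ is of type 1.

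For the two-variable part the key tool is that $trop_P$ is a homomorphism of $\mathbb{B}$-semimodules, so for $i\neq j$ one has $trop_P(t_i^p+t_j^q)=A_{p,i}(P)\oplus A_{q,j}(P)$. Assuming local regularity, so that no summand $P_l$ has a monomial solution, I would argue exactly as in Proposition~\ref{minimal}~ii) and Theorem~\ref{thm_val_mat}~iv) that for $p<k_i$, $q<k_j$ the binomial $t_i^p+t_j^q$ is a \emph{minimal} solution if and only if $A_{p,i}(P)=A_{q,j}(P)$: such an equality makes the global minimum be attained once inside $P_i$ and once inside $P_j$, and minimality is automatic since neither monomial is a solution, whereas a strict inequality leaves the minimum attained only once. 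The numbers $A_{\alpha,l}(P)$ with $\alpha<k_l$ are precisely the factors of $\bigoplus_{l}\bigoplus_{\alpha<k_l}A_{\alpha,l}(P)$, so the global condition (no weak vanishing, Definition~\ref{def:notvanishstr}) says exactly that no two of these values coincide; discarding the equal-variable collisions already forbidden by local regularity, this is precisely the non-existence of type-(a) two-variable binomials. Combining the two parts gives the equivalence.

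The step I expect to be the main obstacle is the converse reading of the single-variable analysis, namely recovering the \emph{no-monomial-solution} part of local regularity, $P_j\notin V(A_{k_j})$ (equivalently $j\notin L(P)$), from a hypothesis that constrains only $F(P)$. A monomial minimal solution $t_j^r$ with $r\ge k_j$ lives in $\mathcal{C}_{\mathbb{Z}_{\geq0}}(P)$ rather than in $F(P)$, so it leaves no direct trace among the forbidden shapes. I would attempt to close this by running Proposition~\ref{p:Pregn=1} contrapositively on each $P_j$ and by exploiting the bookkeeping of Theorem~\ref{thm:s_n>1}~ii)(b)---where the admissible mixed forms $t_i^p+t_j^q$ carry the side condition that the variable with exponent $\ge k$ does not lie in $L(P)$---so that the presence of a loop $j\in L(P)$ must register either as a both-below-$k_j$ binomial inside some $F(P_l)$ or as an obstruction to the assumed shape of the support-$\{i,j\}$ part of $F(P)$.
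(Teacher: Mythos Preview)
Your outline matches the paper's proof exactly: the paper gives a two-sentence sketch saying that the local condition of Definition~\ref{def_reg_n>1} ``controls $\bigcup_j F(P_j)$'' (via Proposition~\ref{p:Pregn=1}) and that the global condition ``prevents the existence of solutions $t_i^p+t_j^q$ with $(p,q)<(k_i,k_j)$,'' which by Theorem~\ref{thm:form_finite_part} are the only shapes left to exclude.

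The concern you isolate in your last paragraph is genuine, and the paper's sketch does not address it either; moreover the closure you propose does not work. The side condition $j\notin L(P)$ in Theorem~\ref{thm:s_n>1}~ii)(b) only restricts which mixed binomials \emph{can} occur in $F(P)$---it does not force any to occur---so a loop $j\in L(P)$ need leave no trace in $F(P)$ at all. Concretely, take $n=2$, $k_1=2$ with $(a_{0,1},a_{1,1},a_{2,1})=(0,3,2)$ and $k_2=1$ with $(a_{0,2},a_{1,2})=(100,50)$. Then $A_{2}(P_1)=\min(2,4,2)$ is attained twice, so $P_1$ is non-holonomic, yet one checks $F(P_1)=\emptyset$; meanwhile $P_2$ is regular with $F(P_2)=\{t_2^0+t_2^{51}\}$. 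The values $A_{0,1}=0$, $A_{1,1}=1$, $A_{0,2}=100$ are pairwise distinct, so the global condition holds and there is no two-variable minimal solution with both exponents below $\mathbf k$; in fact no two-variable element lies in $F(P)$ at all. Thus $F(P)=\{t_2^0+t_2^{51}\}$ contains only a type-1 element, while $P$ fails local regularity in the sense used in Proposition~\ref{p:Pregn=1}. So the converse direction, as both you and the paper argue it, has a real gap: one needs either an extra hypothesis such as $L(P)=\emptyset$, or the statement should be phrased with $\mu(\Sol(P))$ in place of $F(P)$ as in the $n=1$ case.
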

\begin{proof}
    The first condition of Definition \ref{def_reg_n>1} controls  $\bigcup_jF(P_j)$; the second one prevents the existence of solutions  $S=t_i^p+t_j^q$ with $(p,q)<(k_i,k_j)$, which, according to Theorem \ref{thm:form_finite_part} are the only solutions of $F(P)$ left to be controlled.\qedhere
\end{proof}

If $P=\bigoplus_{j=1}^nP_j(u_j)$ is regular, then $L(P)=\emptyset$ since each $P_j$ is holonomic, and according to Corollary \ref{cor:ordergeqkn>1_quot}, the set $\mathcal{C}_{\mathbb{Z}_{\geq0}}(P)$ becomes controlled by the values  $A_{k_j,j}(P)-A_{i,k_i}(P)$ for $\{i,j\}\in \binom{[n]}{2}$. 

\begin{remark}
    \label{rem:trop_disc_n>1}
There exists a tropical algebraic set $\Delta(\mathbf{k})\subset \prod_{j=1}^n\mathbb{T}^{k_j+1}$ such that $\Delta(\mathbf{k})\cap\prod_{j=1}^n\mathbb{Z}^{k_j+1}$ consists of the TLDEs which are not regular. We call this $\Delta(\mathbf{k})$ the {\it tropical discriminant}. Indeed, both conditions of Definition \ref{def_reg_n>1} are tropical algebraic, the first one by Remark \ref{rem:trop_disc_n=1}, and the second one by Remark \ref{rem:wv_tropical}.
\end{remark}

   \section{Systems of tropical ordinary linear differential equations}\label{four}
In this section we define the important concepts of  regularity and genericity for holonomic systems of TLDEs. See Definition \ref{regular}.
   
Consider a system $\Sigma=\{P_1,\ldots,P_m\}$ of $m$ tropical ordinary linear differential equations in $n\geq1$ unknowns (cf. (\ref{22_P})), each one of differential order  $\mathbf{k}=(k_1,\ldots,k_n)\in(\mathbb{Z}_{>0})^n$:
\begin{equation}
\label{4M}
P_l=\bigoplus_{1\le j\le n,\ 0\le i\le k_{j}}a_{ijl}\odot u_j^{(i)}=\min_{1\le j\le n,\ 0\le i\le k_{j}} \{a_{ijl}+u_j^{(i)}\},\ 1\le l\le m.
\end{equation}

Recall that the set of solutions of  the system \eqref{4M} is the $\mathbb{B}$-subsemimodule $\Sol(\Sigma)=\bigcap_j\Sol(P_j)$ of $\mathcal{P}(\mathbb{Z}_{\geq0})$, and if  $(S_1,\ldots,S_n)\in \Sol(\Sigma)$ we have that 

\begin{eqnarray}\label{5}
\min_{1\le j\le n,\ 0\le i\le k_{j}} \{a_{ijl}+ val_{S_j}(i)\}
\end{eqnarray}
is attained at least twice for each $1\le l\le m$.

We have that $\Sol(\Sigma)$ is  generated by  $\mu(\Sol(\Sigma))$, and that $\Sigma$ is holonomic if $|\mu(\Sol(\Sigma))|<\infty$. Similar to Proposition~\ref{minimal} one can show that if $S=(S_1,\ldots,S_n)\in \mu(\Sol(\Sigma))$, then  $S_j \cap \ZZ_{\ge k_j}$ contains at most one element  for $1\le j\le n$.

\begin{proposition}\label{several}
\begin{enumerate}[label=\roman*)]
\item When $m<n$ a system (\ref{4}) is non-holonomic;
\item When $m=n$ a generic system (\ref{4}) is holonomic.
\end{enumerate}
\end{proposition}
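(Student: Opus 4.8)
The plan is to reduce holonomicity of $\Sigma$ to the (non)emptiness of its order-$\geq\mathbf{k}$ part, and then to analyze the latter as a tropical linear system. As recorded above, any $S=(S_1,\ldots,S_n)\in\mu(\Sol(\Sigma))$ satisfies $|S_j\cap\mathbb{Z}_{\geq k_j}|\leq 1$ for every $j$, so each $S_j$ splits as a ``low part'' inside $\{0,\ldots,k_j-1\}$ together with at most one ``high'' exponent $c_j\geq k_j$. First I would separate $\mu(\Sol(\Sigma))=\mathcal{C}_{\mathbb{Z}_{\geq0}}(\Sigma)\sqcup F(\Sigma)$ exactly as in Definition~\ref{eq:decompo}, where $\mathcal{C}_{\mathbb{Z}_{\geq0}}(\Sigma)$ collects the minimal solutions all of whose coordinates are high or empty. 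Since there are only finitely many possible low parts, and in an $F$-solution the high exponents are pinned down by the low parts through the ``attained-at-least-twice'' conditions (the system analogue of Theorem~\ref{thm:form_finite_part} and Proposition~\ref{minimal}~ii)), $F(\Sigma)$ is finite. On the other hand $\mathcal{C}_{\mathbb{Z}_{\geq0}}(\Sigma)$ is invariant under the $\mathbb{Z}_{\geq0}$-action $i\odot S$ (shifting all exponents, exactly as in Theorem~\ref{thm_val_mat}~ii)), and distinct members of an orbit are incomparable; hence $\mathcal{C}_{\mathbb{Z}_{\geq0}}(\Sigma)\neq\emptyset$ forces it to be infinite. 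Thus $\Sigma$ is holonomic if and only if $\mathcal{C}_{\mathbb{Z}_{\geq0}}(\Sigma)=\emptyset$.

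Next I would encode $\mathcal{C}_{\mathbb{Z}_{\geq0}}(\Sigma)$ tropically. A candidate solution with high exponents is $S=\sum_{j\in J}t_j^{c_j}$ with $c_j\geq k_j$; using $A_{c_j,j}(P_l)=A_{k_j,j}(P_l)+(c_j-k_j)$ we obtain $trop_{P_l}(S)=\min_{j\in J}\{b_{jl}+c_j\}$ with $b_{jl}:=A_{k_j,j}(P_l)-k_j$. Therefore $S\in\Sol(\Sigma)$ exactly when, for each $l$, the minimum of $\{b_{jl}+c_j\}$ is attained at least twice, i.e. the point $c$ lies on the corner locus $V(L_l)$ of the tropical linear form $L_l=\bigoplus_{j=1}^n b_{jl}\odot y_j$. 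Because $V(L_l)$ is closed under the torus action, nonemptiness of $\mathcal{C}_{\mathbb{Z}_{\geq0}}(\Sigma)$ is equivalent to $\bigcap_{l=1}^m V(L_l)\neq\emptyset$ inside $\mathbb{T}^n$ (boundary strata of $\mathbb{T}^n$ correspond to supports $J\subsetneq[n]$, and a nonempty intersection can always be rescaled to make every present coordinate $\geq k_j$ and integral).

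For part i) I would intersect the $m$ tropical hyperplanes $V(L_l)$ in $\mathbb{T}^n$ modulo the one-dimensional scaling action. Each is of codimension one, so the intersection has expected dimension at least $(n-1)-m\geq 0$ when $m<n$; since this ambient space is compact, an intersection of at most $n-1$ tropical hyperplanes is nonempty (by the standard dimension and stable-intersection count, equivalently by the underdetermined tropical-linear-system argument). Hence $\bigcap_l V(L_l)\neq\emptyset$, so $\mathcal{C}_{\mathbb{Z}_{\geq0}}(\Sigma)\neq\emptyset$ and $\Sigma$ is non-holonomic. Concretely, any solution $S$ it contains yields the infinite family $\{i\odot S:i\in\mathbb{Z}_{\geq0}\}$ of pairwise incomparable minimal solutions.

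For part ii), with $m=n$, I would show $\bigcap_l V(L_l)=\emptyset$ for generic coefficients. Fixing a support $J\subseteq[n]$, restriction to $J$ gives $n$ tropical hyperplanes in a space of dimension $|J|-1\leq n-1<n$, so the system is overdetermined, and the locus of coefficient vectors $(a_{ijl})$ for which these $n$ forms have a common tropical zero is the tropical analogue of a rank-deficiency locus, of codimension at least $n-(|J|-1)\geq 1$. Taking the union over the finitely many supports $J$ still gives a proper subset of the parameter space $\prod_{j}\mathbb{Z}^{k_j+1}$ (a tropical-discriminant-type set, cf. Remarks~\ref{rem:trop_disc_n=1} and \ref{rem:trop_disc_n>1}). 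Outside it, $\mathcal{C}_{\mathbb{Z}_{\geq0}}(\Sigma)=\emptyset$, and together with the finiteness of $F(\Sigma)$ this shows that a generic square system is holonomic. The hard part will be part ii): making precise that a generic overdetermined tropical linear system is inconsistent, uniformly over all supports $J$, since tropical intersections are ``fatter'' than classical ones and one must verify that the bad locus is genuinely of positive codimension rather than merely expected-empty.
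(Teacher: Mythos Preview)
Your argument for i) is correct and matches the paper's: both reduce to the solvability of an underdetermined homogeneous tropical linear system in the high exponents $q_j$, and then use the shift action to produce infinitely many incomparable minimal solutions.

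For ii) there is a genuine gap. Your entire strategy rests on the equivalence ``$\Sigma$ holonomic $\iff \mathcal{C}_{\mathbb{Z}_{\geq0}}(\Sigma)=\emptyset$'', which in turn requires that $F(\Sigma)$ be finite \emph{unconditionally}. You justify this by invoking ``the system analogue of Theorem~\ref{thm:form_finite_part} and Proposition~\ref{minimal}~ii)'', but those results are proved only for a single equation, where a minimal solution has at most two monomials. For a system of $m$ equations a minimal solution can have up to $2m$ monomials, and the argument that the low part pins down every high exponent does not carry over: once several $q_j$'s are present, the attained-twice conditions become a tropical linear system in the $q_j$'s that may well be underdetermined in non-generic cases. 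So the claim that $F(\Sigma)$ is always finite is unproved, and without it your generic emptiness of $\mathcal{C}_{\mathbb{Z}_{\geq0}}(\Sigma)$ does not yield holonomicity.

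The paper sidesteps this entirely. It does not split into $F$ and $\mathcal{C}$; instead, for \emph{every} choice of low parts $S_j\cap\{0,\dots,k_j-1\}$ (including the empty choice), it rewrites the conditions as a system of $n$ tropical non-homogeneous linear equations in the $n$ unknowns $q_j$ and appeals to the tropical B\'ezout theorem to conclude that generically this system has a unique solution. This single stroke handles both your $\mathcal{C}$-part and your $F$-part at once, and it only needs genericity, not an unconditional finiteness claim. The fix for your argument is to drop the $F/\mathcal{C}$ dichotomy in ii) and argue uniqueness of the $q_j$'s for each fixed low part directly, exactly as the paper does.
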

\begin{proof}
    \begin{enumerate}[label=\roman*)]
\item Take $S_j:=\{q_j\},\ 1\le j\le n$ for unknowns $q_j\in \ZZ_{\ge k_j}$. Then one can rewrite (\ref{5}) as a system of $m$ tropical homogeneous linear equations in $n$ unknowns. This system has a solution of the form $(q_1+b, \dots, q_n+b)$ for suitable $q_j \in \ZZ,\ 1\le j\le n$ and for an arbitrary $b\in \ZZ$ (see e.g. \cite{Butkovic}, \cite{AGG}, \cite{G13}).
\item For each $1\le j\le n$ fix a subset $S_j \cap \{0, \dots, k_j-1\}$ and 
$S_j\cap \ZZ_{\ge k_j}= \{q_j\}$ for an unknown $q_j$. One can rewrite (\ref{5}) as a system of $n$ tropical (non-homogeneous) linear equations in $n$ unknowns. In general, such a system has a unique solution due to the Tropical Bezout Theorem \cite{MS}.\qedhere
    \end{enumerate}
\end{proof}

\begin{remark}
Now we show that one can algorithmically test whether a system (\ref{4}) is holonomic. To this end one fixes $S_j \cap \{0, \dots, k_j-1\},\ 1\le j\le n$ and verifies whether a system of tropical linear equations with unknowns $q_j,\ 1\le j\le n$ (as in the proof of Proposition~\ref{several}~ii)) has a finite number of solutions in $\ZZ_{\ge 0}^n$. If this system is homogeneous one has to check its solvability in $\ZZ ^n$ (say, based on one of the algorithms from \cite{Butkovic}, \cite{AGG}, \cite{G13}). If the system is solvable then the system (\ref{4}) is non-holonomic (see the proof of  Proposition~\ref{several}~ii)). 

Now assume that this tropical linear system is not necessary homogeneous. For a non-homogeneous equation of this system of the form
\begin{eqnarray}\label{6}
\min_{1\le j\le n} \{b_j+q_j, b\},\ b\neq \infty
\end{eqnarray} 
take either a pair $1\le j_1<j_2\le n$ or a singleton $1\le j_0\le n$, respectively. We treat them as candidates for minima: i.e. either $b_{j_1}+q_{j_1}=b_{j_2}+q_{j_2}=\min_{1\le j\le n} \{b_j+q_j, b\}$ or $b_{j_0}+q_{j_0}=b=\min_{1\le j\le n} \{b_j+q_j, b\}$, respectively. In the former case it holds $k_{j_1}\le q_{j_1}\le b-b_{j_1},\  k_{j_2}\le q_{j_2}\le b-b_{j_2}$, and we have a finite number of possibilities for $q_{j_1}, q_{j_2}$. In the latter case it holds $q_{j_0}=b-b_{j_0}$. For each choice of $q_{j_1}, q_{j_2}$ or $q_{j_0}$, respectively, we proceed to the next equation of the form (\ref{6}) (substituting the chosen values of  $q_{j_1}, q_{j_2}$ or $q_{j_0}$, respectively).

After exhausting all the equations of the form (\ref{6})  take the truncated equation for each considered equation of the form (\ref{6}) by removing from it all non-assigned unknowns $q_j$. The algorithm checks whether the truncated tropical system is satisfied. If it is not satisfied, then the algorithm proceeds to another its branch by choosing different  $q_{j_1}, q_{j_2}$ or $q_{j_0}$ and their values. If after exhausting all the equations of the form (\ref{6}),  some homogeneous (i.e. with $b=\infty$) linear equations still remain, the algorithm checks the solvability of this homogeneous system (cf. above). If this homogeneous system has a solution then the initial system (\ref{4}) is non-holonomic (as above). If this homogeneous system has no solution then the algorithm proceeds to another its  branch. 

Finally, if after exhausting all the equations of the form (\ref{6}) no homogeneous equations remain, and at least one  non-assigned unknown $q_j$ remains, then one can take the values of all the latter unknowns as arbitrary large integers. Therefore, in this case the initial system (\ref{4}) is non-holonomic. Thus, if no branch of the algorithm yields the output that the initial system (\ref{4}) is non-holonomic, the algorithm concludes that  (\ref{4}) is holonomic.

Unfortunately, the complexity bound of the described algorithm is exponential.
\end{remark}

Consider  a holonomic system $\Sigma=\{P_1,\ldots,P_n\}$ on  $n>1$ unknowns as in \eqref{4}. We introduce two different notions of general position for $\Sigma$, namely regular system, based on the the concept of regular polynomial in $n>1$ unknowns from Definition \ref{def_reg_n>1}, and generic system,  using also the tropical polynomials $A_{\alpha,j}$ for $j\in [n]$ and $0\leq \alpha\leq k_j$ from Definition \ref{def:linpols_n>1}.


\begin{definition}\label{regular}
Consider a system (\ref{4}).
\begin{enumerate}
    \item We say that it is {\bf regular} if  $P_l$ is regular in the sense of Definition \ref{def_reg_n>1} for all $l\in[n]$.
    \item We say that it is {\bf generic} if the expressions
\begin{equation}\label{61}
A_{k_{\sigma(1)},\sigma(1)}(P_1)\odot\cdots \odot A_{k_{\sigma(n)},\sigma(n)}(P_n)=\sum_{1\le j\le n}A_{k_{\sigma(j)},\sigma(j)}(P_j)
\end{equation}
are pairwise distinct for all  $\sigma\in Sym(n)$, and in addition, for $l_1\neq l_2$ it holds
\begin{equation*}\label{62}
    A_{k_j,j}(P_{l_1})-A_{\alpha,j_1}(P_{l_1})\neq A_{k_j,j}(P_{l_2})-A_{\beta,j_2}(P_{l_2}),\ 0\le \alpha<k_{j_1},\ 0\le \beta<k_{j_2}.
\end{equation*}
\end{enumerate}
\end{definition}

Note that, for fixed  $\mathbf{k}=(k_1,\ldots,k_n)$,  the set of systems (\ref{4}) being not generic or not regular, lie  in a finite union of polyhedra of dimensions less than the full dimension $n(k_1+\cdots+k_n)$ inside of the space of all systems (\ref{4}). 

\begin{remark}
    Moreover, it follows from Remark \ref{rem:trop_disc_n>1} that the set of non-regular systems (\ref{4}) is a tropical algebraic set. The same is true for the set of non-generic systems (\ref{4}), indeed, the first condition of genericity \eqref{61}  is equivalent to the fact that the {\it tropical determinant}  of the matrix $(A_{k_j,j}(P_l))_{1\leq l,j\leq n}$ does not vanish weakly (see Definition \ref{def:notvanishstr}). The second condition of  can be reformulated as asking that the tropical determinant $tdet\begin{pmatrix} 
A_{k_j,j}(P_{l_1}) & A_{\alpha,j_1}(P_{l_1}) \\
A_{k_j,j}(P_{l_2}) & A_{\beta,j_2}(P_{l_2}) 
\end{pmatrix}$ does  not weakly vanishing.
\end{remark}

\begin{convention}
Given $1\leq j,l\leq n$ and $0\leq \alpha \leq k_j$, for the computations that follow we will write $A_{\alpha,j}(P_l)=A_{\alpha jl}$. Note that if $P_l=\bigoplus_{j=1}^nP_{j,l}(u_j)$ for $l\in[n]$, then $A_{\alpha,j}(P_l)=A_{\alpha,j}(P_{j,l})$ for every $j\in[n]$ and $0\leq \alpha \leq k_j$.
\end{convention}

\subsection{Upper bound on the number of minimal solutions for  $m=n=2$}
In this section we consider a generic regular  system (\ref{4}) for $n=m=2$ with unknowns $u,v$, this is $P_1=P_{u,1}(u)+P_{v,1}(v)$ and $P_2=P_{u,2}(u)+P_{v,2}(v)$ each one of differential order $\mathbf{k}=(k_u,k_v)$.  Note that 
\begin{eqnarray}\label{29}
A_{i_1jl}=A_{jl}+i_1-k_j,\ i_1\ge k_j,\ 1\le j,l\le n.
\end{eqnarray}

and also \begin{eqnarray}\label{57}
A_{0,1,l},\dots, A_{k_1-1,1,l}, A_{0,2,l},\dots, A_{k_2-1,2,l},\dots, A_{0,n,l},\dots,A_{k_n-1,n,l};
\end{eqnarray}
are pairwise distinct.

 For a minimal solution $(S_u, S_v)$ of (\ref{4}) and $l=1, 2$ denote by $S_{ul}\subset S_u,\ S_{vl}\subset S_v$ the subsets such that the minimum in (\ref{5}) is attained at $S_{ul} \cup S_{vl}$. Then $S_u=S_{u1} \cup S_{u2},\ S_v=S_{v1}\cup S_{v2}$. If $i$ belongs to $S_{u1} \cap S_{u2}$ then we say that $i$ belongs to $S_u$ with the multiplicity 2.

 Due to minimality of $(S_u, S_v)$ it holds $|S_u|+|S_v|\le 4$. Moreover, one can assume that $|S_{ul} \cap \{0,\dots, k_u-1\}| + |S_{vl} \cap \{0,\dots, k_v-1\}|\le 1,\ l=1,2$, since otherwise, the system (\ref{4}) is not regular (cf. Theorem~\ref{complete}). In addition, $|S_u|+|S_v|\ge 3$ since otherwise, the system (\ref{4}) is again not regular.  Now we describe all possible candidates for minimal solutions of a generic regular system (\ref{4}) (cf. Proposition~\ref{minimal}). 

 \begin{lemma}\label{unique}
Any minimal solution of a generic regular system (\ref{4}) is one of the following 5 types of configurations:

$i)_{uv}$ $S_u\in \{i, \star\},\ S_v\in \{p,\star\},\ 0\le i<k_u, 0\le p<k_v$; 

$i)_{uu}$ $S_u\in \{i,i_0,\star\},\ S_v\in \{\star\},\ 0\le i\le i_0<k_u$;

$i)_{vv}$ $S_u\in \{\star\},\ S_v\in \{p,p_0,\star\},\ 0\le p\le p_0<k_v$;

$i)_u$ $S_u\in \{i,\star\},\ S_v\in \{\star\}$;

$i)_v$ $S_u\in \{\star\},\ S_v\in \{p, \star\}$.

For each $i,i_0,p,p_0$ (as above) there exist at most one integer $q_u\ge k_u$ and at most one integer $q_v\ge k_v$ such that $q_u\in S_u,\ q_v\in S_v$ for all 5 types, respectively, provided that $(S_u, S_v)$ is a minimal solution of (\ref{4}). Moreover, the subsets $S_{u1}, S_{u2}, S_{v1}, S_{v2}$ are uniquely determined by $i,i_0,p,p_0$ for all 5 types, respectively, provided that $(S_u, S_v)$ is a minimal solution of (\ref{4}). 
\end{lemma}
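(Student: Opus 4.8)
The plan is to read the five configurations off the size and regularity bounds already recorded, and then to pin down the large exponents and the witness subsets using that the contribution of a large exponent to an equation is affine of slope $1$ in that exponent, together with the genericity hypotheses of Definition \ref{regular}. First I would split the elements of $S_u\cup S_v$ into the small ones (those in $\{0,\dots,k_u-1\}$ for $S_u$, resp. in $\{0,\dots,k_v-1\}$ for $S_v$) and the large ones. As noted before Proposition \ref{several}, each of $S_u,S_v$ carries at most one large exponent, and by minimality $|S_u|+|S_v|\le 4$. The regularity bound $|S_{ul}\cap\{0,\dots,k_u-1\}|+|S_{vl}\cap\{0,\dots,k_v-1\}|\le 1$ says that each witness $W_l:=S_{ul}\cup S_{vl}$ contains at most one small exponent; since minimality forces every element of $S_u,S_v$ to lie in some $W_l$ (any element outside $W_1\cup W_2$ could be deleted), the solution carries at most two small exponents in total. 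The bound $|S_u|+|S_v|\ge 3$ rules out the purely large case, whose total is at most $2$, so there are exactly one or two small exponents. Recording their distribution between $u$ and $v$ produces the three two-small types ($uu$, $uv$, $vv$) and the two one-small types ($u$, $v$) of the statement.

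Next I would fix the type and the small data $i,i_0,p,p_0$ and determine $q_u,q_v$ and the witness subsets. A fixed small exponent $\alpha<k_j$ contributes the constant $A_{\alpha j l}=A_{\alpha,j}(P_l)$ to $P_l$, and by the distinctness \eqref{57} (guaranteed by regularity) these small contributions are pairwise distinct for each $l$, so the two small exponents of a type never tie in the same equation; this is exactly what forces at most one small exponent per witness. A large exponent $q\ge k_j$ contributes a quantity of the form $q+c$ with $c$ independent of $q$, hence of slope $1$ in $q$ (cf. \eqref{29}). Because each equation's minimum \eqref{5} is attained at the elements of $W_l$, the two equations yield two tropical equalities among these contributions. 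Running through the finitely many admissible witness patterns for a given type (which small exponent pairs with which large exponent in each equation, every element being used, three-fold ties discarded as non-generic), each pattern produces a pair of slope-$1$ equalities that solve for $q_u$ and $q_v$ uniquely, exactly as in Proposition \ref{minimal} ii); reading these equalities off then fixes $S_{u1},S_{u2},S_{v1},S_{v2}$.

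The main obstacle is to show that at most one witness pattern is compatible with minimality, so that $q_u,q_v$ and the subsets are genuinely unique and not merely pattern-dependent. This is where the remaining genericity enters. The non-vanishing of the tropical determinant \eqref{61} prevents both equations from being balanced simultaneously between the two large exponents, which would otherwise yield a one-parameter family of solutions and contradict holonomicity (Proposition \ref{several} ii)). The $2\times2$-minor condition of Definition \ref{regular} prevents a single large exponent from being pinned by both equations at once, so that the two slope-$1$ equalities become effectively triangular and solve to a unique $(q_u,q_v)$. Combined with the slope-$1$ monotonicity, which already allows at most one value per equality, these conditions rule out every competing pattern and value, leaving the unique configuration asserted in the lemma. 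Verifying that no discarded pattern re-enters as a strictly smaller minimum is the portion that demands the careful, if routine, inequality bookkeeping.
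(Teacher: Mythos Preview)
Your classification into the five types is correct, and your observation that a fixed witness pattern determines $q_u,q_v$ via slope-$1$ equations is fine; this matches the paper. The gap is in how you rule out competing witness patterns for the same small data. The two uses of genericity you propose---that (\ref{61}) forbids both witnesses from being purely large, and that the second genericity condition of Definition~\ref{regular} forbids a single large exponent from being pinned by both equations---do not cover the decisive comparison. In type $i)_{uv}$, for instance, the two patterns ``$i$ meets $q_u$ in equation $1$ and $p$ meets $q_v$ in equation $2$'' versus ``$i$ meets $q_v$ in equation $1$ and $p$ meets $q_u$ in equation $2$'' each have both witnesses consisting of one small and one large exponent, with no double-pinning; neither of your conditions distinguishes them, and nothing becomes ``effectively triangular''.

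The paper's actual device is a sandwich inequality. From the first pattern, together with the minimality inequalities it entails, one extracts
\[
A_{u1}-A_{v1}\ \le\ (q_v-k_v)-(q_u-k_u)\ \le\ A_{u2}-A_{v2},
\]
while the second pattern yields the reversed chain. If both patterns were realizable (with their own $q_u,q_v$), one is forced to $A:=A_{u1}-A_{u2}-A_{v1}+A_{v2}=0$, contradicting (\ref{61}). The two remaining patterns, which swap which equation sees which small exponent, are eliminated beforehand by the strict regularity inequalities (\ref{57}). The second genericity condition of Definition~\ref{regular} enters only at the preliminary stage of excluding three-fold ties (your ``discarded as non-generic''), not in selecting among the surviving two-element patterns. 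The same sandwich mechanism, with the inequalities adapted, is what handles types $i)_{uu}$ and $i)_u$; this is the piece your plan is missing.
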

\begin{proof}
    Denote (cf.  Definition~\ref{regular}~ii)) $A:=A_{u1}-A_{u2}-A_{v1}+A_{v2}$. Due to genericity of (\ref{4}), see (\ref{61}) it holds $A\neq 0$.

    $i)_{uv}$. Let  $S_u=\{i,q_u\},\ S_v=\{p,q_v\},\ 0\le i<k_u, 0\le p<k_v$ be a minimal solution of (\ref{4}). There are 4 cases (taking into account (\ref{62})):

\begin{align}
    A_{iu1}&=A_{q_uu1},\ A_{pv2}=A_{q_vv2}; \label{7}\\
    A_{iu1}&=A_{q_vv1},\ A_{pv2}=A_{q_uu2};\label{8}\\
A_{iu2}&=A_{q_uu2},\ A_{pv1}=A_{q_vv1}; \label{9}\\
A_{iu2}&=A_{q_vv2},\ A_{pv1}=A_{q_uu1}.\label{10}
\end{align}

We claim that exactly one of (\ref{7})-(\ref{10}) is valid. W.l.o.g. assume that (\ref{7}) is valid (other three cases can be studied in a similar way). Then (\ref{29}) implies that
\begin{eqnarray}\label{11}
A_{iu1}=A_{u1}+q_u-k_u,\ A_{pv2}=A_{v2}+q_v-k_v.    
\end{eqnarray}
This provides the expressions for $q_u, q_v$. In addition, we have

\begin{align}
    A_{iu1}\le A_{q_vv1}&,\ A_{pv2}\le A_{q_uu2}; \label{12}\\
A_{iu1}<A_{pv1}&,\ A_{pv2}<A_{iu2}\label{13}
\end{align}

due to (\ref{57}).

Observe that (\ref{13}) contradicts to $A_{iu2}=A_{q_u^{(1)}u2},\ A_{pv1}=A_{q_v^{(1)}v1}$ (see (\ref{9})) with any $q_u^{(1)}\ge k_u, q_v^{(1)} \ge k_v$ and also contradicts to $A_{iu2}=A_{q_v^{(2)}v2},\ A_{pv1}=A_{q_u^{(2)}u1}$ (see (\ref{10})) with any $q_u^{(2)}\ge k_u,\ q_v^{(2)}\ge k_v$.

Note that  (\ref{11}),  (\ref{12}) imply that 
\begin{eqnarray}\label{14}
A_{u1}-A_{v1}\le (q_v-k_v)-(q_u-k_u)\le A_{u2}-A_{v2}.    
\end{eqnarray}
Equalities $A_{iu1}=A_{q_v^{(3)}v1},\ A_{pv2}=A_{q_u^{(3)}u2}$ (see (\ref{8})) for some $q_u^{(3)}\ge k_u,\ q_v^{(3)}\ge k_v$ entail that $A_{iu1}\le A_{q_u^{(3)}u1},\ A_{pv2}\le A_{q_v^{(3)}v2}$ (cf. (\ref{12})), therefore 
$$A_{v1}-A_{u1}\le (q_u^{(3)}-k_u)-(q_v^{(3)}-k_v)\le A_{v2}-A_{u2}$$
\noindent which together with (\ref{14}) deduces equality $A=0$.
This contradicts to genericity of (\ref{4}) and justifies the Lemma in case $i)_{uv}$. \vspace{1mm}

$i)_{uu}$. Let $(S_u=\{i,i_0,q_u\},\ S_v=\{q_v\}),\ 0\le i\le i_0<k_u$ be a minimal solution of (\ref{4}). There are 4 cases (taking into account (\ref{62})):

\begin{align}
    A_{iu1}= A_{q_uu1}&,\ A_{i_0u2}=A_{q_vv2};  \label{15}\\
    A_{iu1}= A_{q_vv1}&,\ A_{i_0u2}=A_{q_uu2};  \label{16}\\
    A_{i_0u1}= A_{q_uu1}&,\ A_{iu2}=A_{q_vv2}; \label{17}\\
    A_{i_0u1}= A_{q_vv1}&,\ A_{iu2}=A_{q_uu2}. \label{18}
\end{align}

Note that in case $i=i_0$ the equations (\ref{15}) coincide with (\ref{17}), and the equations (\ref{16}) coincide with equations (\ref{18}).

We claim that exactly one of (\ref{15})-(\ref{18}) is valid (cf. the proof of the case $i)_{uv}$ above).
W.l.o.g. assume that (\ref{15}) is valid (other three cases can be studied in a similar way). Then due to (\ref{29}) it holds
\begin{eqnarray}\label{19}
A_{iu1}=A_{u1}+q_u-k_u,\ A_{i_0u2}=A_{v2}+q_v-k_v.    
\end{eqnarray}
This provides the expressions for $q_u, q_v$. In addition, we have
\begin{eqnarray}\label{20}
A_{iu1}\le A_{q_vv1},\ A_{i_0u2}\le A_{q_uu2};    
\end{eqnarray}
\begin{eqnarray}\label{21}
A_{iu1}<A_{i_0u1},\ A_{i_0u2}<A_{iu2}.    
\end{eqnarray}
due to (\ref{57}) (in case $i=i_0$ the inequalities (\ref{21}) are omitted).
Observe that (\ref{21}) contradicts to equalities $A_{i_0u1}=A_{q_u^{(1)}u1},\ A_{iu2}=A_{q_v^{(1)}v2}$ (see (\ref{17})) with $q_u^{(1)}\ge k_u,\ q_v^{(1)}\ge k_v$ and also contradicts to equalities $A_{i_0u1}=A_{q_v^{(2)}v1},\ A_{iu2}=A_{q_u^{(2)}u2}$ (see (\ref{18})) with $q_u^{(2)}\ge k_u,\ q_v^{(2)}\ge k_v$.

Note that (\ref{19}), (\ref{20}) imply that 
\begin{eqnarray}\label{22}
A_{u1}-A_{v1}\le (q_v-k_v)-(q_u-k_u)\le A_{u2}-A_{v2}.    
\end{eqnarray}
The equalities $A_{iu1}=A_{q_v^{(3)}v1},\ A_{i_0u2}=A_{q_u^{(3)}u2}$ (see (\ref{16})) with $q_u^{(3)}\ge k_u,\ q_v^{(3)}\ge k_v$ entail that $A_{iu1}\le A_{q_u^{(3)}u1},\ A_{i_0u2}\le A_{q_v^{(3)}v2}$. Whence together with (\ref{19}) we obtain that
$$A_{v1}-A_{u1}\le (q_u^{(3)}-k_u)-(q_v^{(3)}-k_v)\le A_{v2}-A_{u2}$$
\noindent which leads to the equality $A=0$
taking into account (\ref{22}). This contradicts to genericity of (\ref{4}) and justifies the Lemma in case $i)_{uu}$. \vspace{1mm}

The case $i)_{vv}$ is studied similarly to $i)_{uu}$. \vspace{1mm}

$i)_{u}$. Let $(S_u=\{i,q_u\},\ S_v=\{q_v\}),\ 0\le i<k_u$ be a minimal solution of (\ref{4}). There are 4 cases:

\begin{align}
    A_{iu1}=A_{q_vv1}&,\ A_{q_uu2}=A_{q_vv2}; \label{23}\\
    A_{iu1}=A_{q_uu1}&,\ A_{q_uu2}=A_{q_vv2};  \label{24}\\
    A_{q_uu1}=A_{q_vv1}&,\ A_{iu2}=A_{q_vv2}; \label{25}\\
    A_{q_uu1}=A_{q_vv1}&,\ A_{iu2}=A_{q_uu2}. \label{26}
\end{align}

We claim that exactly one of (\ref{23})-(\ref{26}) is valid (cf. the cases $i)_{uv}, i)_{uu}$ above). W.l.o.g. assume that (\ref{23}) is valid (other three cases can be studied in a similar way). Then it holds
\begin{eqnarray}\label{27}
A_{iu1}=A_{v1}+q_v-k_v,\ A_{v2}+q_u-k_u=A_{v2}+q_v-k_v.    
\end{eqnarray}
This provides the expressions for $q_u, q_v$. In addition, we have 
\begin{eqnarray}\label{28}
A_{iu1}\le A_{q_uu1},\ A_{q_uu2}\le A_{iu2}.    
\end{eqnarray}
Together with (\ref{29}), (\ref{27}) this implies that
\begin{eqnarray}\label{30}
A\ge 0,\ A_{iu2}-A_{iu1}\ge A_{v2}-A_{v1}    
\end{eqnarray}

Observe that the equalities $A_{iu1}=A_{q_u^{(1)}u1},\ A_{q_u^{(1)}u2}=A_{q_v^{(1)}v2}$ (see (\ref{24})) with $q_u^{(1)}\ge k_u, q_v^{(1)}\ge k_v$ entail the inequality $A\le 0$
which contradicts to the first inequality of (\ref{30}) because of genericity of (\ref{4}). Similarly, the equalities $A_{q_u^{(2)}u1}=A_{q_v^{(2)}v1},\ A_{iu2}=A_{q_v^{(2)}}$ (see (\ref{25})) with $q_u^{(2)}\ge k_u, q_v^{(2)}\ge k_v$ entail the inequality $A\le 0$
as well, which again leads to a contradiction. 

Finally, suppose that the equalities $A_{q_u^{(3)}u1}=A_{q_v^{(3)}v1},\ A_{iu2}=A_{q_u^{(3)}u2}$ (see (\ref{26})) hold with $q_u^{(3)}\ge k_u, q_v^{(3)}\ge k_v$. Hence due to (\ref{29}) we obtain that $A_{iu2}=A_{u2}+q_u^{(3)}-k_u$. Then the inequality $A_{iu1}\ge A_{q_u^{(3)}u1}$ and (\ref{29}) imply that $A_{iu1}\ge A_{u1}+q_u^{(3)}-k_u=A_{u1}+A_{iu2}-A_{u2}$ which contradicts to (\ref{30}). This justifies the case $i)_{u}$.

The case $i)_v$ is justified in a similar manner.
\end{proof}

In the next lemma we use the notations from Lemma~\ref{unique}.

\begin{lemma}\label{rigidity}
For $0\le i<k_u$ denote $A_i':=A_{iu2}-A_{iu1}-A_{v2}+A_{v1}$. \vspace{1mm}

1. Let $(S_u=\{i,q_u\},\ S_v:=\{q_v\})$ be a minimal solution  of the type $i)_u$ of a generic regular system (\ref{4}). Then the  following inequalities hold:
\begin{align}
    A\ge 0, A_i'\ge 0\ &\text{when}\ S_{u,1}=\{i\}, S_{v,1}=S_{v,2}=\{q_v\}, S_{u,2}=\{q_u\};   \label{43}\\
    A\le 0, A_i'+A\ge 0\ &\text{when}\ S_{u,1}=\{i,q_u\}, S_{v,1}= \emptyset, S_{u,2}=\{q_u\}, S_{v,2}=\{q_v\};  \label{44}\\
    A\le 0, A_i'\le 0\ &\text{when}\ S_{u,1}=\{q_u\}, S_{v,1}=S_{v,2}=\{q_v\}, S_{u,2}=\{i\}; \label{45}\\
    A\ge 0, A_i'+A\le 0\ &\text{when}\ S_{u,1}=\{q_u\}, S_{v,1}=\{q_v\}, S_{u,2}=\{i,q_u\},\ S_{v,2}=\emptyset.\label{46}
\end{align}

2. Let $(S_u', S_v')$ of the type either $i)_{uv}$ or $i)_{uu}$ be a minimal solution of (\ref{4}) such that $i\in S_u'$. Then it holds
\begin{eqnarray}\label{47}
A\ge 0, A_i'+A\ge 0\ \text{when}\ S_{u,1}'=\{i\}, |S_{v,1}'|=1          
\end{eqnarray}
 and either $|S_{u,2}'|=1$ in case $i)_{uv}$  or $|S_{u,2}'|=2$ in case $i)_{uu}$;
 \begin{eqnarray}\label{48}
A\le 0, A_i'\ge 0\ \text{when}\ |S_{u,1}'|=2, S_{v,1}'=\emptyset          
\end{eqnarray}
 and either $S_{u,2}'=\emptyset$ in case $i)_{uv}$  or $|S_{u,2}'|=1$ in case $i)_{uu}$;
 \begin{eqnarray}\label{49}
A\le 0, A_i'+A\le 0\ \text{when}\ S_{u,2}'=\{i\}, |S_{v,2}'|=1          
\end{eqnarray}
 and either $|S_{u,1}'|=1$ in case $i)_{uv}$  or $|S_{u,1}'|=2$ in case $i)_{uu}$;
 \begin{eqnarray}\label{50}
A\ge 0, A_i'\le 0\ \text{when}\ |S_{u,2}'|=2, S_{v,2}'=\emptyset          
\end{eqnarray}
 and either $S_{u,1}'=\emptyset$ in case $i)_{uv}$  or $|S_{u,1}'|=1$ in case $i)_{uu}$. \vspace{1mm}

 3. Assume that both $(S_u,S_v)$ as in 1. and $(S_u',S_v')$ in 2. are minimal solutions of (\ref{4}). If $i\in S_{u,1}$ then $S_{u,1}'=S_{u,1},\ S_{v,1}'=S_{v,1}$, and if $i\in S_{u,2}$ then $S_{u,2}'=S_{u,2},\ S_{v,2}'=S_{v,2}$. \vspace{1mm}

 One can swap the roles of $u$ and of $v$ in 1., 2., 3., simultaneously.
\end{lemma}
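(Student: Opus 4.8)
The three quantities governing the statement, $A=A_{u1}-A_{u2}-A_{v1}+A_{v2}$ from Lemma~\ref{unique}, $A_i'=A_{iu2}-A_{iu1}-A_{v2}+A_{v1}$, and their sum $A_i'+A=A_{iu2}-A_{iu1}+A_{u1}-A_{u2}$, depend only on the system (\ref{4}) and on $i$, not on the chosen solution. The first step is to observe that genericity forces all three to be nonzero. Indeed $A\neq 0$ is exactly the first genericity condition (\ref{61}) (as already used in Lemma~\ref{unique}); rewriting $A_i'=\bigl(A_{k_v,v}(P_1)-A_{i,u}(P_1)\bigr)-\bigl(A_{k_v,v}(P_2)-A_{i,u}(P_2)\bigr)$ and $A_i'+A=\bigl(A_{k_u,u}(P_1)-A_{i,u}(P_1)\bigr)-\bigl(A_{k_u,u}(P_2)-A_{i,u}(P_2)\bigr)$ exhibits $A_i'$ and $A_i'+A$ as instances of the second genericity condition of Definition~\ref{regular}~ii) with $l_1=1$, $l_2=2$, $\alpha=\beta=i$. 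Hence each of $A$, $A_i'$, $A_i'+A$ has a definite strict sign, which will be the engine of the rigidity in part 3.

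For part 1 I would proceed configuration by configuration, exactly as in the proof of Lemma~\ref{unique}. Fixing one of the four distributions (\ref{43})-(\ref{46}) of $S_{u,1},S_{u,2},S_{v,1},S_{v,2}$, one writes the two equalities expressing that the minimum in (\ref{5}) is attained twice in each of $P_1$ and $P_2$, together with the inequalities saying that every other relevant value exceeds that minimum; substituting (\ref{29}) to eliminate $q_u$ and $q_v$ then collapses these relations into the asserted sign conditions on $A$ and $A_i'$. For (\ref{43}) this is verbatim the computation yielding (\ref{30}) in Lemma~\ref{unique}, since $A_i'\geq 0$ is $A_{iu2}-A_{iu1}\geq A_{v2}-A_{v1}$. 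Part 2 is identical in spirit: for a minimal solution of type $i)_{uv}$ or $i)_{uu}$ containing $i$, the analysis localizes at $i$ and its unique witness partner, the extra element ($p<k_v$ in the first type, $i_0<k_u$ in the second) contributing only further lower-bound inequalities, and the four placements of $i$ produce (\ref{47})-(\ref{50}). The work in parts 1 and 2 is bookkeeping rather than anything conceptual.

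The crux is part 3. Suppose $i\in S_{u,1}$ in the type $i)_u$ solution. Then part 1 places us in (\ref{43}) or (\ref{44}), and in either case the signs of all three quantities are determined: the sign vector $(\operatorname{sgn}A,\operatorname{sgn}A_i',\operatorname{sgn}(A_i'+A))$ equals $(+,+,+)$ for (\ref{43}) and $(-,+,+)$ for (\ref{44}). Scanning (\ref{47})-(\ref{50}), exactly one of them is compatible with this sign vector, namely (\ref{47}) in the first case and (\ref{48}) in the second, and in each the element $i$ witnesses the same equation $P_1$ with its second witness lying in the same variable as before (in $v$ for (\ref{43})$\leftrightarrow$(\ref{47}), in $u$ for (\ref{44})$\leftrightarrow$(\ref{48})). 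The cases $i\in S_{u,2}$ are symmetric: the sign vectors $(-,-,-)$ and $(+,-,-)$ single out (\ref{49}) and (\ref{50}). Here one should note that the two sign vectors $(+,-,+)$ and $(-,+,-)$, which a priori let the regions of (\ref{47})-(\ref{50}) overlap, never occur for a type $i)_u$ solution, so the selection is genuinely unique; this is exactly the place where the nonvanishing of $A_i'$ and $A_i'+A$, and not merely of $A$, is indispensable.

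It remains to upgrade ``same equation, same variable'' to the set equalities $S_{u,1}'=S_{u,1}$ and $S_{v,1}'=S_{v,1}$. Since $i$ witnesses $P_1$ in both solutions, both values of $trop_{P_1}$ equal the fixed number $A_{iu1}$, so the second witness also has value $A_{iu1}$. By the pairwise distinctness (\ref{57}) of the values indexed below $\mathbf{k}$ (a consequence of regularity, and which includes $A_{iu1}$ itself), no $u$-index $<k_u$ and no $v$-index $<k_v$ other than $i$ attains this value; hence the second witness is the unique index $\geq k_v$ (resp.\ $\geq k_u$) of the relevant variable --- unique by the fact, noted after (\ref{5}), that each $S_j$ meets $\ZZ_{\ge k_j}$ in at most one point --- and (\ref{29}) pins it down as $k_v+(A_{iu1}-A_{v1})$ (resp.\ $k_u+(A_{iu1}-A_{u1})$), the same value for both solutions. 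This yields the two equalities; the case $i\in S_{u,2}$ follows by exchanging $P_1$ and $P_2$, and the closing sentence of the lemma by exchanging $u$ and $v$. The single delicate point of the whole proof is this last step, where (\ref{57}) excludes a spurious sub-$\mathbf{k}$ witness and the determinacy of all three signs prevents a mismatch of configurations.
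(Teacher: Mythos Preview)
Your proposal is correct and follows essentially the same approach as the paper: parts 1 and 2 are obtained by reading off the sign constraints from the case analysis already carried out in the proof of Lemma~\ref{unique} (the paper simply cites the relevant displayed formulas), and part 3 is deduced by comparing those constraints. Your write-up is considerably more explicit than the paper's one-line proof of part 3: the paper just says ``follows from (\ref{43}), (\ref{44}), (\ref{47}), (\ref{48})'' (and the symmetric pair), whereas you spell out the sign-vector matching and, importantly, you make explicit the role of genericity in forcing $A_i'\neq 0$ and $A_i'+A\neq 0$ so that the weak inequalities become strict and the matching is unique. Your closing paragraph, pinning the second witness to the same numerical value via (\ref{29}) and excluding a sub-$\mathbf{k}$ witness via (\ref{57}), fills in what the paper leaves to the reader in order to obtain the literal set equalities $S_{u,1}'=S_{u,1}$, $S_{v,1}'=S_{v,1}$.
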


\begin{proof}
\begin{enumerate}
    \item The statement (\ref{43}) coincides with (\ref{30}) (see (\ref{23})) in the proof of Lemma~\ref{unique}~$i)_u$. Other statements (\ref{44})-(\ref{46}) are justified in a similar way (cf. the proof of Lemma~\ref{unique}~$i)_{u}$).
    \item The statement (\ref{48}) in case of the type $i)_{uv}$ follows from (\ref{11}), (\ref{13}), (\ref{14}) (see (\ref{7}) in the proof of Lemma~\ref{unique}~$i)_{uv}$). The statement (\ref{48}) in case of the type $i)_{uu}$ follows from (\ref{19}), (\ref{21}), (\ref{22}) (see (\ref{15}) in the proof of Lemma~\ref{unique}~$i)_{uu}$). Other statements (\ref{47}), (\ref{49}), (\ref{50}) are justified in a similar way (cf. the proof of Lemma~\ref{unique}~$i)_{uv}, i)_{uu}$).
    \item The first statement of 3. follows from (\ref{43}), (\ref{44}), (\ref{47}), (\ref{48}), the second statement follows from (\ref{45}), (\ref{46}), (\ref{49}), (\ref{50}).\qedhere
\end{enumerate}
\end{proof}
Now we establish an upper bound on the number of minimal solutions of a generic regular system (\ref{4}).

\begin{theorem}\label{upper}
A generic regular system  (\ref{4}) has at most
$$(k_u+k_v)(k_u+k_v+1)/2$$
\noindent minimal solutions.
\end{theorem}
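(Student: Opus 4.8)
The plan is to prove the bound by exhibiting an injection from the set $\mu(\Sol(\Sigma))$ of minimal solutions into the set of unordered pairs (repetitions allowed) of the $k_u+k_v$ ``low slots''
$$\{0_u,\dots,(k_u-1)_u\}\sqcup\{0_v,\dots,(k_v-1)_v\},$$
which record the admissible low exponents $i<k_u$ that may occur in $S_u$ and $p<k_v$ that may occur in $S_v$. The number of such pairs is $\binom{k_u+k_v}{2}+(k_u+k_v)=\frac{(k_u+k_v)(k_u+k_v+1)}{2}$, so once the injection is in place the theorem follows immediately.

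First I would use Lemma~\ref{unique}: every minimal solution of a generic regular system (\ref{4}) is one of the five configurations $i)_{uv}$, $i)_{uu}$, $i)_{vv}$, $i)_u$, $i)_v$, it is determined by its low data $i,i_0,p,p_0$, and its high exponents $q_u\ge k_u$, $q_v\ge k_v$ are uniquely determined by that data. This lets me define the map $\Phi$ sending a minimal solution to the multiset of its low exponents: type $i)_{uv}$ goes to the mixed pair $\{i_u,p_v\}$; type $i)_{uu}$ with $i\neq i_0$ to $\{i_u,(i_0)_u\}$; type $i)_{vv}$ with $p\neq p_0$ to $\{p_v,(p_0)_v\}$; and every solution whose low content is a single slot — namely $i)_u$, $i)_v$, and the degenerate $i)_{uu}$ with $i=i_0$ (resp. $i)_{vv}$ with $p=p_0$) — to the diagonal pair $\{i_u,i_u\}$ (resp. $\{p_v,p_v\}$).

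Injectivity of $\Phi$ off the diagonal is straightforward: a mixed pair can only be the image of an $i)_{uv}$ solution, a pair of two distinct $u$-slots only of an $i)_{uu}$ solution, and a pair of two distinct $v$-slots only of an $i)_{vv}$ solution, while Lemma~\ref{unique} makes $q_u,q_v$ (hence the solution) unique within each such configuration. The entire difficulty is the diagonal: I must show that for each slot $i_u$ there is at most one minimal solution whose low content is exactly $\{i_u\}$, even though two different attainment patterns — the pattern $i)_u(i)$ and the degenerate pattern $i)_{uu}(i,i)$ — both produce subsets of the form $(\{i,q_u\},\{q_v\})$.

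This is where Lemma~\ref{rigidity} enters and is the main obstacle. The two patterns share the same $q_v$, pinned by $A_{iu1}=A_{q_vv1}$, but in the second equation the pattern $i)_u(i)$ realizes the minimum at the high pair $q_u,q_v$, forcing $A_{q_vv2}\le A_{iu2}$, whereas $i)_{uu}(i,i)$ realizes it at $i,q_u'$, forcing $A_{iu2}\le A_{q_vv2}$. In the notation $A_i'=A_{iu2}-A_{iu1}-A_{v2}+A_{v1}$ of Lemma~\ref{rigidity}, these are the complementary conditions $A_i'\ge0$ and $A_i'\le0$; since the second genericity condition of Definition~\ref{regular} yields $A_i'\neq0$ (and likewise $A_i'+A\neq0$ for the cases governed by the sign of $A$), at most one pattern can give a minimal solution. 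The delicate point is precisely matching $i)_u(i)$ and $i)_{uu}(i,i)$ to the right distributions of Lemma~\ref{rigidity} across the two sign regimes of $A$, and checking that the resulting inequalities on the common exponent $q_v$ are genuinely opposite. The symmetric argument with $u$ and $v$ interchanged settles the slots $p_v$, so $\Phi$ is injective and the count $\frac{(k_u+k_v)(k_u+k_v+1)}{2}$ follows.
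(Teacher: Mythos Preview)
Your approach is correct and genuinely different from the paper's. You build an explicit injection from $\mu(\Sol(\Sigma))$ into the $\binom{k_u+k_v+1}{2}$ multisets of size two on the low slots; the paper instead partitions the slots $\{0,\dots,k_u-1\}$ into $B_{u,1}\sqcup B_{u,2}\sqcup C_u$ (according to which equation captures the low element in a type~$i)_u$ solution) and $\{0,\dots,k_v-1\}$ similarly, then uses Lemma~\ref{rigidity}.3 to bound the count of each of the five types separately as a polynomial in $k_{u,1},k_{u,2},k_{v,1},k_{v,2}$, and finishes by summing and applying the algebraic inequality $k_{u,1}k_{v,2}+k_{u,2}k_{v,1}+k_{u,1}k_{u,2}+k_{v,1}k_{v,2}\le k^2/4$ with $k=k_{u,1}-k_{v,1}-k_{u,2}+k_{v,2}$. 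Your argument is cleaner and avoids this optimization step entirely; the paper's argument, on the other hand, makes the interaction between types $i)_u,i)_v$ and the three ``big'' types more explicit, which is closer in spirit to how the partitions $S_{u_j,l}$ are used in the later section on $n>2$.

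One caution on the diagonal case: your sketch singles out one particular pair of attainment patterns (both pinned by $A_{iu1}=A_{q_vv1}$), but in fact the solutions of shape $(\{i,q_u\},\{q_v\})$ admit six possible attainment patterns (the four in (\ref{23})--(\ref{26}) plus the two degenerate $i)_{uu}$ patterns (\ref{15}),(\ref{16}) at $i=i_0$), and not all of them share $q_v$. What makes your injection work is that under $A\neq 0$, $A_i'\neq 0$, $A_i'+A\neq 0$ (all forced by genericity (\ref{62})) the six resulting sign constraints on $(A,A_i')$ cut the line $\{A=\text{const}\}$ into disjoint intervals in $A_i'$: for $A>0$ one gets $A_i'>0$, $-A<A_i'<0$, and $A_i'<-A$ from (\ref{23}), (\ref{16}), (\ref{26}) respectively, the others being vacuous. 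So at most one pattern, hence at most one $(q_u,q_v)$, survives. You should spell this trichotomy out rather than argue from a single pair.
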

\begin{proof}
    W.l.o.g. one can assume that $A>0$ (the case $A<0$ can be studied in a similar way).

Denote by $B_{u,l},\ l=1,2$ the set of $i\in \{0,\dots,k_u-1\}$ such that (\ref{4}) has a minimal solution $(S_u,S_v)$ of the type $i)_u$ for which $i\in S_{u,l}$. Then $S_{u,l}=\{i\},\ |S_{v,l}|=1$ when $l=1$, and $|S_{u,l}|=2,\ S_{v,l}=\emptyset$ when $l=2$ since $A>0$ (see Lemma~\ref{rigidity}~1.). It holds $B_{u,1}\cap B_{u,2}=\emptyset$ due to Lemma~\ref{unique}~$i)_u$. Similarly, we define the sets $B_{v,l}\subset \{0,\dots,k_v-1\},\ l=1,2$. Denote $k_{u,l}:=|B_{u,l}|,\ k_{v,l}:=|B_{v,l}|, l=1,2,\ k:= k_{u,1}-k_{v,1}-k_{u,2}+k_{v,2}$.

Due to Lemmas~\ref{unique},~\ref{rigidity} the following upper bounds hold on the  number of minimal solutions of the respective types:
$$\bullet i)_{uv}: k_{u,1}k_{v,2}+k_{u,2}k_{v,1}+(k_u-k_{u,1}-k_{u,2})(k_v-k_{v,1}-k_{v,2})+$$
$$(k_{u,1}+k_{u,2})(k_v-k_{v,1}-k_{v,2})+(k_u-k_{u,1}-k_{u,2})(k_{v,1}+k_{v,2});$$
$$\bullet i)_{uu}: k_{u,1}k_{u,2}+(k_u-k_{u,1}-k_{u,2})(k_u-k_{u,1}-k_{u,2}+1)/2+(k_{u,1}+k_{u,2})(k_u-k_{u,1}-k_{u,2});$$
$$\bullet i)_{vv}: k_{v,1}k_{v,2}+(k_v-k_{v,1}-k_{v,2})(k_v-k_{v,1}-k_{v,2}+1)/2+(k_{v,1}+k_{v,2})(k_v-k_{v,1}-k_{v,2});$$
$$\bullet i)_u: k_{u,1}+k_{u,2};$$
$$\bullet i)_v: k_{v,1}+k_{v,2}.$$

Summing up all these upper bounds and taking into account the inequality $k_{u,1}k_{v,2}+k_{u,2}k_{v,1}+k_{u,1}k_{u,2}+k_{v,1}k_{v,2}\le k^2/4$, we obtain the required upper bound
$$\frac{(k_u+k_v)(k_u+k_v+1)}{2}+\frac{2k-k^2}{4}$$
\noindent (taking into account that $(2k-k^2)/4\le 1/4$).
\end{proof}

\subsection{Lower bound on the number of minimal solutions for $m=n=2$}

The next purpose is to construct a generic regular system (\ref{4}) which contains all minimal solutions of the types $i)_{uv},\ i)_{uu},\ i)_{vv}$ from Lemma~\ref{unique}. First we note some simple growing properties of the maps $A_{-}(P):\mathbb{Z}_{\geq0}\xrightarrow[]{}\mathbb{Z}$ from Definition \ref{def:map_A}.

\begin{remark}\label{increase}
If the coefficients of $P$  as in (\ref{1}) fulfill inequalities $a_{i+1}\ge a_i+1,\ 0\le i<k$ then $A_{j}(P)=j+a_0,\ j\in \mathbb{Z}_{\geq0}$; in particular the map $A_{-}(P)$ is increasing. On the contrary, if $a_{i+1}\le a_i+1$ for $\ 0\le i<k$, then 
$$A_{j}(P)=a_{j},\ 0\le j\le k.$$
\noindent Therefore, if $a_{i+1}<a_i$ for $0\le i<k$ then $A_{P}$ is decreasing on the interval $[0,k]$.
\end{remark}

\begin{theorem}\label{construction}
Assume that in a 
generic 
system (\ref{4}) its coefficients fulfill the following inequalities:
\begin{align}
    a_{i+1,u,1}\ge a_{iu1}+1&,\ 0\le i<k_u; \label{32}\\
    a_{i+1,v,1}\le a_{iv1}-1&,\ 0\le i<k_v;  \label{33}\\
    a_{i+1,u,2}\le a_{iu2}-1&,\ 0\le i<k_u;\label{34}\\
    a_{i+1,v,2}\ge a_{iv2}+1&,\ 0\le i<k_v;\label{35}\\
    a_{0v1}< a_{0u1}&,\  a_{0u2}< a_{0v2}.\label{37}
\end{align}

Then the system (\ref{4}) is regular and has the  minimal solutions of the following configurations:
\begin{itemize}
    \item $(S_u\in \{i,\star\},\ S_v\in \{p,\star\})$ of the type $i)_{uv},\ 0\le i<k_u,\ 0\le p<k_v$;
    \item $(S_u\in \{i,i_0,\star\},\ S_v\in \{\star\})$ of the type $i)_{uu},\ 0\le i\le i_0<k_u$;
    \item $(S_u\in \{\star\},\ S_v\in \{p, p_0, \star\})$ of the type $i)_{vv},\ 0\le p\le p_0<k_v$.
\end{itemize}
\end{theorem}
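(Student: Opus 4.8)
The plan is to turn the qualitative inequalities (\ref{32})--(\ref{37}) into closed formulas for the maps $A_{-}(P_{j,l})$ by means of Remark~\ref{increase}, and then, for each admissible tuple of low-order exponents, to write down explicit $q_u\ge k_u$ and $q_v\ge k_v$ completing it to a minimal solution. By Remark~\ref{increase} the two increasing blocks (\ref{32}), (\ref{35}) give $A_{\alpha u1}=a_{0u1}+\alpha$ and $A_{\alpha v2}=a_{0v2}+\alpha$ for all $\alpha\ge0$, while the two decreasing blocks (\ref{33}), (\ref{34}) give $A_{\alpha v1}=a_{\alpha v1}$ and $A_{\alpha u2}=a_{\alpha u2}$ for $0\le\alpha\le k$, with the eventual linear growth recorded in (\ref{29}). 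Substituting these into $A=A_{u1}-A_{u2}-A_{v1}+A_{v2}$ and using that by (\ref{33}), (\ref{34}) each decreasing step drops by at least one, together with (\ref{37}), gives $A>0$; thus we land in the case $A>0$ treated W.l.o.g.\ in Theorem~\ref{upper}, and genericity guarantees $A\neq0$ so the classification of Lemma~\ref{unique} applies verbatim.

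I would then dispatch regularity. By Definition~\ref{regular} it is enough that each $P_l$ be regular in the sense of Definition~\ref{def_reg_n>1}. Its global condition is precisely the pairwise distinctness (\ref{57}) provided by genericity. For the local condition the closed formulas above show that inside each variable the values $A_{\alpha jl}$, $0\le\alpha<k_j$, are distinct, so there are no binomial solutions below the order; a decreasing block attains its minimum at the largest admissible index and so has no monomial solution, and an increasing block has none once its leading gap is strict, which holds generically. Hence every $P_{j,l}$ is a regular univariate TLDE, so each $P_l$, and therefore the whole system, is regular.

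Next comes the construction, carried out type by type by selecting, among the four sub-cases of Lemma~\ref{unique}, the unique one compatible with $A>0$. For $i)_{uv}$ with $0\le i<k_u$, $0\le p<k_v$ I set $q_u:=k_u+A_{iu2}-A_{u2}$ and $q_v:=k_v+A_{pv1}-A_{v1}$ (this is case (\ref{9})); the formulas and (\ref{37}) show $q_u\ge k_u$, $q_v\ge k_v$, that $P_2$ attains its minimum twice at the $u$-exponents $i,q_u$ and $P_1$ twice at the $v$-exponents $p,q_v$, and that every other monomial is strictly larger. For $i)_{uu}$ with $0\le i\le i_0<k_u$ I set $q_v:=k_v+A_{iu1}-A_{v1}$ and $q_u:=k_u+A_{i_0u2}-A_{u2}$ (case (\ref{16})), so that $P_1$ vanishes at the $u$-exponent $i$ and the $v$-exponent $q_v$, while $P_2$ vanishes at the $u$-exponents $i_0,q_u$; the diagonal $i=i_0$ yields exactly the solutions of type $i)_u$. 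Minimality is immediate in every case: erasing any one of the at most four exponents leaves, in one of the two equations, a single monomial realizing the former minimum, so that equation ceases to vanish. Finally the solutions of types $i)_{vv}$ and $i)_v$ follow by the involution $(u,1)\leftrightarrow(v,2)$, $(v,1)\leftrightarrow(u,2)$, which preserves (\ref{32})--(\ref{37}) and genericity and interchanges $i)_{uu}$ with $i)_{vv}$.

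The main obstacle is bookkeeping rather than computation: for each configuration one must confirm that, once $A>0$ is imposed, precisely the chosen split among the four cases of Lemma~\ref{unique} is realizable and that the other three would contradict either (\ref{57}) or the sign of $A$, exactly as in that lemma's proof. Systematically replacing every $A_{\alpha jl}$ by its closed monotone expression reduces each verification (feasibility of $q_u,q_v$, the ``minimum attained twice'' comparisons, and essentiality of each exponent) to the linear inequalities (\ref{32})--(\ref{37}), which is what keeps all of these checks routine.
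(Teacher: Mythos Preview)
Your plan is essentially the paper's: you exploit Remark~\ref{increase} to turn (\ref{32})--(\ref{37}) into monotone closed forms for the $A_{\alpha jl}$, and for each configuration you pick the unique compatible split among the four sub-cases of Lemma~\ref{unique} to define $q_u,q_v$ explicitly. Your choices---case (\ref{9}) for $i)_{uv}$ and case (\ref{16}) for $i)_{uu}$---are exactly the paper's, and the symmetry you invoke for $i)_{vv}$ is the same one the paper uses implicitly.

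There is one genuine divergence worth noting. For minimality the paper argues indirectly: it shows that under (\ref{32})--(\ref{37}) the system has \emph{no} solutions of type $i)_u$ or $i)_v$, hence nothing strictly smaller than a constructed size-four solution can be a solution. You instead argue directly that each of the (at most four) exponents is essential, and you identify the diagonal $i=i_0$ of the $i)_{uu}$ construction with the three-element configurations the paper calls $i)_u$. Both arguments are valid, but your essentiality claim as written (``erasing any one exponent kills one equation'') literally only rules out the immediate subsets; to finish, you still need that no subset of size $\le 2$ is a solution, which follows from the regularity you establish (forcing $|S_u|+|S_v|\ge 3$). It would be worth saying this explicitly. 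Also, your assertion that (\ref{57}) is ``provided by genericity'' is misattributed: the pairwise distinctness of the $A_{\alpha jl}$ for fixed $l$ follows directly from the monotonicity in (\ref{32})--(\ref{35}) together with the strict separation (\ref{37}), not from the genericity conditions (\ref{61})--(\ref{62}).
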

\begin{proof}
    $i)_{uv}$ Let $0\le i<k_u,\ 0\le p<k_v$. Due to (\ref{34}) there exists a unique $q_u\ge k_u$ such that $A_{iu2}=A_{q_uu2}$ taking into account Remark~\ref{increase}. By the same argument due to (\ref{33}) there exists a unique $q_v\ge k_v$ such that $A_{pv1}=A_{q_vv1}$. We claim that $(S_u:=\{i,q_u\},\ S_v:=\{p,q_v\})$ is a solution of the 
system (\ref{4}) described in the Theorem. To this end it suffices to verify the following inequalities:
\begin{eqnarray}\label{36}
A_{pv1}\le A_{iu1},\  A_{pv1}\le A_{q_uu1},\  A_{iu2}\le A_{pv2},\  A_{iu2}\le A_{q_vv2}.  
\end{eqnarray}
Since $A_{pv1}\le A_{0v1}$ (see (\ref{33})) and $A_{0u1}\le A_{iu1}$ (see (\ref{32})), the first inequality in (\ref{36}) follows from (\ref{37}). Since $A_{q_uu1}\ge A_{0u1}$ (see (\ref{32})) we get the second inequality in (\ref{36}). Since $A_{iu2}\le A_{0u2}$ (see (\ref{34})) and $A_{pv2}\ge A_{0v2}$ (see (\ref{35})), we get the third inequality in (\ref{36}) invoking (\ref{37}). Since $A_{q_vv2}\ge A_{0v2}$ (see (\ref{35})) we get the fourth inequality of (\ref{36}). \vspace{1mm}

$i)_{uu}$ Let $0\le i<i_0<k_u$. Due to (\ref{37}) there exists a unique $q_v\ge k_v$ such that $A_{iu1}=A_{q_vv1}$ (see (\ref{32}), (\ref{33})). Due to (\ref{34}) there exists a unique $q_u\ge k_u$ such that $A_{i_0u2}=A_{q_uu2}$. We claim that $(S_u:=\{i,i_0,q_u\},\ S_v:=\{q_v\})$ is a solution of (\ref{4}).

To this end it suffices to verify the following inequalities:
\begin{eqnarray}\label{38}
A_{iu1}\le A_{i_0u1},\  A_{iu1}\le A_{q_uu1},\  A_{i_0u2}\le A_{iu2},\  A_{i_0u2}\le A_{q_vv2}.  
\end{eqnarray}
The first and the second inequalities in (\ref{38}) follow from (\ref{32}). The third inequality in (\ref{38}) follows from (\ref{34}). The fourth inequality in (\ref{38}) follows from (\ref{37}) taking into account (\ref{34}), (\ref{35}). \vspace{1mm}

$i)_{vv}$ Let $0\le p<p_0<k_v$. Due to (\ref{37}) there exists a unique $q_u\ge k_u$ such that $A_{pv2}=A_{q_uu2}$ (see (\ref{34}), (\ref{35})). Due to (\ref{33}) there exists a unique $q_v\ge k_v$ such that $A_{p_0v1}=A_{q_vv1}$. We claim that $(S_u:=\{q_u\},\ S_v:=\{p,p_0,q_v\})$ is a solution of (\ref{4}). 

To this end it suffices to verify the following inequalities:
\begin{eqnarray}\label{39}
A_{p_0v1}\le A_{pv1},\  A_{p_0v1}\le A_{q_uu1},\  A_{pv2}\le A_{p_0v2},\  A_{pv2}\le A_{q_vv2}.  
\end{eqnarray}
The first inequality in (\ref{39}) follows from (\ref{33}). The second inequality in (\ref{39}) follows from (\ref{37}) taking into account (\ref{32}), (\ref{33}). The third and the fourth inequalities in (\ref{39}) follow from (\ref{35}). \vspace{1mm}

Finally, we have to show that the produced solutions of the types $i)_{uv}, i)_{uu}, i)_{vv}$ are minimal. To this end it suffices to verify that the described
system (\ref{4}) has no solutions of types $i)_u, i)_v$ (see Lemma~\ref{unique}). 

Suppose the contrary and let $(S_u=\{i,q_u\},\ S_v=\{q_v\})$ be a solution of (\ref{4}) of the type $i)_u$. Taking into account (\ref{32})-(\ref{35}) we have to consider two cases: either $A_{iu1}=A_{q_vv1},\ A_{q_uu2}=A_{q_vv2}$ or $A_{iu2}=A_{q_uu2},\ A_{q_uu1}=A_{q_vv1}$. In the former case it holds that $A_{iu2}<A_{q_vv2}$ due to (\ref{37}). In the latter case it holds that $A_{iu1}<A_{q_uu1}$ due to (\ref{32}). Thus, in both cases we arrive to a contradiction with that $(S_u,\ S_v)$ is a solution of (\ref{4}). 

In a similar way a supposition that there exists a solution of the form $(S_u=\{q_u\},\ S_v=\{p,q_v\})$ of (\ref{4}) of the type $i)_v$ leads to a contradiction as well.
\end{proof}

\begin{corollary}\label{gap}
The maximal number of minimal solutions of a generic regular system (\ref{4}) equals $\frac{(k_u+k_v)(k_u+k_v+1)}{2}$.
\end{corollary}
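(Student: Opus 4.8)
The plan is to establish the two matching bounds and observe that they coincide. The upper bound is nothing new: Theorem~\ref{upper} already gives that any generic regular system (\ref{4}) with $n=2$ has at most $(k_u+k_v)(k_u+k_v+1)/2$ minimal solutions. Hence the whole task reduces to exhibiting a single generic regular system that realizes this number, and for that I would use the system produced in Theorem~\ref{construction}.

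First I would make sure such a system exists as a \emph{generic} regular one, since Theorem~\ref{construction} only asserts regularity and lists its solutions under the standing assumption of genericity. The inequalities (\ref{32})--(\ref{37}) carve out a full-dimensional polyhedral cone in the parameter space of systems (\ref{4}), whereas, by Remark~\ref{rem:trop_disc_n>1} together with the dimension count following Definition~\ref{regular}, the non-generic (and non-regular) systems form a finite union of lower-dimensional polyhedra. A generic point of the cone therefore satisfies (\ref{32})--(\ref{37}) and is simultaneously generic; by Theorem~\ref{construction} it is then regular and has precisely the minimal solutions listed there, of the three types $i)_{uv}$, $i)_{uu}$, $i)_{vv}$ from Lemma~\ref{unique}.

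The core of the argument is then a counting step, which I would organize by the profile $(|S_u|,|S_v|)$, since this cleanly separates the three families: $i)_{uv}$ has profile $(2,2)$, $i)_{uu}$ has profile $(3,1)$ or $(2,1)$, and $i)_{vv}$ has profile $(1,3)$ or $(1,2)$. Thus the families are pairwise disjoint, and within each family distinct parameters give distinct minimal solutions (the tails $q_u,q_v$ being uniquely determined, Lemma~\ref{unique}). Counting: $i)_{uv}$ contributes $k_uk_v$, one solution for each pair $(i,p)$ with $0\le i<k_u$, $0\le p<k_v$; $i)_{uu}$ contributes the number of pairs $0\le i\le i_0<k_u$, namely $\binom{k_u+1}{2}=k_u(k_u+1)/2$; and symmetrically $i)_{vv}$ contributes $k_v(k_v+1)/2$. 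Summing,
\begin{equation*}
k_uk_v+\frac{k_u(k_u+1)}{2}+\frac{k_v(k_v+1)}{2}=\frac{(k_u+k_v)(k_u+k_v+1)}{2},
\end{equation*}
which coincides with the upper bound; since the listed solutions are minimal and pairwise distinct, the supremum is attained, and the corollary follows.

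I expect the delicate point to be the diagonal cases $i=i_0$ in $i)_{uu}$ (and $p=p_0$ in $i)_{vv}$). As sets these solutions have the shape $(\{i,q_u\},\{q_v\})$, formally identical to a solution of type $i)_u$, yet they are genuine minimal solutions in which the repeated index $i$ sits in $S_{u,1}\cap S_{u,2}$ with multiplicity $2$. They contribute exactly the extra $k_u+k_v$ that lifts the naive strict-inequality count $(k_u+k_v)(k_u+k_v-1)/2$ up to the claimed value, so I would take care to confirm that the step in Theorem~\ref{construction} excluding solutions of types $i)_u$, $i)_v$ removes only the multiplicity-one configurations and leaves these diagonal, multiplicity-two solutions intact.
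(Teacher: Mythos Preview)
Your proposal is correct and follows exactly the approach implicit in the paper: the corollary is an immediate consequence of combining the upper bound from Theorem~\ref{upper} with the explicit construction of Theorem~\ref{construction}, together with the straightforward count $k_uk_v+\binom{k_u+1}{2}+\binom{k_v+1}{2}=(k_u+k_v)(k_u+k_v+1)/2$. Your added care about the existence of a \emph{generic} system inside the cone (\ref{32})--(\ref{37}) and about the diagonal cases $i=i_0$, $p=p_0$ (which are of type $i)_{uu}$, $i)_{vv}$ with multiplicity~$2$ rather than of types $i)_u$, $i)_v$) fills in details that the paper leaves implicit but does not change the argument.
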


\section{Bounds on the number of minimal solutions of a generic system for $m=n>1$}\label{five}

First, we construct a generic regular system (\ref{4}) for arbitrary $m=n$ with {\it many} minimal solutions, extending Theorem~\ref{construction}.

\begin{theorem}\label{lower}
For each $1\le j,l\le n, 0\le i\le k_j$ take the coefficients $a_{ijl}$ of a generic regular system (\ref{4}) satisfying the following inequalities (cf. Theorem~\ref{construction}):

\begin{eqnarray}\label{51}
a_{i+1, l, l}\le a_{ill}-1,\ 0\le i< k_l;    
\end{eqnarray}
\begin{eqnarray}\label{52}
a_{i+1, l+1 (\text{mod}\ n), l} \ge a_{i,  l+1 (\text{mod}\ n), l}+1,\ 0\le i< k_{l+1 (\text{mod}\ n)};   
\end{eqnarray}
\begin{eqnarray}\label{53}
a_{0,l,l}< a_{0, l+1 (\text{mod}\ n),l};    
\end{eqnarray}
\begin{eqnarray}\label{54}
a_{ijl}> a_{k_{l+1(\text{mod}\ n)}, l+1(\text{mod}\ n), l}+k_j,\ j\neq l, j\neq l+1(\text{mod}\ n).    \end{eqnarray}

Then  for each family of integers $1\le p_1<\cdots < p_s\le n$ such that $p_{j+1}\ge p_j+2(\text{mod}\ n),\ 1\le j\le s$, and for any $0\le i_{j1}\le i_{j2}< k_{p_j},\ 1\le j\le s$ and for any $0\le i_r<k_r,\ 1\le r\le n,\ r\neq p_j,\ r\neq p_j-1(\text{mod}\ n),\ 1\le j\le n$ the system (\ref{4}) has a minimal solution $S$ of (\ref{4}) whose configuration satisfies the following: 
\begin{eqnarray}\label{55}
S_{u_{p_j}}\in \{i_{j1}, i_{j2}, \star\},\ S_{u_{p_j-1(\text{mod}\ n)}}\in \{\star\},\ S_{u_r}\in \{i_r,\star\}.   
\end{eqnarray}
\end{theorem}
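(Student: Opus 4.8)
The plan is to mimic the proof of Theorem~\ref{construction}, replacing the two interacting variables $u,v$ of the case $n=2$ by the cyclic structure in which, by~(\ref{51}) and~(\ref{52}), the variable $u_l$ enters $P_l$ with strictly decreasing coefficients while $u_{l+1(\mathrm{mod}\,n)}$ enters with strictly increasing ones. First I would record, using Remark~\ref{increase}, that $A_{\alpha l l}=a_{\alpha l l}$ is strictly decreasing on $[0,k_l]$ (and then increases with unit slope past $k_l$ by~(\ref{29})), while $A_{\alpha,l+1,l}=\alpha+a_{0,l+1,l}$ is strictly increasing. I would also rewrite the vanishing condition~(\ref{5}) for $P_l$ in the form $\mathrm{trop}_{P_l}(S)=\min_{j}\min_{s\in S_j}A_{sjl}$, so that $P_l$ vanishes iff this minimum is attained at two distinct pairs $(j,s)$. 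Finally I would invoke~(\ref{54}): for $j\neq l,l+1$ one has $A_{sjl}>a_{k_{l+1},l+1,l}+s$, which by~(\ref{53}) exceeds every value attained by $u_l$ or $u_{l+1}$; hence only $u_l$ and $u_{l+1(\mathrm{mod}\,n)}$ can realize the minimum in $P_l$, reducing each equation to an effective two-variable one.

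Next I would construct the solution~(\ref{55}) by choosing the stars. Write $P=\{p_1,\dots,p_s\}$, its cyclic predecessors $Q=\{p_j-1(\mathrm{mod}\,n)\}$ (the star-only positions), and $R=[n]\setminus(P\cup Q)$; the gap hypothesis $p_{j+1}\ge p_j+2$ guarantees that $P,Q,R$ are pairwise disjoint. For each $m$ I fix one star $q_m\ge k_m$, determined inside the equation $P_m$ in which $u_m$ is the decreasing variable: if $m\in R$ I solve $A_{i_m m m}=A_{q_m m m}$; if $m=p_j\in P$ I solve $A_{i_{j2},m,m}=A_{q_m m m}$ (matching the larger low index with the star); and if $m=p_j-1\in Q$ I solve the cross-equation $A_{q_m m m}=A_{i_{j1},m+1,m}$, matching $q_m$ against the smallest low index of the following special variable $u_{p_j}$. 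Each such $q_m$ exists as an integer $\ge k_m$ because $A_{\alpha m m}$ increases with unit slope for $\alpha\ge k_m$ and, by~(\ref{53}) and the decrease of $A_{\cdot m m}$, the target value lies at least at $a_{k_m m m}$. That these choices make every $P_l$ vanish amounts to the inequalities collected as~(\ref{36}), (\ref{38}), (\ref{39}) in Theorem~\ref{construction}: the contribution of the increasing variable $u_{l+1}$ is bounded below via $a_{0,l+1,l}>a_{0ll}$ by~(\ref{53}), and every low index of $u_l$ below the matched one is strictly larger by~(\ref{51}). I would verify these exactly as there, organised by the type of the edge $l\to l+1$ according to the kinds of $l$ and $l+1$.

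For minimality I would use a counting argument. The constructed $S$ has exactly $2n$ elements ($n$ stars, and $2s+|R|=n$ low indices), and each is the achiever of the minimum in exactly one equation: for $m\in P\cup R$ the star $q_m$ and its matched low index achieve $P_m$, while for $m=p_j$ the smaller index $i_{j1}$ is instead the achiever (together with $q_{p_j-1}$) of the predecessor equation $P_{p_j-1}$. Since each of the $n$ equations has its minimum attained exactly twice---genericity (Definition~\ref{regular}) together with the strict monotonicities~(\ref{51}),~(\ref{52}) excludes any further coincidence among the $A_{sjl}$---this yields a perfect matching between the $2n$ elements and the $2n$ achiever slots. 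Consequently any proper subsolution $S'\lneq S$ omits some element $x$; in its unique equation $P_{l(x)}$ the surviving terms attain the value $\mathrm{trop}_{P_{l(x)}}(S)$ at most once, and genericity forbids a new double coincidence at a higher value, so $P_{l(x)}$ fails on $S'$. Hence $S$ is a minimal solution of~(\ref{4}). Regularity of the system follows along the way, since by~(\ref{51}),~(\ref{52}) the values $A_{0,j,l},\dots,A_{k_j-1,j,l}$ are pairwise distinct, precluding the low-order solutions excluded in Proposition~\ref{p:Pregn>1}.

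The main obstacle I anticipate is the minimality step: one must ensure that the minimum of each $P_l$ is attained \emph{exactly} twice and that no accidental equality among the finitely many relevant differences $A_{sjl}$ re-creates a solution after a deletion; this is precisely what the two genericity conditions of Definition~\ref{regular} exclude. Verifying that the cyclic matching is consistent---each star determined by a single equation, and the predecessor positions never colliding thanks to the gap condition $p_{j+1}\ge p_j+2$---is the book-keeping that must be carried out carefully.
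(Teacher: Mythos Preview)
Your proposal is correct and follows essentially the same approach as the paper: the construction of the stars $q_m$ (solving $A_{i_r,r,r}=A_{q_r,r,r}$ for $r\in R$, $A_{i_{j2},p_j,p_j}=A_{q_{p_j},p_j,p_j}$ for $p_j\in P$, and $A_{q_{p_j-1},p_j-1,p_j-1}=A_{i_{j1},p_j,p_j-1}$ for the star-only positions) is exactly the paper's, and your reduction of each $P_l$ to a two-variable equation via~(\ref{54}) makes explicit what the paper leaves implicit. Your counting/perfect-matching argument for minimality is more detailed than the paper's one-line ``similar to the proof of Theorem~\ref{construction}''; just note that when $i_{j1}=i_{j2}$ the element count drops to $2n-1$ with $i_{j1}$ serving as achiever in two equations, which your matching description should accommodate.
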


\begin{remark}\label{lower_bound}
It holds $|S_{u_1}|+\cdots + |S_{u_n}|=2n$, and the number of minimal solutions of the form (\ref{55}) equals
$$\prod_{1\le j\le s} k_{p_j}(k_{p_j}+1)/2 \cdot \prod_{1\le r\le n, r\neq p_j, r\neq p_j-1(\text{mod}\ n), 1\le j\le s} k_r.$$
\end{remark}

\begin{proof}
    First, let $1\le r\le n, r\neq p_j, r\neq p_j-1(\text{mod}\ n), 1\le j\le s$. Due to (\ref{51}) (cf. also Remark~\ref{increase}) there exists (a unique) $q_r\ge k_r$ such that 
$$A_{q_r,\ r,\ r}=A_{i_r,\ r,\ r}.$$
\noindent Then $S_{u_r}=\{i_r,q_r\}=S_{u_r,r}$.

For $p_j, 1\le j\le s$ due to (\ref{51}) there exists (a unique) $q_{p_j}\ge k_{p_j}$ such that
$$A_{q_{p_j},\ p_j,\ p_j}= A_{i_{j2},\ p_j,\ p_j}.$$
\noindent Due to (\ref{51})-(\ref{53}) there exists (a unique) $q_{p_j-1(\text{mod}\ n)}\ge k_{p_j-1(\text{mod}\ n)}$ such that 
$$A_{q_{p_j-1(\text{mod}\ n)},\ p_j-1(\text{mod}\ n),\ p_j-1(\text{mod}\ n)}= A_{i_{j1},\ p_j,\ p_j-1(\text{mod}\ n)}.$$

Then it holds 
$$S_{u_{p_j}}=\{i_{j1},\ i_{j2},\ q_{p_j}\},\ S_{u_{p_j-1(\text{mod}\ n)}}=\{q_{p_j-1(\text{mod}\ n)}\},$$
\noindent thus
$$S_{u_{p_j},\ p_j}= \{i_{j2}, q_{p_j}\},\ S_{u_{p_j},\ p_j-1(\text{mod}\ n)}= \{i_{j1}\},\ S_{u_{p_j-1(\text{mod}\ n)},\ p_j-1(\text{mod}\ n)}= \{q_{p_j-1(\text{mod}\ n)}\}.$$

Taking into account (\ref{51})-(\ref{54}) one can verify (\ref{55}) and that $(S_{u_1},\dots,S_{u_n})$ is a solution of (\ref{4}) similar to the proof of Theorem~\ref{construction}, and once again, similar to the proof of Theorem~\ref{construction} one can verify that the produced solution is minimal.
\end{proof}

Similar to Lemma~\ref{unique} one can prove the following lemma.

\begin{lemma}\label{unique_extended}
Let $(S_{u_1},\dots,S_{u_n})$ be a minimal solution of a generic regular system (\ref{4}). Then for any $1\le j\le n$ there exists a unique $q_j\ge k_j$ such that $q_j\in S_{u_j}$. It holds $|S_{u_1}|+\cdots +|S_{u_n}|\le 2n$. The subsets $S_{u_j,l}\subset S_{u_j},\ 1\le j,l\le n$ are uniquely defined by the sets 
$S_{u_1}\cap \{0,\dots,k_1-1\},\dots,S_{u_n}\cap \{0,\dots,k_n-1\}$.
\end{lemma}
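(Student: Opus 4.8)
The plan is to follow the strategy of Lemma~\ref{unique}, but to organize the combinatorics through the homomorphism property of $trop_{P_l}$ so that the bookkeeping survives to arbitrary $n$. First I would record the identity $trop_{P_l}(S)=\min_{1\le j\le n,\ s\in S_{u_j}}A_{s,j}(P_l)$, which holds because $trop_{P_l}$ is a morphism of $\mathbb{B}$-semimodules and $A_{s,j}(P_l)=trop_{P_l}(t_j^s)$. Under this identity an element $s\in S_{u_j}$ \emph{participates} in the $l$-th equation exactly when $A_{s,j}(P_l)=trop_{P_l}(S)$, and $S_{u_j,l}$ is precisely the set of such $s$. Three elementary observations then follow. (a) Uniqueness of the high element: by (\ref{29}) the value $A_{s,j}(P_l)=A_{k_j,j}(P_l)+(s-k_j)$ is strictly increasing in $s$ for $s\ge k_j$, so if two elements $q<q'$ of $S_{u_j}$ were both $\ge k_j$ then $q'$ would never attain any $trop_{P_l}(S)$ and could be deleted, contradicting minimality; hence $S_{u_j}\cap\ZZ_{\ge k_j}$ has at most one element. (b) Every element participates: deleting an $s\in S_{u_j}$ can only increase each $val_{S_{u_j}}(i)$, hence each $trop_{P_l}(S)$, so an $s$ participating in no equation could be removed, again contradicting minimality; thus $S_{u_j}=\bigcup_lS_{u_j,l}$. (c) At most one low participant per equation: if two distinct low participants $s_1\in S_{u_{j_1}}$, $s_2\in S_{u_{j_2}}$ (with $s_1<k_{j_1}$, $s_2<k_{j_2}$) occurred in the same equation $l$, then $A_{s_1,j_1}(P_l)=A_{s_2,j_2}(P_l)$, i.e. $\bigoplus_j\bigoplus_{\alpha<k_j}A_{\alpha,j}(P_l)$ would vanish weakly, contradicting regularity of $P_l$ (Definition~\ref{def_reg_n>1}, global condition). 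Finally, regularity forbids monomial solutions of any $P_l$ (since $L(P_l)=\emptyset$), so each equation has at least two participating elements.

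From (a)--(c) the bound $|S_{u_1}|+\cdots+|S_{u_n}|\le 2n$ is a short count: writing each $S_{u_j}$ as its low part $L_j=S_{u_j}\cap\{0,\dots,k_j-1\}$ plus at most one high element, the number of high elements is at most $n$; and counting incidences (low element, equation), each low element lies in at least one equation's support while each equation carries at most one low participant, so $\sum_j|L_j|\le n$, and adding gives $|S|\le 2n$. The same facts show that, since every equation has at least two participants of which at most one is low, every equation has at least one \emph{high} participant.

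The heart of the lemma is the existence of a high element in every $S_{u_j}$ together with the rigidity statement that $L_1,\dots,L_n$ determine all the $S_{u_j,l}$ (and hence the $q_j$). Here I would argue as in the exhaustive case analysis of Lemma~\ref{unique} and Lemma~\ref{rigidity}, now driven by the two genericity conditions of Definition~\ref{regular}(2). For existence: if some $S_{u_{j_0}}$ had no high element, then with at most $n-1$ high elements available but $n$ equations each requiring a high participant, pigeonhole forces two equations $l_1\neq l_2$ to share one common high participant $q_{j^*}$; eliminating $x_{j^*}:=q_{j^*}-k_{j^*}$ between the defining equalities $A_{q_{j^*},j^*}(P_{l_1})=A_{s',j'}(P_{l_1})$ and $A_{q_{j^*},j^*}(P_{l_2})=A_{s'',j''}(P_{l_2})$ yields $A_{k_{j^*},j^*}(P_{l_1})-A_{s',j'}(P_{l_1})=A_{k_{j^*},j^*}(P_{l_2})-A_{s'',j''}(P_{l_2})$, which is ruled out by the second genericity condition when the partners $s',s''$ are low, and by the permutation-sum distinctness (\ref{61}) when they are high. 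For rigidity, once the low elements are fixed, the difference-inequalities of the type (\ref{11})--(\ref{14}) single out, for each low element, the unique equation whose support it joins, after which each $q_j$ is pinned down as the unique exponent $\ge k_j$ solving $A_{q_j,j}(P_l)=A_{s,j'}(P_l)$ against its partner.

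I expect this last step to be the main obstacle. The counting bound $|S|\le 2n$ and the uniqueness of the high element are immediate, but promoting ``each equation has a high participant'' to ``each variable carries a high element,'' and proving full rigidity, requires excluding every mixed attainment pattern in which high elements are shared across equations. In the $n=2$ case this was controlled by the single scalar $A=A_{u1}-A_{u2}-A_{v1}+A_{v2}\neq0$; for general $n$ the correct replacement is the combination of the non-weak-vanishing of all the $2\times2$ tropical determinants from Definition~\ref{regular}(2) with the tropical nonsingularity (\ref{61}) of the matrix $(A_{k_j,j}(P_l))_{l,j}$, and the delicate part is verifying that these two hypotheses together exclude all sharing patterns, including those involving two high participants in a single equation.
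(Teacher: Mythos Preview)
Your preliminary setup---observations (a), (b), (c) and the resulting bound $\sum_j|S_{u_j}|\le 2n$---is correct and matches the paper's reasoning exactly. The divergence, and the gap, is in the two harder claims.

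For the existence of a high element in every $S_{u_j}$, your pigeonhole argument is incomplete in precisely the place you flag: when the partner of the shared high participant $q_{j^*}$ in equation $l_1$ or $l_2$ is itself high, eliminating $q_{j^*}-k_{j^*}$ does not produce an instance of (\ref{62}), and invoking (\ref{61}) is not justified either, since (\ref{61}) concerns sums over \emph{all} $n$ equations indexed by permutations of $[n]$, whereas with only $n-1$ high elements no such bijection exists. The paper sidesteps this entirely by a constructive route: for each $l$ the unique low participant $i_0$ (your observation (c)) fixes a candidate $q_{jl}\ge k_j$ for every $j$ via $A_{q_{jl},j,l}=A_{i_0,j_0,l}$, and then one \emph{defines} $q_j:=\max_l q_{jl}$ (the maximum is attained once because of (\ref{62})) and checks $q_j\in S_{u_j}$ by the mechanism of Lemma~\ref{unique}. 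This produces the high element directly rather than arguing by contradiction.

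For rigidity the paper's device is different from, and sharper than, the local difference-inequalities (\ref{11})--(\ref{14}) you propose to generalize. Once each equation $l$ has its low participant pinned and each $q_j$ is known, an attainment pattern assigns to $l$ some high participant $q_{p(l)}$; if two distinct patterns exist they give two permutations $p_1\neq p_2\in Sym(n)$ with
\[
A_{i,j_0,l}=A_{k_{p_1(l)},p_1(l)}(P_l)+q_{p_1(l)}-k_{p_1(l)}=A_{k_{p_2(l)},p_2(l)}(P_l)+q_{p_2(l)}-k_{p_2(l)},\qquad 1\le l\le n,
\]
and \emph{summing} these over $l$ cancels all the $q$'s and yields $\sum_l A_{k_{p_1(l)},p_1(l)}(P_l)=\sum_l A_{k_{p_2(l)},p_2(l)}(P_l)$, contradicting (\ref{61}). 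This single global step replaces the case analysis you anticipate; your local two-equation inequalities use only (\ref{62}) and would not by themselves force uniqueness of the full assignment for $n>2$.
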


\begin{proof}
    For $1\le l\le n$ assume that the minimum (which is unique due to  regularity of (\ref{4})) among $\{A_{ijl}\ |\ i\in S_{u_j}, 0\le i<k_j, 1\le j\le n\}$ is attained at 
$A_{i_0, j_0,l}$.
One can verify (cf. the proof of Lemma~\ref{unique}) that $i_0\in S_{u_{j_0}, l}$. This determines the intersections $S_{u_j, l}\cap \{0,\dots, k_j-1\},\ 1\le j,l\le n$.

For each $1\le l\le n,\ i\in S_{u_{j_0}, l},\ 0\le i<k_{j_0},\ 1\le j, j_0\le n$ take (a unique) $q_{jl}\ge k_j$ such that $A_{q_{jl}, j, l}=A_{i, j_0, l}$, provided that $q_{jl}$ does exist. For each $1\le j\le n$ denote by $q_j$ the maximum (which is unique due to genericity of (\ref{4}), see (\ref{62})) among existing $q_{jl},\ 1\le l\le n$. One can verify that $q_j\in S_{u_j}$ (cf. the proof of Lemma~\ref{unique}). This determines the sets $S_{u_j},\ 1\le j\le n$.

We claim that the sets $S_{u_j,l},\ 1\le j,l\le n$ are determined in a unique way. Suppose the contrary. Then there exist two different permutations $p_1, p_2\in Sym(n)$ such that 
$$A_{i, j_0, l}=A_{p_1(l), l}+q_{p_1(l)}-k_{p_1(l)}=A_{p_2(l), l}+q_{p_2(l)}-k_{p_2(l)},\ 1\le l\le n$$
\noindent (see (\ref{29})). Summing up over $1\le l\le n$ the second and the third parts of the latter equalities, we arrive to a contradiction with genericity of (\ref{4}), see (\ref{61}).
\end{proof}

Relying on Lemma~\ref{unique_extended} one can bound from above the number of minimal solutions of a generic regular system (\ref{4}) by 
\begin{eqnarray}\label{60}
\sum_{d_1+\cdots+d_n\le n} \binom{k_1+d_1-1}{d_1} \cdots \binom{k_n+d_n-1}{d_n}.
  \end{eqnarray}
\noindent Unlike the case $n=2$ (see Corollary~\ref{gap}) the gap between the obtained lower bound (see Theorem~\ref{lower} and Remark~\ref{lower_bound}) and the upper bound is quite big. It would be interesting to diminish this gap.

\section{Inversions of families of permutations and an upper bound on the number of minimal solutions of a generic system}\label{six}
In this section we improve the upper bound (\ref{60}) on the number of minimal solutions of a generic regular system (\ref{4}). First we extend the concept of an inversion to families of permutations and prove an upper bound on the number of inversions. We view a permutation from $Sym(r)$ as a bijection of the set $[r]$ with itself.

\begin{definition}\label{def:inversion}
For  $w\in Sym(r)$ and an $n$-tuple $1\le i_1, \dots, i_n\le r$, denote 
$$m_w(i_1,\dots, i_n):= w^{-1}(\min \{w(i_1), \dots, w(i_n)\}).$$

\noindent We say that $\{i_1, \dots, i_n\}$ is an {\bf inversion} of a family of permutations $w_1, \dots, w_n \in Sym(r)$ if integers $m_{w_1}(i_1,\dots,i_n),\dots, m_{w_n}(i_1,\dots,i_n)$ are pairwise distinct. Clearly, in this case it holds $\{m_{w_1}(i_1,\dots,i_n),\dots, m_{w_n}(i_1,\dots,i_n)\}=\{i_1,\dots,i_n\}$.
\end{definition}

The concept of inversions of a family of permutations generalizes the definition of an inversion of a permutation $w$ which coincides with inversions of a pair of permutations $e,w$ where $e$ denotes the identical permutation.

\begin{theorem}\label{inversion}
Consider $n\ge 2$.  The number of inversions of a family $w_1,\dots,w_n \in Sym(r)$ is bounded by 
$$\frac{2r^n}{n\cdot n!} + O(r^{n-1})$$
\noindent for a fixed $n$ and growing $r$.
\end{theorem}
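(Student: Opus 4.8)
The plan is to turn the combinatorial condition of Definition~\ref{def:inversion} into the counting of one cleanly described family of tuples, and then estimate that count. First I would record that a set $\{i_1,\dots,i_n\}$ can be an inversion only if its elements are pairwise distinct: the $n$ minimizers $m_{w_1},\dots,m_{w_n}$ all lie in $\{i_1,\dots,i_n\}$, so if they are pairwise distinct the set already has $n$ elements. Thus an inversion is genuinely an $n$-element subset $I\subseteq[r]$ whose minimizers exhaust $I$. Listing each minimizer under the index of the permutation producing it identifies inversions bijectively with the \emph{rainbow tuples}
\[
\{(y_1,\dots,y_n)\in[r]^n:\ w_l(y_l)<w_l(y_m)\ \text{for all}\ l\neq m\},
\]
in which $y_l$ is the strict $w_l$-minimum of $\{y_1,\dots,y_n\}$ (distinctness of the $y_l$ is then automatic). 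So the number of inversions equals the number of such tuples; since there are $\binom{r}{n}=\tfrac{r^n}{n!}+O(r^{n-1})$ subsets of size $n$, the theorem is equivalent to saying that a uniformly chosen $n$-subset is an inversion with probability at most $\tfrac2n+O(1/r)$, i.e. the number of inversions is at most $\tfrac2n\binom{r}{n}+O(r^{n-1})$.

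Next I would normalize and pass to the leading term. Replacing every $w_l$ by $w_l\pi$ for a fixed $\pi\in Sym(r)$ sends $\{i_1,\dots,i_n\}$ to $\{\pi(i_1),\dots,\pi(i_n)\}$ and preserves the inversion property, so the count is unchanged and I may take $\pi=w_1^{-1}$, assuming $w_1=\mathrm{id}$ is the natural order. To extract the leading coefficient I would sample $n$ elements i.i.d.\ uniformly from $[r]$; they are distinct with probability $1-O(1/r)$, and repeated or near-boundary tuples are absorbed into the $O(r^{n-1})$ error. It then suffices to bound the probability that the $n\times n$ matrix $A_{lm}=w_l(y_m)$ has its $n$ row-minima in $n$ distinct columns.

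For the decisive factor $\tfrac2n$ my route is a deletion recursion on $n$. Deleting from an inversion $I$ of $w_1,\dots,w_n$ its $w_1$-minimum yields an $(n-1)$-subset that is an inversion of $w_2,\dots,w_n$, since the $w_l$-minimizers for $l\ge2$ are distinct elements different from the deleted one and are unaffected by its removal. Conversely, the fibre over an inversion $J$ of $w_2,\dots,w_n$ consists of the elements $a$ that are $w_1$-below all of $J$ but, for each $l\ge2$, $w_l$-above the $w_l$-minimum of $J$; in particular it has at most $\mu_1(J)-1$ elements, where $\mu_1(J)$ is the $w_1$-rank of the $w_1$-smallest element of $J$. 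Summing gives $N_n\le\sum_{J}(\mu_1(J)-1)$, and the plan is to show that, averaged over the inversions $J$ of the remaining $n-1$ permutations, $\mu_1(J)$ is at most $\tfrac{r(n-1)}{n^2}(1+o(1))$. Feeding this into the recursion with base case $n=2$ (where inversions of $w_1,w_2$ are exactly the inversions of the single permutation $w_2w_1^{-1}$, of which there are at most $\binom r2$) telescopes to the claimed $\tfrac2n\binom rn$.

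I expect the main obstacle to be precisely this averaged bound on $\mu_1(J)$. It must use the full list of $n(n-1)$ strict inequalities rather than a sparse subset: the cyclic subset alone is satisfied by a positive-density set of tuples (for instance by reversing permutations when $n$ is even) and so overcounts by a factor of order $n!$, giving nothing. It must hold uniformly over arbitrary $w_1,\dots,w_n$, and it has to be sharp at $n=2$, where the extremal family is the reversing permutation. Equivalently, in the continuous formulation of the second step one is optimizing the rainbow-tuple volume over all families of measure-preserving bijections of $[0,1]$, and the difficulty is that these bijections need not be monotone, so the constraints interact in a way that resists a naive coordinate-by-coordinate integration. I anticipate the sharp constant emerging only after carefully accounting, inside the fibre estimate, for how many of the $\mu_1(J)-1$ candidate elements are forced to become $w_l$-minimizers for some $l\ge2$, which is exactly where the saving of the factor $\tfrac{n-1}{n}$ per recursion step must be located.
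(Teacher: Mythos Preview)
Your proposal has a genuine gap: the averaged bound $\mathbb{E}[\mu_1(J)] \le \tfrac{r(n-1)}{n^2}(1+o(1))$, which is the entire content of the recursion, is neither proved nor sketched, and you yourself flag it as the main obstacle. Without it the recursion on $n$ does not close; the crude fibre estimate $|\text{fibre}(J)| \le \mu_1(J)-1$ alone gives only $N_n \le r\cdot N_{n-1}$, which recovers nothing beyond the trivial $\binom{r}{n}$. There is no evident mechanism forcing the $w_1$-rank of the $w_1$-minimum of a \emph{generic inversion} of $w_2,\dots,w_n$ to be as small as $r(n-1)/n^2$: this depends on how the inversions of $w_2,\dots,w_n$ are distributed inside $[r]$ with respect to the unrelated order $w_1$, and for adversarial families the inversions could well cluster where $\mu_1$ is large. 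The ``saving of the factor $(n-1)/n$'' you locate in the fibre is exactly the statement to be proved, not a step toward it.

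The paper's argument sidesteps this by inducting on $r$ with $n$ fixed, using a short double-counting lemma in place of any averaging. For an $(n-1)$-subset $J\subset[r]$, the $n$ minimizers $m_{w_1}(J),\dots,m_{w_n}(J)$ all lie in $J$, so by pigeonhole at least two coincide; consequently $J$ is an inversion of \emph{either zero or exactly two} of the $n$ subfamilies $\{w_1,\dots,w_n\}\setminus\{w_j\}$. Summing over $j$ and taking the minimum, some subfamily has at most $\tfrac{2}{n}\binom{r}{n-1}\sim\tfrac{2r^{n-1}}{n!}$ inversions. One then removes from $[r]$ the global minimum $i_n:=w_n^{-1}(1)$: inversions not containing $i_n$ number at most $I(r-1)$, while those containing $i_n$ (and none of $i_1,\dots,i_{n-1}$) biject with inversions of the good $(n-1)$-subfamily on $[r]\setminus\{i_1,\dots,i_n\}$, giving $I(r)\le I(r-1)+\tfrac{2r^{n-1}}{n!}+O(r^{n-2})$. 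The factor $2/n$ drops out of pigeonhole with no control on where inversions sit.
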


\begin{remark}
 i) For $n=2$ it is well known that the maximal number of inversions equals  $r(r-1)/2$.

 ii) An obvious bound on the number of inversions is
 $$\binom{r}{n} \le \frac{r^n}{n!}+O(r^{n-1}).$$
 Thereby, Theorem~\ref{inversion} improves the obvious bound asymptotically in $n/2$ times.
\end{remark}

\begin{lemma}\label{cut}
Consider $n\ge 3$. For a family  $w_1,\dots,w_n \in Sym(l)$ there exists $1\le j\le n$ such that the family $\{w_1,\dots,w_n\}\setminus \{w_j\}$ has at most of 
$$\frac{2l^{n-1}}{n!}+O(l^{n-2})$$
\noindent inversions.
\end{lemma}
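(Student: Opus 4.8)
The plan is to prove the sharper averaged inequality
$$\sum_{j=1}^{n}\nu_j\le 2\binom{l}{n-1},$$
where $\nu_j$ denotes the number of inversions of the sub-family $\{w_1,\dots,w_n\}\setminus\{w_j\}$ (a family of $n-1$ permutations, whose inversions are $(n-1)$-element subsets of $[l]$, cf. Definition~\ref{def:inversion}). Since the minimum of the $\nu_j$ is at most their average, this immediately yields an index $j$ with
$\nu_j\le \frac{2}{n}\binom{l}{n-1}=\frac{2l^{n-1}}{n!}+O(l^{n-2})$, which is exactly the asserted bound.

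First I would rewrite this sum by exchanging the order of summation. For a fixed $(n-1)$-element subset $T\subseteq[l]$ set $v_i:=m_{w_i}(T)\in T$ for every $i\in[n]$, i.e. the $w_i$-minimal element of $T$ in the sense of Definition~\ref{def:inversion}. By definition $T$ contributes to $\nu_j$ precisely when the $n-1$ elements $(v_i)_{i\neq j}$ are pairwise distinct, so
$$\sum_{j=1}^{n}\nu_j=\sum_{T}c(T),\qquad c(T):=\#\{\,j\in[n]:\ (v_i)_{i\neq j}\ \text{are pairwise distinct}\,\},$$
the outer sum ranging over all $(n-1)$-subsets $T$ of $[l]$.

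The key step is the pointwise bound $c(T)\le 2$, obtained from a pigeonhole analysis of the map $i\mapsto v_i$ from $[n]$ to $T$. Since $|T|=n-1<n$, this map has a fiber of size at least $2$. Now $(v_i)_{i\neq j}$ is pairwise distinct iff the restriction $v|_{[n]\setminus\{j\}}$ is injective, hence (as $|[n]\setminus\{j\}|=n-1=|T|$) a bijection onto $T$. Removing the single index $j$ decreases only the fiber through $j$, so injectivity of the restriction forces every fiber not containing $j$ to have size at most $1$ and the fiber through $j$ to have size exactly $2$; thus a valid $j$ can exist only when $v$ has a unique fiber of size $\ge 2$ and that fiber has size exactly $2$, in which case the valid $j$ are precisely the two indices lying in it. Therefore $c(T)\in\{0,2\}$, whence
$$\sum_{j=1}^{n}\nu_j=\sum_{T}c(T)\le 2\binom{l}{n-1}.$$
Combining this with the averaging of the first paragraph and the estimate $\binom{l}{n-1}=\frac{l^{n-1}}{(n-1)!}+O(l^{n-2})$ completes the argument.

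The main obstacle is the combinatorial heart, namely establishing $c(T)\le 2$; once the problem is recast, via the exchange of summation, as counting for each $(n-1)$-subset $T$ the indices $j$ whose removal turns the $n$-tuple of minimizers $(v_i)$ into a bijection onto $T$, the fiber-size dichotomy (either exactly one doubly-covered value, giving two admissible removals, or no admissible removal at all) is elementary. Everything else reduces to averaging and the standard asymptotic estimate of $\binom{l}{n-1}$.
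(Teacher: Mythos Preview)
Your proposal is correct and follows essentially the same approach as the paper's proof: both arguments fix an $(n-1)$-subset $T$, observe via pigeonhole that the map $i\mapsto m_{w_i}(T)$ from $[n]$ to $T$ can be made injective by deleting an index $j$ in at most two ways (so $c(T)\in\{0,2\}$), sum over $T$ to obtain $\sum_{j}\nu_j\le 2\binom{l}{n-1}$, and conclude by averaging. Your fiber analysis is slightly more explicit than the paper's, but the ideas are identical.
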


{\bf Proof of the lemma}. If $1\le i_1,\dots, i_{n-1}\le l$ is an inversion of a family $\{w_1,\dots,w_n\}\setminus \{w_j\}$ then among $n$ integers $m_{w_1}(i_1,\dots,i_{n-1}),\dots, m_{w_n}(i_1,\dots,i_{n-1})$ there are $n-1$ pairwise distinct coinciding with $i_1,\dots,i_{n-1}$. Therefore there exist exactly two integers $1\le j\neq j_0\le n$ such that $\{i_1,\dots, i_{n-1}\}$ is an inversion of the family $\{w_1 ,\dots,w_n\}\setminus \{w_j\}$ and of the family $\{w_1,\dots,w_n\}\setminus \{w_{j_0}\}$ as well. In other words, $m_{w_j}(i_1,\dots, i_{n-1})=m_{w_{j_0}}(i_1,\dots, i_{n-1})$.

This completes the proof of the lemma taking into account that there are 
$$\frac{l^{n-1}}{(n-1)!}+O(l^{n-2})$$
\noindent $(n-1)$-tuples of the form $1\le i_1,\dots, i_{n-1}\le l$ and that there are exactly $n$ subsets of the size $n-1$ of the set $\{w_1,\dots,w_n\}$. $\Box$

{\bf Proof of Theorem~\ref{inversion}}. We proceed by induction on $r$. W.l.o.g. one can assume that $n\ge 3$. Denote $i_j:=w_j^{-1}(1),\ 1\le j\le n$. If $i_1,\dots, i_n$ are not pairwise distinct, say $i_1=i_2$, then there are no inversions which contain $i_1$. Thus, w.l.o.g. one can assume that  $i_1,\dots, i_n$ are pairwise distinct.

There is the unique bijection

$$R:=R_{i_1,\dots,i_n}:\{1,\dots, r-n\}  \xrightarrow[]{\sim} \{1,\dots,r\} \setminus \{i_1,\dots,i_n\}$$
\noindent which preserves the order. For any permutation $w\in Sym(r)$ denote 
$$\overline{w}:= R^{-1}\circ w\circ R_{i_1,\dots,i_n}\in Sym(r-n)$$
\noindent (note that $R^{-1}$ is defined for any triple $\{i_1', \dots, i_n'\}$ in place of $\{i_1, \dots, i_n\}$). Apply Lemma~\ref{cut}  to the family $\{\overline{w_1}, \dots, \overline{w_n}\}$ and suppose for definiteness that the family $\{\overline{w_1}, \dots, \overline{w_{n-1}}\}$ has at most of
\begin{eqnarray}\label{58}
 \frac{2(r-n)^{n-1}}{n!}+O((r-n)^{n-2})   
\end{eqnarray}
\noindent inversions.

Denote by $I(r)$ the maximal number of inversions in $n$-families from $Sym(r)$. The number of inversions $\{i_1',\dots, i_n'\}$ of the family $w_1, \dots, w_n$ such that

\begin{itemize}
    \item $i_n\notin \{i_1',\dots, i_n'\}$ does not exceed $I(r-1)$;
    \item $i_n \in \{i_1',\dots,\ i_n'\},\  \{i_1,\dots, i_{n-1}\} \cap \{i_1',\dots,\ i_n'\}= \emptyset$ is bounded by (\ref{58});
    \item $i_n \in \{i_1',\dots,\ i_n'\},\ \{i_1,\dots, i_{n-1}\} \cap \{i_1',\dots,\ i_n'\} \neq \emptyset$ is less than $O(r^{n-2})$.
\end{itemize}

Thus, we get an inductive inequality 
$$I(r)\le I(r-1)+\frac{2(r-n)^{n-1}}{n!}+O(r^{n-2}),$$
which completes the proof of the theorem.$\Box$


\begin{theorem}\label{upper_improved}
A generic regular system (\ref{4}) has at most of
$$\frac{2(k_1+\cdots +k_n)^n}{n\cdot n!}+O((k_1+\cdots +k_n)^{n-1})$$
\noindent minimal solutions for a fixed $n$ and growing $k_1+\cdots +k_n$. 
\end{theorem}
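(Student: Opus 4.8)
The plan is to set $r=k_1+\cdots+k_n$ and to realize the minimal solutions of a generic regular system (\ref{4}) as inversions of a family of $n$ permutations in $Sym(r)$, so that Theorem~\ref{inversion} applies directly. First I would index the $r$ ``small'' positions $\{(i,j):0\le i<k_j,\ 1\le j\le n\}$ by $[r]$ via a fixed linear order. For each equation index $l\in[n]$, regularity forces the values $A_{ijl}$ to be pairwise distinct over the small positions (cf. (\ref{57})); ranking these positions by increasing $A$-value then defines a permutation $w_l\in Sym(r)$. The family $w_1,\dots,w_n$ is precisely the input to Theorem~\ref{inversion}.

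By Lemma~\ref{unique_extended}, a minimal solution $(S_{u_1},\dots,S_{u_n})$ is determined by its small parts $B_j:=S_{u_j}\cap\{0,\dots,k_j-1\}$, since each $S_{u_j}$ carries a unique large element $q_j\ge k_j$ and the $q_j$ are forced by the $B_j$. As $|S_{u_1}|+\cdots+|S_{u_n}|\le 2n$ and there are exactly $n$ large elements, the total number of small elements $\sum_j|B_j|$ is at most $n$, and I would split the count according to whether this total equals $n$ or is strictly smaller. When $\sum_j|B_j|=n$, the small elements occupy $n$ distinct positions $c_1,\dots,c_n\in[r]$; for each equation $l$ the minimum in (\ref{5}) among the small generators is attained at the position of least $A_{\cdot l}$-value, namely $m_{w_l}(c_1,\dots,c_n)$ in the sense of Definition~\ref{def:inversion}. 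The permutation argument from the proof of Lemma~\ref{unique_extended}, which excludes two distinct permutations realizing the same witness sums by genericity (\ref{61}), forces these $n$ witnesses to be pairwise distinct, i.e. $\{c_1,\dots,c_n\}$ is an inversion of $w_1,\dots,w_n$. Since the solution is recovered from its small part, this yields an injection into the set of inversions, so Theorem~\ref{inversion} bounds the number of such solutions by $\frac{2r^n}{n\cdot n!}+O(r^{n-1})$.

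It then remains to absorb the solutions with $\sum_j|B_j|<n$: there are only $O(r^{n-1})$ small parts of size below $n$, and each determines boundedly many minimal solutions by Lemma~\ref{unique_extended}, so these contribute $O(r^{n-1})$. Summing the two cases yields the claimed bound with $r=k_1+\cdots+k_n$.

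The main obstacle is establishing the correspondence precisely: one must verify that the witness of each equation $l$ is exactly the $w_l$-minimizer among the chosen small positions, and that minimality together with genericity force the inversion (distinctness) condition, so that exactly the genuine inversions arise and the map to inversions is injective. This is the $n$-variable generalization of the case analyses carried out for $n=2$ in Lemmas~\ref{unique} and \ref{rigidity}, and handling all distributions of the $n$ small elements among the variables (the analogues of the types $i)_{uv}$, $i)_{uu}$, $i)_{vv}$) is where the real work lies.
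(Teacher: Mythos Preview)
Your proposal is correct and follows essentially the same approach as the paper: define $w_l\in Sym(k_1+\cdots+k_n)$ by ranking the small positions according to the values in (\ref{57}), show that the $n$ small elements of a full-size minimal solution form an inversion of $w_1,\dots,w_n$, apply Theorem~\ref{inversion}, and absorb the deficient configurations into $O(r^{n-1})$ via Lemma~\ref{unique_extended}. One small correction: the pairwise distinctness of the small witnesses comes from minimality (an unused small element could be dropped), not from the permutation argument on the large witnesses $q_j$ in Lemma~\ref{unique_extended}; the paper makes the same point by citing the $n=2$ inequalities (\ref{13}), (\ref{21}).
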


\begin{proof}
    We define the following permutations $w_1, \dots, w_n \in Sym(k_1+\cdots+k_n)$. The permutation $w_l, 1\le l\le n$ corresponds to the sequence (\ref{57})
in the increasing order. In other words, $w_l(j)$ equals the place in the increasing order of $j$-th element of the  sequence (\ref{57}). Note that the elements of the sequence (\ref{57}) are pairwise distinct because of regularity of the system (\ref{4}).

Observe that for any minimal solution of a configuration $(S_{u_1}, \dots, S_{u_n})$ such that 
\begin{equation}\label{59}
 \star \in S_{u_1} \cap \cdots \cap S_{u_n}, \sum_{j=1}^n|S_{u_j}|=2n, \{i_1, \dots, i_n\}= S_{u_1} \cup \cdots \cup S_{u_n}  \setminus \{\star\}   
\end{equation}
it holds that $\{i_1, \dots,i_n\}$ is an inversion of the family of permutations $w_1, \dots, w_n$  (cf. (\ref{13}), (\ref{21})).

Thus, the number of minimal solutions of a configuration (\ref{59}) is bounded by
$$\frac{2(k_1+\cdots+k_n)^n}{n\cdot n!} + O((k_1+\cdots+k_n)^{n-1})$$ 
due to Theorem~\ref{inversion} taking into account Lemma~\ref{unique_extended}. The number of minimal solutions of a configuration $(S_{u_1}, \dots, S_{u_n})$ such that 
$$\star \in S_{u_1} \cap \dots \cap S_{u_n},\ |S_{u_1}|+\cdots +|S_{u_n}|\le 2n-1$$
\noindent is less than $O((k_1+\cdots +k_n)^{n-1})$. This completes the proof of the theorem.
\end{proof}

\begin{remark}
i) The bound from Theorem~\ref{upper_improved} improves the upper bound (\ref{60}) asymptotically in $n/2$ times.

ii) In case $n=3$ and $k:=k_1=k_2=k_3$ we obtain a lower bound $5k^3/2-O(k^2)$ (Theorem~\ref{lower}) on the number of minimal solutions of a generic regular system (\ref{4}), and the upper bound $3k^3+ O(k^2)$ (Theorem~\ref{upper_improved}).    
\end{remark}

\section*{Acknowledgements}
This work was realized while C. G. was visiting the Vietnam Institute for Advanced Studies in Mathematics (VIASM) during the last three months of 2024, and he wishes to thank the Institution for its hospitality and financial support.

\par\bigskip\noindent
\textsc{CNRS,  Math\'ematique, Universit\'e de Lille, Villeneuve
d'Ascq, 59655, France}
\par\noindent
e-mail address: \verb+dmitry.grigoryev@univ-lille.fr+
\vspace{.3cm}
\par\noindent
\textsc{Centro de Investigaci\'on en Matem\'aticas, A.C., Jalisco S/N, Col. Valenciana CP: 36023 Guanajuato, Gto, M\'exico.
}
\par\noindent
e-mail address: \verb+cristhian.garay@cimat.mx+

\end{document}